\providecommand{\U}[1]{\protect\rule{.1in}{.1in}}
\providecommand{\U}[1]{\protect\rule{.1in}{.1in}}
\theoremstyle{definition}
\newtheorem{definition}{Definition}[section]
\newtheorem{remark}[definition]{Remark}
\newtheorem*{Exm}{Examples}
\theoremstyle{plain}
\newtheorem{theorem}[definition]{Theorem}
\newtheorem{proposition}[definition]{Proposition}
\newtheorem{lemma}[definition]{Lemma}
\newtheorem{corollary}[definition]{Corollary}
\numberwithin{equation}{section}
\begin{document}
\author[S.\,Biagi]{Stefano Biagi}
\author[M.\,Bramanti]{Marco Bramanti}
\address{\textnormal{S.\,Biagi and M.\,Bramanti}: Politecnico di Milano - Dipartimento di
Matematica \\
\indent
Via Bonardi 9, 20133 Milano, Italy}
\email[S.\,Biagi]{stefano.biagi@polimi.it}
\email[M.\,Bramanti]{marco.bramanti@polimi.it}
\subjclass[2010]{35K65, 35K08, 35K15}
\keywords{Homogeneous H\"{o}rmander vector fields; heat kernels; global Gaussian
estimates; scale-invariant Harnack inequality.}

\begin{abstract}
Let $X_{1},...,X_{m}$ be a family of real smooth vector fields defined in
$\mathbb{R}^{n}$, $1$-homogeneous with respect to a nonisotropic family of
dilations and satisfying H\"{o}rmander's rank condition at $0$ (and therefore
at every point of $\mathbb{R}^{n}$). The vector fields are \emph{not} assumed
to be translation invariant with respect to any Lie group structure. Let us
consider the non\-va\-ria\-tio\-nal evolution operator
\[
\mathcal{H}:=\sum_{i,j=1}^{m}a_{i,j}(t,x)X_{i}X_{j}-\partial_{t}
\]
where $(a_{i,j}(t,x))_{i,j=1}^{m}$ is a symmetric uniformly positive $m\times
m$ matrix and the entries $a_{ij}$ are bounded H\"{o}lder continuous functions
on $\mathbb{R}^{1+n}$, with respect to the ``parabolic'' distance induced by
the vector fields. We prove the existence of a global heat kernel
$\Gamma(\cdot;s,y)\in C_{X,\mathrm{loc}}^{2,\alpha}(\mathbb{R}^{1+n}%
\setminus\{(s,y)\})$ for $\mathcal{H}$, such that $\Gamma$ satisfies two-sided
Gaussian bounds and $\partial_{t}\Gamma, X_{i}\Gamma,X_{i}X_{j}\Gamma$ satisfy
upper Gaussian bounds on every strip $[0,T]\times\mathbb{R}^{n}$. We also
prove a scale-invariant parabolic Harnack i\-ne\-qua\-li\-ty for $\mathcal{H}%
$, and a standard Harnack inequality for the corresponding stationary
operator
\[
\mathcal{L}:=\sum_{i,j=1}^{m}a_{i,j}(x)X_{i}X_{j}.
\]
with H\"{o}lder continuous coefficients.

\end{abstract}
\title[Operators structured on homogeneous H\"{o}rmander vector fields]{Non-divergence operators structured on homogeneous H\"{o}rmander vector
fields: heat kernels and global Gaussian bounds}
\date{\today }
\maketitle

\section{Introduction}

Let $X_{1},...,X_{m}$ be a family of real smooth vector fields defined in some
domain $\Omega\subseteq\mathbb{R}^{n}$, satisfying H\"{o}rmander's rank
condition in $\Omega$. We consider the non\-va\-ria\-tio\-nal evolution
operator
\begin{equation}
\mathcal{H}:=\sum_{i,j=1}^{m}a_{i,j}(t,x)X_{i}X_{j}-\partial_{t} \label{H var}%
\end{equation}
where $(a_{i,j}(t,x))_{i,j=1}^{m}$ is a symmetric uniformly positive $m\times
m$ matrix and the entries $a_{ij}$ are bounded H\"{o}lder continuous functions
on $[0,T]\times\Omega$. (Pre\-ci\-se de\-fi\-ni\-tions will be given later).

Motivated by issues arising in the theory of several complex variables,
operators of this kind have been studied by several Authors. Bonfiglioli,
Lanconelli, Uguzzoni, in a series of papers (\cite{BLUpaper}, \cite{BLUpap2},
\cite{BU}, \cite{BU2}, \cite{BoUg}) have carried out the following research
program. Given a set of $1$-homogeneous, left-invariant H\"{o}rmander vector
fields on a Carnot group $\mathbb{G} = (\mathbb{R}^{n},*)$, they have proved
in \cite{BLUpap2} the existence of a global heat kernel $\Gamma$ for
$\mathcal{H}$, satisfying sharp Gaussian bounds, of the form
\begin{equation}
\label{eq.estimBLU}%
\begin{split}
&  \mathbf{c}^{-1}(t-s)^{-Q/2}\exp\Big(-M\frac{\|\xi^{-1}*x\|^{2}}%
{t-s}\Big) \leq\Gamma(t,x;s,y)\\
&  \qquad\qquad\leq\mathbf{c}(t-s)^{-Q/2}\exp\Big(-\frac{\|\xi^{-1}*x\|^{2}%
}{M(t-s)}\Big),
\end{split}
\end{equation}
and analogous upper estimates for the first and second order derivatives of
$\Gamma$ along $X_{1},\ldots,X_{m}$. In \eqref{eq.estimBLU}, $Q$ and
$\|\cdot\|$ are, respectively, the homogeneous dimension and a homogeneous
norm on $\mathbb{G}$. Exploiting these results, they have derived in
\cite{BoUg} a scale invariant parabolic Harnack inequality for $\mathcal{H}$,
which easily implies an analogous standard Harnack inequality for the
corresponding stationary operator
\[
\mathcal{L}:=\sum_{i,j=1}^{m}a_{i,j}(x)X_{i}X_{j}.
\]
In order to build the heat kernel $\Gamma$ for $\mathcal{H}$, the Authors
exploit the \emph{parametrix method}. This requires much preliminary work on
the corresponding constant coefficient operator
\[
\mathcal{H}_{A}:=\sum_{i,j=1}^{m}a_{i,j}X_{i}X_{j}-\partial_{t}
\]
where $A=\left\{  a_{ij}\right\}  $ belongs to the class $\mathcal{M}%
_{\Lambda}$ of constant symmetric matrices satisfying
\begin{equation}
\label{ellipticity}\frac{1}{\Lambda}|\xi|^{2}\leq\langle A\xi,\xi\rangle
\leq\Lambda|\xi|^{2}\text{ \ for every }\xi\in\mathbb{R}^{m}\text{.}%
\end{equation}
Namely, in \cite{BLUpaper} the Authors have proved sharp Gaussian estimates
for the heat kernel $\Gamma_{A}$ of $\mathcal{H}_{A}$, where the bounds depend
on $A$ only through the number $\Lambda$. In turn, the desired uniformity of
the estimates relies on a careful analysis of a diffeomorphism turning the
operator $\mathcal{H}_{A}$ into $\mathcal{H}_{I}$ (with $I$ the identity
matrix), carried out in \cite{BU} and also exploiting the results of
\cite{BU2}.

Bramanti, Brandolini, Lanconelli, Uguzzoni, in \cite{BBLUMemoir}, have studied
heat-type operators $\mathcal{H}$ with\-out assuming the existence of an
underlying Carnot group. In other words, the vector fields $X_{1},\ldots
,X_{m}$ are now a general family of H\"{o}rmander vector fields. On the other
hand, the operator $\mathcal{H}$ is assumed to coincide with the classical
heat operator outside a large compact set. Under these assumptions, the
Authors have implemented the same general research program described above:
after establishing uniform Gaussian estimates for operators $\mathcal{H}_{A}$
corresponding to a contant matrix $A$, by the parametrix method a global
Gaussian kernel is built for $\mathcal{H}$, and sharp Gaussian bounds are
established, of the kind
\[%
\begin{split}
&  \mathbf{c}^{-1}\frac{1}{|B_{X}(x,\sqrt{t-s})|} \exp\Big(-M\frac{d_{X}%
(x,y)}{t-s}\Big) \leq\Gamma(t,x;s,y)\\
&  \qquad\qquad\leq\mathbf{c}\frac{1}{|B_{X}(x,\sqrt{t-s})|}\exp
\Big(-\frac{d_{X}(x,y)}{M(t-s)}\Big),
\end{split}
\]
with analogous upper estimates for the $X$-derivatives of $\Gamma$. Here,
$d_{X}$ is the control distance induced by $X_{1},\ldots,X_{m}$ and
$B_{X}(x,r)$ is the corresponding ball. As a con\-se\-quence, scale invariant
Harnack inequalities for $\mathcal{H}$ and $\mathcal{L}$ are derived. The
results in \cite{BBLUMemoir} exploit, in particular, both some of the
corresponding results proved on Carnot groups in the aforementioned papers by
Bonfiglioli, Lanconelli, Uguzzoni, and Schauder-type estimates for
$\mathcal{H}$ proved by Bramanti, Brandolini in \cite{BraBra}.

Since the vector fields considered in \cite{BBLUMemoir} are not assumed to be
homogeneous nor left invariant with respect to an underlying group structure,
under this respect that theory is more general than the one developed by
Bonfiglioli, Lanconelli, Uguzzoni. On the other hand, the requirement that
$\mathcal{H}$ coincides with the heat operator outside a compact set means
that the results proved in \cite{BBLUMemoir} are actually \emph{local
results}, although they are better formulated with the language of a globally
defined operator. This fact is consistent with a quite pervasive dichotomy in
the theory of H\"{o}rmander operators: global results in the setting of Carnot
groups \emph{versus} local results in the general setting.

The aim of this paper is to establish the same set of results (i.e.: existence
of a global heat kernel $\Gamma$ for $\mathcal{H}$, sharp Gaussian bounds on
$\Gamma$, scale invariant Harnack inequalities for $\mathcal{H}$ and
$\mathcal{L}$), in a \emph{global }version, for a family of H\"{o}rmander
vector fields more general than the generators of a Carnot group. A convenient
setting is that of smooth vector fields $X_{1},\ldots,X_{m}$ in $\mathbb{R}%
^{n}$ satisfying the next assumptions:

\begin{description}
\item[(H.1)] $X_{1},\ldots,X_{m}$ are linearly independent (as vector fields)
and ho\-mo\-ge\-neous of degree $1$ with respect to a family of non-isotropic
dilations $\{\delta_{\lambda}\}_{\lambda>0}$ in $\mathbb{R}^{n}$ of the
following form
\begin{equation}
\label{eq.defidela}\delta_{\lambda}(x):=(\lambda^{\sigma_{1}}x_{1}%
,\ldots,\lambda^{\sigma_{n}}x_{n}),\quad%
\begin{array}
[c]{c}%
\text{where $\sigma_{1},\ldots,\sigma_{n}\in\mathbb{N}$ and}\\
1=\sigma_{1}\leq\ldots\leq\sigma_{n}%
\end{array}
\end{equation}
We define the $\delta_{\lambda}$-homogeneous dimension of $(\mathbb{R}%
^{n},\delta_{\lambda})$ as
\begin{equation}
\textstyle q:=\sum_{k=1}^{n}\sigma_{k}\geq n. \label{eq.defqhom}%
\end{equation}

\item[(H.2)] $X_{1},\ldots,X_{m}$ satisfy H\"{o}rmander's condition at $x=0$,
that is,
\[
\mathrm{dim}\big\{  Y(0):\,Y\in\mathrm{Lie}(X_{1},\ldots,X_{m})\big\}  =n,
\]
where $\mathrm{Lie}(X_{1},\ldots,X_{m})$ is the \emph{Lie algebra generated by
$X_{1},\ldots,X_{m}$}.
\end{description}

Some  examples of vector fields of this kind are the following.

\begin{Exm}
(1)\,\,In $\mathbb{R}^{2}$:
\[
\text{$X_{1} = \partial_{x_{1}}$ and $X_{2} = x_{1}^{k}\,\partial_{x_{2}}$}%
\]
(with $k\in\mathbb{N}$), which are $1$-homogeneous with respect to
$\delta_{\lambda}(x) = (\lambda x_{1},\lambda^{k+1}x_{2}).$ \medskip

\noindent(2)\,\,In $\mathbb{R}^{n}$:
\[
\text{$X_{1} = \partial_{x_{1}}$ and $X_{2} = x_{1}\partial_{x_{2}} +
x_{2}\partial_{x_{3}}+\ldots+x_{n-1}\partial_{x_{n}}$},
\]
with $\delta_{\lambda}(x) =(\lambda x_{1},\lambda^{2}x_{2},\cdots,\lambda
^{n}x_{n}).$ \medskip

\noindent(3)\,\,In $\mathbb{R}^{3}$:
\[
\text{ $X_{1} = \partial_{x_{1}}$ and $X_{2} = x_{1}\,\partial_{x_{2}}%
+x_{1}^{2}\,\partial_{x_{3}}$},
\]
with $\delta_{\lambda}(x) = (\lambda x_{1},\lambda^{2} x_{2},\lambda^{3}
x_{3})$. \medskip

\noindent(4)\,\,In $\mathbb{R}^{n}$:
\[
\text{$X_{1} = \partial_{x_{1}}$ and $X_{2} = x_{1}\,\partial_{x_{2}}%
+x_{1}^{2}\,\partial_{x_{3}}+ \cdots+x_{1}^{n-1}\,\partial_{x_{n}}$},
\]
with the same dilations as in (2).
\end{Exm}

Under these as\-sump\-tions, H\"{o}rmander operators of the kind
\[
L=\sum_{i=1}^{m}X_{i}^{2}
\]
or their evolutive counterpart
\[
H =\partial_{t}-\sum_{i=1}^{m}X_{i}^{2}
\]
have been studied in recent years in a series of papers. Biagi, Bonfiglioli in
\cite{BiagiBonfiglioliLast} and \cite{BBHeat} have proved the existence of a
homogeneous global fundamental solution for $L$ and $H$, respectively. Their
technique consists in constructing a higher dimensional Carnot group whose
generators $\widehat{X}_{1},...\widehat{X}_{m}$ project onto $X_{1},...X_{m}$.
The cor\-re\-spon\-ding \textquotedblleft lifted\textquotedblright\ operators
$\widehat{L},\widehat{H}$, by known results possess a homogeneous fundamental
solution; integrating these kernels with respect to the added variables, the
Authors get, and are able to estimate, global homogeneous fundamental
solutions for $L$ and $H$. More explicit bounds on these fundamental
solutions, in terms of the distance induced by the vector fields and the
volume of the corresponding balls, have been proved in \cite{BiBoBra1} and
\cite{BiBraGauss}, respectively.

By combining this lifting technique with the bounds proved in
\cite{BiBraGauss}, we first establish \emph{sharp uniform} Gaussian bounds for
the heat kernels cor\-re\-spon\-ding to operators \eqref{H var} with constant
$a_{ij}$ (see Theorem \ref{thm.mainParabolicCase}); next, we show that the
pa\-ra\-me\-trix method is applicable as in \cite{BBLUMemoir}, getting the
existence and sharp Gaussian bounds for the heat kernel of operators
\eqref{H var} with \emph{H\"{o}lder continuous coefficients}. Before giving
the precise statement of this result, which is one of the main results of this
paper, it is convenient to fix the following: \vspace*{0.1cm}

\begin{definition}
Let $\Omega\subseteq\mathbb{R}^{1+n}=\mathbb{R}_{t}\times\mathbb{R}_{x}^{n}$
be an open set, and $\alpha\in(0,1)$. We define ${C}_{X}^{\alpha}(\Omega)$ as
the space of functions $u:\Omega\rightarrow\mathbb{R}$ such that
\[
\Vert u\Vert_{\alpha,\Omega}:=\sup_{\Omega}|u|+\sup_{\begin{subarray}{c} 
 (t,x),(s,y)\in
\Omega \\
 (t,x)\neq(s,y)
 \end{subarray}}\frac{|u(t,x)-u(s,y)|}{d_{X}(x,y)^{\alpha
}+|t-s|^{\alpha/2}}<\infty
\]
where $d_{X}$ is the Carnot-Carath\'{e}odory distance associated with
\[
X:=\{X_{1},\ldots,X_{m}\}
\]
(see Definition \ref{def.CCdistance} in Section \ref{sec.prelim}).
Accordingly, we define ${C}_{X}^{2,\alpha}(\Omega)$ as the space of
fun\-ctions $u:\Omega\rightarrow\mathbb{R}$ such that
\[
\text{$u,\,X_{i}u,\,X_{i}X_{j}u$ and $\partial_{t}u\in{C}_{X}^{\alpha}%
(\Omega)$},
\]
where all the $X$-derivatives exist in the \emph{intrinsic sense}. Finally, we
define ${C}_{X,\mathrm{loc}}^{2,\alpha}(\Omega)$ as the space of functions
$u:\Omega\rightarrow\mathbb{R}$ such that
\[
\text{$u|_{V}\in{C}_{X}^{2,\alpha}(V)$ for every open set $V\Subset\Omega$}.
\]

\end{definition}

With the above definition at hand, we can now state the announced result
providing \emph{existence and properties} of the global heat kernel of
$\mathcal{H}$.

\begin{theorem}
[Heat kernel for $\mathcal{H}$]\label{thm.thmVariableCoeff} Let $X_{1}%
,\ldots,X_{m}\ $be a family of linearly in\-de\-pen\-dent, smooth vector
fields in $\mathbb{R}^{n}$, ho\-mo\-ge\-neous of degree $1$ with respect to a
family of non-isotropic dilations $\{\delta_{\lambda}\}_{\lambda>0}$ of the
form \eqref{eq.defidela}. Assume that $X_{1},\ldots,X_{m}$ satisfy
H\"{o}rmander's rank condition at $0$ \emph{(}and therefore at every point of
$\mathbb{R}^{n}$, as will be explained later\emph{)}. Moreover, let
\[
A(t,x)=(a_{i,j}(t,x))_{i,j=1}^{m}%
\]
be a symmetric $m\times m$ matrix of functions such that:

\begin{itemize}
\item[(i)] $a_{i,j}\in{C}_{X}^{\alpha}(\mathbb{R}^{1+n})$ for every
$i,j=1,\ldots,m$;

\item[(ii)] the following \emph{uniform ellipticity condition holds}: there
exists $\Lambda>1$ s.t.
\[
\frac{1}{\Lambda}|\xi|^{2}\leq\langle A(t,x)\xi,\xi\rangle\leq\Lambda|\xi
|^{2}\qquad\text{for every $\xi\in\mathbb{R}^{m}$},(t,x)\in\mathbb{R}^{1+n}.
\]

\end{itemize}

Let $\mathcal{H}$ be as in \eqref{H var}. Then, there exists a function
\emph{(}\textquotedblleft heat kernel\textquotedblright\ for $\mathcal{H}%
$\emph{)}
\[
\Gamma:\mathbb{R}^{1+n}\times\mathbb{R}^{1+n}\rightarrow\mathbb{R}%
,\qquad\Gamma=\Gamma(t,x;s,y),
\]
which satisfies the properties listed below.

\begin{enumerate}
\item $\Gamma$ is continuous out of the diagonal of $\mathbb{R}^{1+n}%
\times\mathbb{R}^{1+n}$.

\item $\Gamma(t,x;s,y)$ is non-negative, and it vanishes for $t\leq s$.

\item For every fixed $(s,y)\in\mathbb{R}^{1+n}$ we have
\begin{align*}
&  \Gamma(\cdot;s,y)\in{C}_{X,\mathrm{loc}}^{2,\alpha}(\mathbb{R}%
^{1+n}\setminus\{(s,y)\})\quad\text{and}\\
&  \text{$\mathcal{H}(\Gamma(\cdot;s,y))=0$ on $\mathbb{R}^{1+n}%
\setminus\{(s,y)\}$};
\end{align*}

\item For every $T>0$ there exists a constant $\mathbf{c}=\mathbf{c}_{T}>0$
such that
\begin{align*}
&  \mathrm{(i)}\,\,\mathbf{c}^{-1}\frac{1}{|B_{X}(x,\sqrt{t-s})|}\,\exp\left(
-\mathbf{c}\frac{d_{X}(x,y)^{2}}{t-s}\right)  \leq\Gamma(t,x;s,y)\\
&  \qquad\qquad\leq\mathbf{c}\frac{1}{|B_{X}(x,\sqrt{t-s})|}\,\exp\left(
-\frac{d_{X}(x,y)^{2}}{\mathbf{c}(t-s)}\right)  ;\\[0.2cm]
&  \mathrm{(ii)}\,\,|X_{i}(\Gamma(\cdot;s,y))(t,x)|\leq\mathbf{c}\frac
{1}{\sqrt{t-s}|B_{X}(x,\sqrt{t-s})|}\,\exp\left(  -\frac{d_{X}(x,y)^{2}%
}{\mathbf{c}(t-s)}\right)  ;\\[0.2cm]
&  \mathrm{(iii)}\,\,|X_{i}X_{j}(\Gamma(\cdot;s,y))(t,x)|+|\partial_{t}%
(\Gamma(\cdot;s,y))(t,x)|\\
&  \qquad\qquad\leq\mathbf{c}\frac{1}{\left(  t-s\right)  |B_{X}(x,\sqrt
{t-s})|}\,\exp\left(  -\frac{d_{X}(x,y)^{2}}{\mathbf{c}(t-s)}\right)  ;
\end{align*}
for every $(t,x),(s,y)\in\mathbb{R}^{1+n}$ with $0<t-s\leq T$ and $1\leq
i,j\leq m$.

\item There exists a constant $\delta>0$ such that the following assertion
holds. \vspace*{0.05cm}

Let $\mu\geq0$ and let $T>0$ satisfy
\[
T\mu<\delta.
\]
Moreover, let $f\in{C}_{X}^{\alpha}([0,T]\times\mathbb{R}^{n})$ and let $g\in
C(\mathbb{R}^{n})$ be such that
\[
|f(t,x)|,\,|g(x)|\leq M\exp\left(  \mu\,d_{X}(x,0)^{2}\right)  ,
\]
for all $(t,x)\in\lbrack0,T]\times\mathbb{R}^{n}$ and for some $M>0$. Then,
the function
\begin{align*}
u(t,x):  &  =\int_{\mathbb{R}^{n}}\Gamma(t,x;0,y)g(y)\,dy\\
&  \qquad+\int_{[0,t]\times\mathbb{R}^{n}}\Gamma(t,x;s,y)f(s,y)\,dsdy
\end{align*}
is well-defined on $[0,T]\times\mathbb{R}^{n}$, and enjoys the following
properties: \medskip

\emph{(i)}\thinspace\thinspace$u\in{C}_{X,\mathrm{loc}}^{2,\alpha}%
((0,T)\times\mathbb{R}^{n})\cap C([0,T]\times\mathbb{R}^{n})$; \vspace*{0.1cm}

\emph{(ii)}\thinspace\thinspace$u$ solves the Cauchy problem
\[%
\begin{cases}
\mathcal{H}u=f & \text{in $(0,T)\times\mathbb{R}^{n}$},\\
u(0,\cdot)=g & \text{in $\mathbb{R}^{n}$}.
\end{cases}
\]

\item The following reproduction formula holds
\[
\Gamma(t,x;s,y)=\int_{\mathbb{R}^{n}}\Gamma(t,x;\tau,\xi)\Gamma(\tau
,\xi;s,y)\,d\xi,
\]
for every $x,y\in\mathbb{R}^{n}$ and $t>\tau>s$.

\item Suppose, in addition, that the functions $a_{i,j}$ are smooth on
$\mathbb{R}^{1+n}$. Then, the operator $\mathcal{H}$ is $C^{\infty}%
$-hypoelliptic in $\mathbb{R}^{1+n}$, and
\[
\mathcal{H}(\Gamma(\cdot;s,y))=-\mathrm{\delta}_{(s,y)}\quad\text{in
$\mathcal{D}^{\prime}(\mathbb{R}^{1+n})$},
\]
where $\mathrm{\delta}_{(s,y)}$ denotes the Dirac delta centered at $(s,y)$.
\end{enumerate}
\end{theorem}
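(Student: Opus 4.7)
The plan is to apply the classical \emph{Levi parametrix method} along the lines of \cite{BBLUMemoir}, but now in a genuinely global setting. The crucial input that makes the global version possible is Theorem \ref{thm.mainParabolicCase}, which provides \emph{uniform} Gaussian bounds (and upper bounds on $X$-derivatives) for the heat kernels $\Gamma_A$ of the constant-coefficient operators $\mathcal{H}_A$, with constants depending on $A$ only through the ellipticity constant $\Lambda$. The starting point is to freeze the coefficient matrix at the base point $(s,y)$ and take as parametrix
\[
Z(t,x;s,y):=\Gamma_{A(s,y)}(t,x;s,y),
\]
so that $Z$ and its $X$-derivatives satisfy Gaussian estimates uniformly in $(s,y)$.

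I would then look for the heat kernel in the form $\Gamma=Z+Z\star\Phi$, where $\star$ denotes parabolic time-space convolution on the strip $s<\tau<t$, and $\Phi$ solves the Volterra equation $\Phi=K+K\star\Phi$ with
\[
K(t,x;s,y):=\bigl(\mathcal{H}_{A(t,x)}-\mathcal{H}_{A(s,y)}\bigr)Z(t,x;s,y).
\]
The point is that the factor $A(t,x)-A(s,y)$, thanks to assumption (i), supplies an extra H\"older power $d_X(x,y)^{\alpha}+|t-s|^{\alpha/2}$ which strictly improves the singularity of $X_iX_jZ$, making $K$ integrable against its iterates. The series $\Phi=\sum_{k\ge 1}K^{\star k}$ then converges with Gaussian bounds, which transfer to $Z\star\Phi$ and hence to $\Gamma$. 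The lower bound in (4)(i) is obtained in the usual way: one first shows it on a short time interval, where $Z$ dominates $Z\star\Phi$, and then propagates it to every $0<t-s\le T$ by means of the reproduction formula (6).

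The remaining items follow as standard consequences of the series representation. Continuity off the diagonal (1) and non-negativity with vanishing for $t\le s$ (2) come from the corresponding properties of $Z$ together with locally uniform convergence of the series; (3) follows after showing that $\Phi$ is H\"older continuous in its first two variables and then invoking the Schauder estimates of \cite{BraBra} applied to $\mathcal{H}$; the derivative bounds (4)(ii)-(iii) are obtained by differentiating the Duhamel representation of $\Gamma$ and estimating the resulting convolutions against the uniform bounds on $X_iZ$, $X_iX_jZ$ and $\partial_tZ$. Property (5) is the standard Levi-type argument for the Cauchy problem, where the smallness condition $T\mu<\delta$ is exactly what is needed to make the Gaussian integrals against data of Gaussian growth convergent; (6) is deduced from uniqueness in the Cauchy problem applied to $\Gamma(\tau,\,\cdot\,;s,y)$; finally (7) combines $\mathcal{H}\Gamma=0$ off the diagonal, H\"ormander's hypoellipticity theorem for smooth coefficients, and the delta-like behaviour of $\Gamma$ as $t\downarrow s$ encoded in (5).

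The main obstacle, in my view, is not the formal Levi machinery but ensuring uniformity of \emph{all} estimates in the base point $(s,y)\in\mathbb{R}^{1+n}$. In \cite{BBLUMemoir} this difficulty is bypassed by assuming that $\mathcal{H}$ coincides with the Euclidean heat operator outside a compact set, which effectively localises the problem. In the present setting that safety net is gone, and its role must be played by the combination of $1$-homogeneity of the $X_i$ with respect to $\delta_\lambda$ and the globally uniform bounds of Theorem \ref{thm.mainParabolicCase}. Concretely, one must check that the constants entering the Levi iteration depend on $A$ only through $\Lambda$ and on the $a_{ij}$ only through their global H\"older seminorms, and that the doubling of $|B_X(x,r)|$ needed to estimate the iterated convolutions holds at \emph{every} scale and \emph{every} centre --- both of which are natural in the homogeneous setting but must be verified with care.
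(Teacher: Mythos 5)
Your proposal is correct and follows essentially the same route as the paper: freeze the coefficients at the pole $(s,y)$, take $Z=\Gamma_{A(s,y)}$ as parametrix using the uniform Gaussian bounds of Theorem \ref{thm.mainParabolicCase}, solve the Volterra equation for $\Phi$ by Neumann series, and then run the argument of \cite[Part~II]{BBLUMemoir} verbatim. The paper's contribution here is precisely the verification step you flag at the end: since the ``$\mathcal{H}=$ Euclidean heat operator off a compact set'' trick of \cite{BBLUMemoir} gives Euclidean comparability of $d_X$ which is \emph{false} in the homogeneous case, the paper isolates the weaker metric and Gaussian-integral properties that the Levi iteration actually uses (Propositions \ref{thm.propdXnostre} and \ref{thm.propertiesGdX}, Lemma \ref{lem.sumexpdXhom}) and proves them from $\delta_\lambda$-homogeneity and the global NSW-type volume estimates.
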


As anticipated, the second main result of this paper is a
\emph{scale-invariant Har\-nack inequality} for $\mathcal{H}$,  and it will be
stated in Section \ref{sec.HarnackH}, see Theorem \ref{thm.Harnack}.  The
proof of this result (and that of its stationary counterpart,  see Theorem
\ref{thm.HarnackStat}) can follow an easier path, logically  independent of
the properties of the heat kernel: the Harnack inequality can  be simply
derived from the corresponding result which is known in Carnot  groups, just
by projection, owing to the lifting procedure sketched above. We  note that
this projection technique would not, instead, allow to get a simple  proof of
the existence of a global fundamental solution for \eqref{H var}.

\section{Assumptions, notation and preliminary results}

\label{sec.prelim} Here we explain and discuss in detail the notions and
assumptions involved in the statement of Theorem \ref{thm.thmVariableCoeff}.
To begin with, we point out some easy consequences of as\-sum\-ptions
(H.1)-(H.2) which will be useful in the sequel (for a proof see
\cite{BiagiBonfiglioliLast}). 

\begin{enumerate}

\item H\"{o}rmander's condition holds at every  point $x\in\mathbb{R}^{n}$,
i.e.,
\[
\mathrm{dim}\big\{  Y(x):\,Y\in\mathrm{Lie}(X_{1},\ldots,X_{m}%
)\big\}  =n\qquad\text{ for every $x\in\mathbb{R}^{n}$}.
\]

\item The Lie algebra $\mathrm{Lie}(X_{1},\ldots,X_{m})$ is \emph{nilpotent}
and  \emph{stratified}, that is
\begin{equation}
\label{eq.LieXoplus}\mathrm{Lie}(X_{1},\ldots,X_{m})=\bigoplus_{i=1}%
^{\sigma_{n}}V_{i},
\end{equation}
where $V_{1}=\mathrm{span}\{X_{1},\ldots,X_{m}\}$ and $V_{i}:=[V_{1}%
,V_{i-1}]$  (for $i\geq2$). Fur\-ther\-mo\-re, for every $i=1,\ldots
,\sigma_{n}$ one also  has
\begin{equation}
\label{eq.Viintrinsic}V_{i}=\big\{ Y\in\mathrm{Lie}(X_{1},\ldots
,X_{m}):\,\text{$Y$ is $\delta_{\lambda}$-homogeneous of degree $i$}\big\} .
\end{equation}
As a consequence, since it is finitely-generated, $\mathrm{Lie}(X_{1}%
,\ldots,X_{m})$ has  \emph{finite dimension}, say $N$. Moreover, using
assumption (H.2), one gets
\begin{equation}
\label{eq.dimNgeqn}N=\mathrm{Lie}(X_{1},\ldots,X_{m})\geq n.
\end{equation}

\end{enumerate}

From now on, we will adopt the simplified notation
\[
\mathfrak{a}:=\mathrm{Lie}(X_{1},\ldots,X_{m}),
\]
so that $N:=\mathrm{dim}(\mathfrak{a})$. On account of \eqref{eq.dimNgeqn},
only two cases can occur:

\begin{itemize}
\item[(a)] $N=n$. In this case, by taking into account the $\delta_{\lambda}%
$-homogeneity of the $X_{i}$'s, we are entitled to invoke the results in
\cite{BBConstruction, BonfLanc}: there exists an o\-pe\-ra\-tion $\circ$ on
$\mathbb{R}^{n}$ such that $\mathbb{F}:=(\mathbb{R}^{n},\circ,\delta_{\lambda
})$ is a Carnot group, and
\[
\mathrm{Lie}(\mathbb{F})=\mathfrak{a}.
\]
As a consequence, $X_{1},\ldots,X_{m}$ are homogeneous and left-invariant on
$\mathbb{F}$, and thus the results in this paper are well-known (see
\cite[Thm.\,2.5]{BLUpaper}). \vspace*{0.15cm}

\item[(b)] $N>n$. In this case, again by exploiting the results contained in
\cite{BBConstruction}, we see that \emph{there cannot exist} any Lie-group
structure in $\mathbb{R}^{n}$ with respect to which $X_{1},\ldots,X_{m}$ are
left-invariant. In particular, \cite[Thm.\,2.5]{BLUpaper} \emph{does not
apply} in this case, and analogous results are not known.
\end{itemize}

In view of the preceding discussion, it is not restrictive to assume the
following `dimensional' hypothesis (in addition to (H.1) and (H.2)).

\begin{description}
\item[(H.3)] We suppose that
\[
N=\mathrm{dim}(\mathfrak{a})>n,
\]
and we define
\[
p:=N-n\geq1.
\]

\end{description}

\begin{remark}
\label{Remark H3} All the results of this paper will be stated assuming only
(H.1) and (H.2); however, their proofs will be given assuming also (H.3). The
reason is that, if (H.1) and (H.2) hold but (H.3) is not satisfied, that is,
$N = n$, all these results are already known from \cite{BLUpap2} and
\cite{BoUg}, as explained in the Introduction.
\end{remark}

We will denote points $z\in\mathbb{R}^{N}$ by%
\begin{equation}
\label{eq.notationz}z=(x,\xi),\qquad\text{with $x\in\mathbb{R}^{n}$ and
$\xi\in\mathbb{R}^{p}$}.
\end{equation}
Under assumption (H.3) it is proved in \cite{BiagiBonfiglioliLast} that the
$X_{i}$'s can be lifted (in a suitable sense) to \emph{left-invariant} vector
fields `living' on a higher-dimensional Carnot group:

\begin{theorem}
[{{see \cite[Thm.\,3.2]{BiagiBonfiglioliLast}}}]\label{thm.liftingGroupG}
Assume that $X=\{X_{1},\ldots,X_{m}\}\subseteq\mathcal{X}(\mathbb{R}^{n})$
sa\-ti\-sfies \emph{(H.1)}, \emph{(H.2)}, \emph{(H.3)}. \vspace*{0.05cm}

Then, there exists a homogeneous Carnot group $\mathbb{G}=(\mathbb{R}^{N}%
,\ast,D_{\lambda})$, nilpotent of step $r=\sigma_{n}$ and with $m$ generators,
such that
\[
\text{$\mathrm{Lie}(\mathbb{G})$ is isomorphic to $\mathfrak{a}$}.
\]
Moreover, using the notation in \eqref{eq.notationz}, the dilation
$D_{\lambda}$ takes the `lifted form'
\begin{equation}
\label{eq.Dlambdalifted}D_{\lambda}(x,\xi)=(\delta_{\lambda}(x),\lambda
^{s_{1}}\xi_{1},\ldots,\lambda^{s_{p}}\xi_{p}), \quad%
\begin{array}
[c]{c}%
\text{where $s_{1},\ldots,s_{p}\in\mathbb{N}$ and}\\
s_{1}\leq\ldots\leq s_{p}<\sigma_{n}.
\end{array}
\end{equation}
As a consequence, the homogeneous dimension $Q$ of $\mathbb{G}$ is given by
\begin{equation}
\label{eq.homogQG}\textstyle Q:=\sum_{i=1}^{n}\sigma_{i}+\sum_{i=1}^{p}%
s_{i}>q.
\end{equation}
Finally, there exists a system $\widehat{X}=\{\widehat{X}_{1},\ldots,
\widehat{X}_{m}\}$ of Lie-generators of $\mathrm{Lie}(\mathbb{G})$ such that
$\widehat{X}_{i}$ is a lifting of $X_{i}$ for every $i=1,\ldots,m$; this means
that
\begin{equation}
\label{lifting}\widehat{X}_{i}(x,\xi)=X_{i}(x)+R_{i}(x,\xi),
\end{equation}
where $R_{i}(x,\xi)$ is a smooth vector field operating only in the variable
$\xi\in\mathbb{R}^{p}$, with coefficients possibly depending on $(x,\xi)$. In
particular, the $\widehat{X}_{i}$'s are $D_{\lambda}$-ho\-mo\-ge\-neous of
degree $1$ \emph{(}for every $i=1,\ldots,m$\emph{)}.
\end{theorem}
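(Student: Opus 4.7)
The strategy is to realize $G$ as the unique connected, simply connected nilpotent Lie group with Lie algebra isomorphic to $\mathfrak{a}$, and then to choose global coordinates that simultaneously encode the stratification of $\mathfrak{a}$ and the splitting $\mathbb{R}^{N}=\mathbb{R}^{n}\times\mathbb{R}^{p}$. Since $\mathfrak{a}$ is a finite-dimensional, nilpotent, stratified Lie algebra of step $\sigma_{n}$, the standard Lie-theoretic construction produces a connected, simply connected Lie group $G$ with $\mathrm{Lie}(G)\cong\mathfrak{a}$; nilpotency makes $\exp\colon\mathfrak{a}\to G$ a global diffeomorphism, so $G$ is identified with $\mathbb{R}^{N}$ and the Baker--Campbell--Hausdorff formula gives a polynomial group law $\ast$. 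The grading automorphism that multiplies $V_{i}$ by $\lambda^{i}$ respects brackets and therefore descends to a group automorphism $D_{\lambda}$, making $(\mathbb{R}^{N},\ast,D_{\lambda})$ a homogeneous Carnot group of step $\sigma_{n}$ with $m$ generators.

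To achieve the splitting required in \eqref{eq.Dlambdalifted}, I would use the evaluation map $\mathrm{ev}_{0}\colon\mathfrak{a}\to\mathbb{R}^{n}$, $Y\mapsto Y(0)$, which is surjective by H\"{o}rmander's condition at $0$ and has kernel $K$ of dimension $p=N-n$. Choosing a \emph{graded} complement $W$ of $K$, so that $W=\bigoplus_{i}(W\cap V_{i})$, the restriction $\mathrm{ev}_{0}|_{W}\colon W\to\mathbb{R}^{n}$ is an isomorphism matching the weights $\sigma_{1},\dots,\sigma_{n}$; taking bases of $W$ and then of $K$ adapted to the stratification yields exponential coordinates of the second kind $G\cong\mathbb{R}^{n}\times\mathbb{R}^{p}$ in which $D_{\lambda}$ has exactly the claimed lifted form. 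The strict inequality $s_{p}<\sigma_{n}$ follows because the weight-$\sigma_{n}$ part of $\mathfrak{a}$ must contribute to $W$ in order to realize the $x_{n}$-coordinate through $\mathrm{ev}_{0}$, so no exponent on the $\xi$-side can reach $\sigma_{n}$; the dimensional identity $Q=\sum_{i}\sigma_{i}+\sum_{i}s_{i}>q$ is then immediate.

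Finally, I define $\widehat{X}_{i}$ as the left-invariant vector field on $G$ whose value at the identity equals the image of $X_{i}\in V_{1}$ under $\mathfrak{a}\cong\mathrm{Lie}(G)$; by construction $\widehat{X}_{1},\dots,\widehat{X}_{m}$ Lie-generate $\mathrm{Lie}(G)$ and are $D_{\lambda}$-homogeneous of degree $1$. The lifting identity $\widehat{X}_{i}(x,\xi)=X_{i}(x)+R_{i}(x,\xi)$ with $R_{i}$ acting only on $\xi$ is equivalent to the projection $\pi\colon\mathbb{R}^{n}\times\mathbb{R}^{p}\to\mathbb{R}^{n}$ intertwining $\widehat{X}_{i}$ with $X_{i}$; this I would verify by expanding the left-invariant flow through a point $\exp(w+k)$ (with $w\in W$, $k\in K$) via BCH in the chosen coordinates and reading off the first-order $W$-component, which by the graded isomorphism $\mathrm{ev}_{0}|_{W}$ matches the action of $X_{i}$ on the $x$-variable while extra BCH corrections either vanish on $W$ or belong to $K$. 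The chief obstacle is precisely this linear-algebraic and BCH bookkeeping: ensuring that a graded complement $W$ can always be chosen and that, in the induced coordinates, the abstract group law reproduces on the $W$-slot the concrete action of $X_{i}$ on $\mathbb{R}^{n}$.
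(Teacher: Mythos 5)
Your setup — realizing $G$ as the simply connected nilpotent group with Lie algebra $\mathfrak{a}$, the polynomial BCH group law, and the grading automorphism $D_{\lambda}$ — is correct and is in fact how the construction reviewed in Remark~\ref{rem.reviewConst} begins. The observation that $\mathrm{ev}_{0}\colon\mathfrak{a}\to\mathbb{R}^{n}$ maps $V_{i}$ into $\mathrm{span}\{\partial_{x_{k}}:\sigma_{k}=i\}$ (by $\delta_{\lambda}$-homogeneity of the coefficients), so that $K=\ker(\mathrm{ev}_{0})$ is a graded subspace and admits a graded complement $W$ with $\mathrm{ev}_{0}|_{W}$ weight-preserving, is also correct; this cleanly accounts for the form \eqref{eq.Dlambdalifted} and for $Q>q$.

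The gap is in the last paragraph, precisely at the step you call the ``chief obstacle.'' You propose to take exponential coordinates built on the splitting $\mathfrak{a}=W\oplus K$ (you write $\exp(w+k)$, which is canonical coordinates of the first kind) and to argue that the ``extra BCH corrections either vanish on $W$ or belong to $K$.'' This is false, and the lifting identity \eqref{lifting} genuinely fails in these coordinates. Take the paper's Example~(1) with $k=1$: $X_{1}=\partial_{x_{1}}$, $X_{2}=x_{1}\partial_{x_{2}}$, $\delta_{\lambda}(x)=(\lambda x_{1},\lambda^{2}x_{2})$. Here $\mathfrak{a}=\mathrm{span}\{X_{1},X_{2},E_{3}\}$ with $E_{3}=[X_{1},X_{2}]=\partial_{x_{2}}$, so $N=3$, $K=\mathrm{span}\{X_{2}\}$, and the (unique) graded complement is $W=\mathrm{span}\{X_{1},E_{3}\}$. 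Writing $(x_{1},x_{2},\xi_{1})\leftrightarrow\exp\bigl(x_{1}X_{1}+x_{2}E_{3}+\xi_{1}X_{2}\bigr)$, the left-invariant field with value $X_{1}$ at the identity is $\widehat{X}_{1}=\partial_{x_{1}}-\tfrac{1}{2}\xi_{1}\partial_{x_{2}}$, and the one with value $X_{2}$ is $\widehat{X}_{2}=\tfrac{1}{2}x_{1}\partial_{x_{2}}+\partial_{\xi_{1}}$. Neither has the form $X_{i}(x)+R_{i}(x,\xi)$ with $R_{i}$ acting only on $\xi$: the BCH correction $-\tfrac{1}{2}\xi_{1}\partial_{x_{2}}$ lands squarely in the $W$-slot (it is a $W$-component depending on $\xi$), and $\widehat{X}_{2}$ projects to $\tfrac12 X_{2}$, not $X_{2}$.

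This is exactly why the construction in \cite{BiagiBonfiglioliLast}, as summarized in Remark~\ref{rem.reviewConst}, does not stop at the choice of an adapted basis but introduces a nontrivial diffeomorphism $T\in C^{\infty}(\mathbb{R}^{N};\mathbb{R}^{N})$ which transforms the ``abstract'' left-invariant fields $J_{i}$ into the lifted fields $Z_{i}$ with the required triangular structure $Z_{i}(x,\xi)=E_{i}(x)+W_{i}(x,\xi)$. That $T$ is the content you are missing; it is not a cosmetic change of basis but the analytic heart of the lifting theorem, obtained via a flow/orbit-map argument (the action of $G$ on $\mathbb{R}^{n}$ through the complete homogeneous vector fields in $\mathfrak{a}$). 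A correct version of your ``read off the $W$-component'' idea can be carried out in suitable coordinates of the second kind with the $K$-factor on one specific side — indeed, in the example above, $(x,\xi)\leftrightarrow\exp(\xi_{1}X_{2})\exp(x_{1}X_{1}+x_{2}E_{3})$ does yield $\widehat X_{1}=\partial_{x_{1}}$ and $\widehat X_{2}=x_{1}\partial_{x_{2}}+\partial_{\xi_{1}}$ — but that ordering, and why it works in general, is precisely what has to be proved and is not addressed by your argument; you in fact conflate first- and second-kind coordinates. Finally, the justification of $s_{p}<\sigma_{n}$ is a bit loose as stated; the clean argument is that any $Y\in V_{\sigma_{n}}$ has constant coefficients on the top-weight coordinates and zero coefficients elsewhere, so $Y(0)=0$ forces $Y=0$, whence $K\cap V_{\sigma_{n}}=\{0\}$.
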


\begin{remark}
\label{rem.reviewConst} For a future reference, here we briefly review how the
group $\mathbb{G}$ in Theorem \ref{thm.liftingGroupG} is constructed. For all
the details, we refer to \cite{BiagiBonfiglioliLast}. \vspace*{0.05cm}

First of all, since we have already recognized that $\mathfrak{a}$ is
nilpotent and stratified, it is well-known that $(\mathfrak{a},\diamond
,\Delta_{\lambda})$ is a stratified group, where

\begin{itemize}
\item $\diamond$ is the Baker-Campbell-Hausdorff series on $\mathfrak{a}$
(boiling down to a finite sum, since $\mathfrak{a}$ is nilpotent);
\vspace*{0.1cm}

\item $\Delta_{\lambda}$ is the unique linear map on $\mathfrak{a}$ such that
$\Delta_{\lambda}|_{V_{i}}:=\lambda^{i}\,\mathrm{id}$.
\end{itemize}

Moreover, since $\mathfrak{a}$ has finite dimension $N$, we can fix a basis
\[
\mathcal{E}=\{E_{1},\ldots,E_{N}\}
\]
of $\mathfrak{a}$ (as a vector space), which is \emph{adapted to the
stratification} $\{V_{i}\}_{i=1}^{\sigma_{n}}$ in \eqref{eq.LieXoplus}. This
means that, setting $r:=\sigma_{n}$, $\mathcal{E}$ can be decomposed as
\[
\mathcal{E}=\left\{  E_{1}^{(1)},\ldots E_{N_{1}}^{(1)},\ldots,E_{1}%
^{(r)},\ldots,E_{N_{r}}^{(r)}\right\}  ,
\]
where, for every $i=1,\ldots,r$, we have

\begin{itemize}
\item $N_{i} := \mathrm{dim}(V_{i})$ (so that $N_{1} = m$ and $N_{1}%
+\cdots+N_{r} = N$); \vspace*{0.1cm}

\item $\mathcal{E}_{i}:=\left\{  E_{1}^{(i)},\ldots,E_{N_{i}}^{(i)}\right\}  $
is a basis of $V_{i}$.
\end{itemize}

Using the chosen basis $\mathcal{E}$, we then equip $\mathbb{R}^{N}$ with a
structure of homogeneous Carnot group $\mathbb{A}=(\mathbb{R}^{N}%
,\circ,d_{\lambda})$ by `reading' $\diamond$ and $\Delta_{\lambda}$ in
$\mathcal{E}$-coordinates, i.e.,
\begin{align*}
&  \,\sum_{i=1}^{N}(a\circ b)_{i}\,E_{i}=\left(  \sum_{i=1}^{N}a_{i}%
\,E_{i}\right)  \diamond\left(  \sum_{i=1}^{N}b_{i}\,E_{i}\right)
\qquad(\text{for all $a,b\in\mathbb{R}^{N}$})\\[0.15cm]
&  \,\sum_{i=1}^{N}\left(  d_{\lambda}(a)\right)  _{i}\,E_{i}= \Delta
_{\lambda}\left(  \sum_{i=1}^{N}a_{i}\,E_{i}\right)  \qquad(\text{for all
$a\in\mathbb{R}^{N}$ and $\lambda>0$}).
\end{align*}
For any fixed $i=1,\ldots,m$, we now let $J_{i}$ be the unique left-invariant
vector field on $\mathbb{A}$ coinciding with $\partial_{a_{i}}$ at $a=0$. In
\cite{BiagiBonfiglioliLast} it is proved the existence of a suitable
diffeomorphism $T\in C^{\infty}(\mathbb{R}^{N};\mathbb{R}^{N})$, only
depending on the basis $\mathcal{E}$, which turns the $J_{i}$'s into new
vector fields, say $Z_{1},\ldots,Z_{m}\in\mathcal{X}(\mathbb{R}^{N})$, such
that
\[
Z_{i}(z)=Z_{i}(x,\xi)=E_{i}(x)+W_{i}(x,\xi)\qquad(i=1,\ldots,m).
\]
Here, $W_{1},\ldots,W_{m}$ are smooth vector fields operating only in the
variable $\xi\in\mathbb{R}^{p}$, with coefficients possibly depending on
$(x,\xi)$. On the other hand, since $\mathcal{E}_{1}$ is a basis of
$V_{1}=\mathrm{span}\{X_{1},\ldots,X_{m}\}$, for every $i=1,\ldots,m$ we can
write
\[
\textstyle X_{i}=\sum_{k=1}^{m}c_{k,i}E_{k}\qquad\text{(for a suitable
constants $c_{k,i}\in$}\mathbb{R}\text{)}.
\]
Hence, the set $\widehat{X}$ is obtained by defining
\[
\textstyle \widehat{X}_{i}:=\sum_{k=1}^{m}c_{k,i}Z_{k}\qquad(\text{for all
$i=1,\ldots,m$}).
\]
Finally, the underlying Carnot group $\mathbb{G}$ appearing in the statement
of Theorem \ref{thm.liftingGroupG} can be obtained as the unique Lie group
isomorphic to $\mathbb{A}$ via $T$.
\end{remark}

We close this section by recalling the notion of \emph{control distance}
associated with a H\"{o}rmander set of vector fields; moreover,  we review and
some  properties of this distance  when \emph{homogeneous vector fields} are involved.

\begin{definition}
\label{def.CCdistance} Let $\mathcal{W}=\{W_{1},\ldots,W_{m}\}$ be a family of
smooth vector fields satisfying H\"{o}rmander's rank condition at every point
of $\mathbb{R}^{n}$. Given any couple of points $x,y\in\mathbb{R}^{n}$, we
define
\[
d_{\mathcal{W}}(x,y):=\inf\left\{  T>0:\,\text{there exists $\gamma
\in\mathcal{S}(T)$ with $\gamma(0)=$x and $\gamma(T)=$}y\right\}  ,
\]
where $\mathcal{S}(T)$ is the set of the $W^{1,1}$-curves $\gamma
:[0,T]\rightarrow\mathbb{R}^{n}$ satisfying
\[
\dot{\gamma}(t)=\sum_{j=1}^{m}\alpha_{j}(t)\,W_{j}(\gamma(t)),\quad\text{with
$\sum_{j=1}^{m}|\alpha_{j}(t)|\leq1$}.
\]
The map $d_{\mathcal{W}}$ is called the $\mathcal{W}$-control di\-stan\-ce or
the Carnot-Carath\'{e}odory di\-stan\-ce (CC distance) related to
$\mathcal{W}$. Given any $x\in\mathbb{R}^{n}$ and a\-ny $r>0$, we indicate by
$B_{\mathcal{W}}(x,r)$ the $d_{\mathcal{W}}$-ball
\[
B_{\mathcal{W}}(x,r):=\left\{  y\in\mathbb{R}^{n}:\,d_{\mathcal{W}%
}(x,y)<r\right\}  .
\]

\end{definition}

\begin{remark}
\label{rem.continuitysame} Since we have assumed that $W_{1},\ldots,W_{m}$
sa\-ti\-sfy H\"{o}rmander's rank con\-di\-tion at every point of
$\mathbb{R}^{n}$, it is well-known that $d_{\mathcal{W}}(x,y)$ is finite for
every $x,y\in\mathbb{R}^{n}$; moreover, $d_{\mathcal{W}}$ is a distance,
topologically but not metrically equivalent to the Euclidean one. In
particular, \textquotedblleft con\-ti\-nuo\-us\textquotedblright\ in Euclidean
sense and \textquotedblleft con\-ti\-nuo\-us\textquotedblright\ with respect
to the control distance $d_{\mathcal{W}}$ are the same.
\end{remark}

\begin{remark}
\label{rem.dXhomog} Let $\mathcal{W}=\{W_{1},\ldots,W_{m}\}\subseteq
\mathcal{X}(\mathbb{R}^{n})$ satisfy assumptions (H.1)-(H.2), and let
$d_{\mathcal{W}}$ be the associated control distance. Due to the
$\delta_{\lambda}$-homogeneity of the $W_{i}$'s, it easy to see that the
following properties hold:

\begin{enumerate}
\item $d_{\mathcal{W}}$ is jointly $\delta_{\lambda}$-homogeneous of degree
$1$, that is,
\[
d_{\mathcal{W}}(\delta_{\lambda}(x),\delta_{\lambda}(y))=\lambda
\,d_{\mathcal{W}}(x,y)\quad\text{for all $x,y\in\mathbb{R}^{n}$ and
$\lambda>0$};
\]

\item for every $x\in\mathbb{R}^{n}$ and every $r>0$, one has
\[
\delta_{\lambda}\left(  B_{\mathcal{W}}(x,r)\right)  =B_{\mathcal{W}}%
(\delta_{\lambda}(x),\lambda r).
\]

\end{enumerate}

If, in addition $W_{1},\ldots,W_{m}$ are left-invariant with respect to some
Lie-group stru\-ctu\-re $\mathbb{F}=(\mathbb{R}^{n},\circ)$, the distance
$d_{\mathcal{W}}$ is also \emph{translation-invariant}, that is,
\[
d_{\mathcal{W}}(x,y)=d_{\mathcal{W}}(\alpha\ast x,\alpha\ast y)\quad\text{for
all $x,y,\alpha\in\mathbb{R}^{n}$}.
\]
In particular, the above property implies that
\[
d_{\mathcal{W}}(0,x)=d_{\mathcal{W}}(0,x^{-1})\quad\text{for all
$x\in\mathbb{R}^{m}$}.
\]

\end{remark}

\begin{remark}
\label{rem.liftingdXdhatX} Let $X=\{X_{1},\ldots,X_{m}\}\subseteq
\mathcal{X}(\mathbb{R}^{n})$ satisfy (H.1), (H.2), (H.3), and let
\[
\mathbb{G}=(\mathbb{R}^{N},\ast,D_{\lambda}),\qquad\widehat{X}:=\{\widehat{X}%
_{1},\ldots,\widehat{X}_{m}\}
\]
be as in Theorem \ref{thm.liftingGroupG}. Moreover, let $d_{X}$ and
$d_{\widehat{X}}$ denote the CC-distances associated with $X$ and
$\widehat{X}$, respectively. Since, for every $j=1,\ldots,m$, we have
\[
\textstyle \widehat{X}_{j}=X_{j}+\sum_{i=1}^{p}r_{i,j}(x,\xi)\,\partial
_{\xi_{i}}%
\]
(for suitable smooth functions $r_{i,j}$), it is easy to recognize that
\[
d_{X}(x,y)\leq d_{\widehat{X}}\left(  (x,\xi),(y,\eta)\right)  \qquad
\forall\,\,x,y\in\mathbb{R}^{n},\,\xi,\eta\in\mathbb{R}^{p}.
\]
Furthermore, given any $z=(x,\xi)\in\mathbb{R}^{N}$ and any $r>0$, we have
\begin{equation}
\pi\left(  B_{\widehat{X}}(z,r)\right)  =B_{X}(x,r),
\label{eq.projectionBalls}%
\end{equation}
where $\pi:\mathbb{R}^{N}\equiv\mathbb{R}^{n}\times\mathbb{R}^{p}%
\rightarrow\mathbb{R}^{n}$ denotes the projection of $\mathbb{R}^{N}$ onto
$\mathbb{R}^{n}$. We explicitly notice that, since $\pi$ is continuous, from
\eqref{eq.projectionBalls} we immediately derive that
\[
\pi\left(  \overline{B_{\widehat{X}}(z,r)}\right)  \subseteq\overline
{B_{X}(x,r)}.
\]

\end{remark}

\section{Uniform Gaussian bounds for operators with constant coefficients}

\label{sec.constantCoeff} Given $\Lambda\geq1$, we denote by $\mathcal{M}%
_{\Lambda}$ the set of the $m\times m$ symmetric matrices $A$ satisfying the
following \emph{uniform ellipticity condition}:
\[
\frac{1}{\Lambda}|\xi|^{2}\leq\langle A\xi,\xi\rangle\leq\Lambda|\xi
|^{2}\qquad\text{for every $\xi\in\mathbb{R}^{m}$}.
\]
For every fixed $A=(a_{i,j})_{i,j=1}^{m}\in\mathcal{M}_{\lambda}$ we define%
\begin{equation}
\label{eq.defiOpLLA}\mathcal{H}_{A}:=\sum_{i,j=1}^{m}a_{i,j}X_{i}%
X_{j}-\partial_{t}\qquad\text{on $\mathbb{R}^{1+n}=\mathbb{R}_{t}%
\times\mathbb{R}_{x}^{n}$},
\end{equation}
Since $A$ is symmetric and positive definite, it admits a \emph{unique}
(symmetric and) positive definite square root, say $S$. As a
con\-se\-quen\-ce, writing $S=(s_{i,j})_{i,j=1}^{m}$, we have
\[
\mathcal{H}_{A}=\sum_{j=1}^{m}Y_{j}^{2}-\partial_{t},\qquad\text{where
$Y_{j}:=\sum_{i=1}^{m}s_{i,j}X_{i}$}.
\]
On the other hand, since $S$ is non-singular, the family $Y=\{Y_{1}%
,\ldots,Y_{m}\}\subseteq\mathcal{X}(\mathbb{R}^{n})$ satisfies assumptions
(H.1)-(H.2)-(H.3). In particular, since the $s_{i,j}$'s are constant, one has
\begin{equation}
\label{eq.LieYa}\mathrm{Lie}(Y)=\mathrm{Lie}(Y_{1},\ldots,Y_{m}) =
\mathfrak{a}.
\end{equation}
Gathering these facts, we can apply Theorem \ref{thm.liftingGroupG} to the
family $Y$: there exist a homogeneous Carnot group $\mathbb{F}$ and a system
\[
\widehat{Y}=\{\widehat{Y}_{1},\ldots,\widehat{Y}_{m}\}
\]
of Lie-generators for the Lie algebra $\mathrm{Lie}(\mathbb{F})$ such that,
for every $i=1,\ldots,m$, the vector field $\widehat{Y}_{i}$ is a lifting of
$Y_{i}$ in the sense of \eqref{lifting}. \vspace*{0.05cm}

The key observation is that, in view of Remark \ref{rem.reviewConst}, the
construction of $\mathbb{F}$ does not really depend on $Y$, but only on the
Lie algebra $\mathrm{Lie}(Y)$ and on the choice of an \emph{adapted basis}. As
a consequence, using \eqref{eq.LieYa} and choosing the \emph{same adapted
basis} used for the construction of $\mathbb{G}$ (notice that, by
\eqref{eq.Viintrinsic}, the stra\-ti\-fi\-ca\-tion $\{V_{k}\}_{k=1}%
^{\sigma_{n}}$ is independent of $X$ or $Y$), we obtain
\[
\mathbb{F}=\mathbb{G}\qquad\text{and}\qquad\widehat{Y}_{j}=\sum_{i=1}%
^{m}s_{i,j}\widehat{X}_{i}.
\]
Summing up, the couple $(\mathbb{G},\widehat{X})$ associated with $X$ provides
a `lifting pair' for the family $Y=\{Y_{1},\ldots,Y_{m}\}$ (in the sense of
Theorem \ref{thm.liftingGroupG}) \emph{which is independent of the fixed
matrix $A\in\mathcal{M}_{\Lambda}$.} By making use of this observation, we are
entitled to use the results established in \cite{BBHeat}, which lead to the
following theorem.

\begin{theorem}
[{{see \cite[Thm.s\,1.4 and 1.6]{BBHeat}}}]\label{thm.existenceGammaprop} With
the above assumptions and no\-ta\-tion, the following facts hold.

\begin{enumerate}
\item If $\widehat{\Gamma}_{A}$ is the \emph{(}unique\emph{)} smooth heat
kernel of
\[
\widehat{\mathcal{H}}_{A}=\sum_{i,j=1}^{m}a_{i,j}\widehat{X}_{i}
\widehat{X}_{j}-\partial_{t}=\sum_{i=1}^{m} \widehat{Y}_{i}^{2}-\partial
_{t}\qquad\text{on $\mathbb{R}_{t}\times\mathbb{R}_{(x,\xi)}^{N}$}%
\]
vanishing at infinity con\-struc\-ted in \cite{Fo}, then the function
\begin{equation}
\label{sec.one:mainThm_defGamma}%
\begin{split}
\Gamma_{A}  &  : \mathbb{R}^{1+n}\times\mathbb{R}^{1+n}\rightarrow
\mathbb{R},\\
\Gamma_{A}\left(  t,x;s,y\right)   &  = \int_{\mathbb{R}^{p}}\widehat{\Gamma
}_{A} \left(  t,\left(  x,\xi\right)  ;s,\left(  y,0\right)  \right)  d\xi,
\end{split}
\end{equation}
is a \emph{global heat kernel} for the operator $\mathcal{H}_{A}$ defined in
\eqref{eq.defiOpLLA}. This means, precisely, that

\begin{itemize}
\item for any fixed $(t,x)\in\mathbb{R}^{1+n}$, we have $\Gamma_{A}%
(t,x;\cdot)\in L_{\mathrm{loc}}^{1}(\mathbb{R}^{1+n})$;

\item for every $\varphi\in C_{0}^{\infty}(\mathbb{R}^{1+n})$ and every
$(t,x)\in\mathbb{R}^{1+n}$, we have
\[%
\begin{split}
&  \mathcal{H}_{A}\left(  \int_{\mathbb{R}^{1+n}}\Gamma_{A}(t,x;s,y)\varphi
(s,y)dsdy\right) \\
&  =\int_{\mathbb{R}^{1+n}}\Gamma_{A}(t,x;s,y)\,\mathcal{H}_{A}\varphi
(s,y)dsdy= -\varphi(t,x).
\end{split}
\]

\end{itemize}

Moreover, $\Gamma_{A}$ enjoys the properties listed below. \vspace*{0.05cm}

\begin{enumerate}
\item[(a)] $\Gamma_{A}\geq0$ and $\Gamma_{A}(t,x;s,y)=0$ if and only if $t\leq
s$; \vspace*{0.07cm}

\item[(b)] $\Gamma_{A}(t,x;s,y)=\Gamma_{A}(t,y;s,x)$\,\,and\,\,$\Gamma
_{A}(t,x;s,y)=\Gamma_{A}(t-s,x;0,y)$;

\item[(c)] $\Gamma_{A}$ is smooth out of the diagonal of $\mathbb{R}%
^{1+n}\times\mathbb{R}^{1+n}$, and
\[
\mathcal{H}_{A}\left(  \Gamma_{A}(\cdot;s,y)\right)  \equiv0\quad\text{on
$\mathbb{R}^{1+n}\setminus\{(s,y)\}$};
\]

\item[(d)] for every fixed $(t,x)\in\mathbb{R}^{1+n}$, if $t>s$ we have
\[
\int_{\mathbb{R}^{n}}\Gamma_{A}(t,x;s,y)dy= \int_{\mathbb{R}^{n}}\Gamma
_{A}(t,x;s,y)dx=1;
\]

\item[(e)] for every $x,y\in\mathbb{R}^{n}$ and $s<\tau<t$ we have
\begin{equation}
\label{reproduction prop Gamma cost}\Gamma_{A}(t,x;s,y)= \int_{\mathbb{R}^{n}%
}\Gamma_{A}(t,x;\tau,\zeta)\,\Gamma_{A}(\tau,\zeta;s,y)d\zeta.
\end{equation}

\end{enumerate}

\item Setting $\widehat{\gamma}_{A}(t,z):=\widehat{\Gamma}_{A}(t,z;0)$, we
have
\[
\widehat{\Gamma}_{A}\left(  t,(x,\xi);s,(y,\eta)\right)  = \widehat{\gamma
}_{A}\left(  t-s,(y,\eta)^{-1}\ast(x,\xi)\right)  ,
\]
so that identity \eqref{sec.one:mainThm_defGamma} becomes
\begin{equation}
\label{sec.one:mainThm_defGamma22222}\Gamma_{A}(t,x;s,y)=\int_{\mathbb{R}^{p}%
}\widehat{\gamma}_{A}\left(  t-s,(y,0)^{-1}\ast(x,\xi)\right)  d\xi.
\end{equation}

\item For every integers $h,k\geq1$, $\alpha,\beta\geq0$ and every choice of
$i_{1},\ldots,i_{h},j_{1},\ldots,j_{k}$ in $\{1,\ldots,m\}$, we have the
following representation formulas, holding true for every $x,y\in
\mathbb{R}^{n}$ and $t,s\in\mathbb{R}$ such that $s<t$:
\begin{align}
&  (\partial_{t})^{\alpha}(\partial_{s})^{\beta}\,X_{i_{1}}^{x}\cdots
X_{i_{h}}^{x}\Gamma_{A}(t,x;s,y)\label{eq.derGammatx}\\[0.15cm]
&  =(-1)^{\beta}\int_{\mathbb{R}^{p}}\left(  (\partial_{t})^{\alpha+\beta}
\widehat{X}_{i_{1}}\cdots\widehat{X}_{i_{h}}\widehat{\gamma}_{A}\right)
\left(  t-s,(y,0)^{-1}\ast(x,\xi)\right)  d\xi;\nonumber\\[0.3cm]
&  (\partial_{t})^{\alpha}(\partial_{s})^{\beta}\,X_{j_{1}}^{y}\cdots
X_{j_{k}}^{y}\Gamma_{A}(t,x;s,y)\label{eq.derGammays}\\[0.15cm]
&  =(-1)^{\beta}\,\int_{\mathbb{R}^{p}}\left(  (\partial_{t})^{\alpha+\beta}
\widehat{X}_{j_{1}}\cdots\widehat{X}_{j_{k}}\widehat{\gamma}_{A}\right)
\left(  t-s,(x,0)^{-1}\ast(y,\xi)\right)  d\xi;\nonumber\\[0.3cm]
&  (\partial_{t})^{\alpha}(\partial_{s})^{\beta}\,X_{j_{1}}^{y}\cdots
X_{j_{k}}^{y}X_{i_{1}}^{x}\cdots X_{i_{h}}^{x}\Gamma_{A}%
(t,x;s,y)\label{eq.derGammatutte}\\[0.15cm]
&  =(-1)^{\beta}\,\int_{\mathbb{R}^{p}}\left(  (\partial_{t})^{\alpha+\beta}
\widehat{X}_{j_{1}}\cdots\widehat{X}_{j_{k}}\left(  (\widehat{X}_{i_{1}}
\cdots\widehat{X}_{i_{h}}\widehat{\gamma}_{A})\circ\widetilde{\iota}\right)
\right) \nonumber\\
&  \qquad\qquad\qquad\left(  t-s,(x,0)^{-1}\ast(y,\xi)\right)  \,d\xi
\,,\nonumber
\end{align}
Here $\widetilde{\iota}:\mathbb{R}^{1+N}\rightarrow\mathbb{R}^{1+N}$ is the
map defined by
\begin{equation}
\label{eq.iotaiotatilde}\widetilde{\iota}(t,(x,\xi))=(t,\iota(x,\xi
))\qquad(\text{with $t\in\mathbb{R}$, $x\in\mathbb{R}^{n}$, $\xi\in
\mathbb{R}^{p}$}),
\end{equation}
and $\iota(x,\xi)=(x,\xi)^{-1}$ is the inverse of $(x,\xi)$ in the Carnot
group $\mathbb{G}$.
\end{enumerate}
\end{theorem}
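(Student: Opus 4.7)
The proof is, in essence, a careful application of \cite[Thm.s\,1.4 and 1.6]{BBHeat}: what must be checked is that the hypotheses of those theorems apply to the operator $\mathcal{H}_{A}$ with the lifting pair $(\mathbb{G},\widehat{X})$ already attached to $X$. My plan is therefore to open the proof by restating the observation made in the discussion preceding the theorem and then invoking the cited results.

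The first step is to rewrite $\mathcal{H}_{A}=\sum_{j=1}^{m}Y_{j}^{2}-\partial_{t}$ via the positive square root $S$ of $A$, and to verify that the family $Y=\{Y_{1},\ldots,Y_{m}\}$ satisfies (H.1)--(H.3) with $\mathrm{Lie}(Y)=\mathfrak{a}$. Since the construction reviewed in Remark \ref{rem.reviewConst} depends only on the abstract Lie algebra and on the choice of an adapted basis, applying Theorem \ref{thm.liftingGroupG} to $Y$ produces the same Carnot group $\mathbb{G}$ and lifted generators $\widehat{Y}_{j}=\sum_{i}s_{i,j}\widehat{X}_{i}$. Consequently $\widehat{\mathcal{H}}_{A}=\sum_{i}\widehat{Y}_{i}^{2}-\partial_{t}$ is a left-invariant, sum-of-squares heat operator on $\mathbb{G}$, to which Folland's construction applies: the smooth heat kernel $\widehat{\Gamma}_{A}$, vanishing at infinity, is available together with all the properties used in \cite{BBHeat}.

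With this setup, part (1) is a direct citation of \cite[Thm.\,1.4]{BBHeat}: the saturated kernel $\Gamma_{A}$ defined by \eqref{sec.one:mainThm_defGamma} is a global heat kernel for $\mathcal{H}_{A}$, and properties (a)--(e) follow by inheriting the corresponding properties of $\widehat{\Gamma}_{A}$ (non-negativity, support in $\{t>s\}$, smoothness off the diagonal, $L^{1}$-normalization in $y$ and the Chapman-Kolmogorov identity) via Fubini-Tonelli, using the decay of $\widehat{\Gamma}_{A}$ in the $\xi$-variables to ensure that the integral defining $\Gamma_{A}$ is finite and behaves well. Part (2) is then immediate: since $\widehat{Y}_{i}$ (equivalently $\widehat{X}_{i}$) is left-invariant on $\mathbb{G}$, so is $\widehat{\mathcal{H}}_{A}$, and hence $\widehat{\Gamma}_{A}\bigl(t,(x,\xi);s,(y,\eta)\bigr)=\widehat{\gamma}_{A}\bigl(t-s,(y,\eta)^{-1}\ast(x,\xi)\bigr)$; substituting $\eta=0$ into \eqref{sec.one:mainThm_defGamma} yields \eqref{sec.one:mainThm_defGamma22222}.

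For part (3), I would invoke \cite[Thm.\,1.6]{BBHeat}. The strategy is to differentiate \eqref{sec.one:mainThm_defGamma22222} under the integral sign, which is legitimate thanks to the Gaussian decay of $\widehat{\gamma}_{A}$ and of its derivatives on $\mathbb{G}$. Writing $\widehat{X}_{i}=X_{i}+R_{i}$, with $R_{i}$ operating only on $\xi$, one converts the $x$-derivative $X_{i}^{x}$ into the intrinsic $\widehat{X}_{i}$-derivative plus a remainder $R_{i}$ that integrates to zero in $\xi$ (a divergence argument that reduces to integration by parts on $\mathbb{R}^{p}$, exploiting decay at infinity); this gives \eqref{eq.derGammatx}. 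The $y$-derivative formula \eqref{eq.derGammays} is obtained either by a parallel argument using the inverse map $\iota$ (whence the twisted map $\widetilde{\iota}$) together with symmetry of $\Gamma_{A}$ in $(x,y)$, while $(\partial_{s})^{\beta}$ passed through $\widehat{\gamma}_{A}(t-s,\cdot)$ produces the factor $(-1)^{\beta}(\partial_{t})^{\beta}$. The mixed formula \eqref{eq.derGammatutte} follows by composing the two previous steps. The only point requiring care is the bookkeeping for the mixed $x$- and $y$-derivatives, where the composition with $\widetilde{\iota}$ appears; this is the main (minor) obstacle, but it is exactly the situation already handled in \cite{BBHeat}, so no new technical difficulty arises. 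The substantive content of the argument lies not in its technique but in the uniformity observation: the lifting pair $(\mathbb{G},\widehat{X})$ is independent of $A\in\mathcal{M}_{\Lambda}$, which will be the cornerstone for the uniform Gaussian bounds obtained in the next section.
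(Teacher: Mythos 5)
Your proposal matches the paper's approach exactly: the paper does not give an independent proof of this theorem but rather sets up the uniformity observation (the lifting pair $(\mathbb{G},\widehat{X})$ is independent of $A\in\mathcal{M}_{\Lambda}$, thanks to $\mathrm{Lie}(Y)=\mathfrak{a}$ and the adapted-basis construction) and then simply cites \cite[Thm.s\,1.4 and 1.6]{BBHeat}. Your sketched elaborations of parts (1)--(3), including Fubini-Tonelli, differentiation under the integral, and the role of $\widetilde{\iota}$, are a fair summary of the mechanism inside the cited theorems rather than new content, and they are consistent with what \cite{BBHeat} actually does.
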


\begin{remark}
\label{rem.representationSymm} By combining the representation formula
\eqref{sec.one:mainThm_defGamma22222} with the symmetry of $\Gamma_{A}$ in $x$
and $y$, we obtain the following alternative identity
\begin{equation}
\label{eq.reprGammaASymm}\Gamma_{A}(t,x;s,y)=\Gamma_{A}(t,y;s,x)=\int%
_{\mathbb{R}^{p}}\widehat{\gamma}_{A} \left(  t-s,(x,0)^{-1}\ast
(y,\xi)\right)  d\xi.
\end{equation}
We shall repeatedly exploit \eqref{eq.reprGammaASymm} in place of \eqref{sec.one:mainThm_defGamma22222}.
\end{remark}

By means of the representation formula \eqref{sec.one:mainThm_defGamma22222}
for $\Gamma_{A}$ and of the analogous representation formulas
\eqref{eq.derGammatx}-\eqref{eq.derGammatutte} for its $(t,X)$-derivatives, we
are able to prove the following theorem, which is the main result in this
section. This will be the starting point to implement the parametrix method
and build a fundamental solution for operators with variable coefficients.

\begin{theorem}
[Gaussian bounds for constant coefficient operators]%
\label{thm.mainParabolicCase} Let the above assumptions and notation do apply.
Moreover, let $\Lambda\geq1$ be fixed. Then, the following facts hold.

\begin{enumerate}
\item There exists a constant $\kappa_{\Lambda}>0$ such that
\begin{equation}
\label{eq.uniformGammaPar}%
\begin{split}
&  \frac{1}{\kappa_{\Lambda}}\,\frac{1}{\left\vert B_{X}(x,\sqrt
{t-s})\right\vert }\,\exp\left(  -\kappa_{\Lambda}\frac{d_{X}(x,y)^{2}}
{t-s}\right)  \leq\Gamma_{A}(t,x;s,y)\\
&  \leq\kappa_{\Lambda}\,\frac{1}{\left\vert B_{X}(x,\sqrt{t-s})\right\vert }
\,\exp\left(  -\frac{d_{X}(x,y)^{2}}{\kappa_{\Lambda}(t-s)}\right)  ,
\end{split}
\end{equation}
for every $x,y\in\mathbb{R}^{n}$, $s<t$ and $A\in\mathcal{M}_{\Lambda}$.

\item For every integers $r\geq1,\alpha,\beta\geq0$, there exists
$\kappa=\kappa_{\Lambda,r,\alpha,\beta}>0$ such that
\begin{equation}
\label{eq.uniformDerGammaPar}%
\begin{split}
&  \left\vert (\partial_{t})^{\alpha}\,(\partial_{s})^{\beta}\,W_{1}\cdots
W_{r}\Gamma_{A}(t,x;s,y)\right\vert \\
&  \leq\kappa\,\frac{(t-s)^{-(\alpha+\beta+r/2)}}{\left\vert B_{X}
(x,\sqrt{t-s})\right\vert }\,\exp\left(  -\frac{d_{X}(x,y)^{2}}{\kappa
(t-s)}\right)  ,
\end{split}
\end{equation}
for every $x,y\in\mathbb{R}^{n}$, $s<t$, \emph{$A\in\mathcal{M}_{\Lambda}$}
and every choice of $W_{1},\ldots,W_{r}$ in
\[
\mathcal{D}_{X}:=\left\{  X_{1}^{x},\ldots,X_{m}^{x},X_{1}^{y},\ldots
,X_{m}^{y}\right\}  .
\]

\item For every integers $r\geq1,\alpha,\beta\geq0$, there exists
$\overline{\kappa}=\overline{\kappa}_{\Lambda,r,\alpha,\beta}>0$ such that
\begin{equation}
\label{eq.uniformGammaABPar}%
\begin{split}
&  \left\vert \left(  \partial_{t}\right)  ^{\alpha}\left(  \partial
_{s}\right)  ^{\beta}W_{1}...W_{r} \Gamma_{A}\left(  t,x;s,y\right)  -\left(
\partial_{t}\right)  ^{\alpha}\left(  \partial_{s}\right)  ^{\beta}%
W_{1}...W_{r} \Gamma_{B}\left(  t,x;s,y\right)  \right\vert \\
&  \leq\overline{\kappa}\left\Vert A-B\right\Vert ^{1/\sigma_{n}}
\frac{\left(  t-s\right)  ^{-\left(  \alpha+\beta+r/2\right)  }} {\left\vert
B_{X}\left(  x,\sqrt{t-s}\right)  \right\vert } \exp\left(  -\frac
{d_{X}\left(  x,y\right)  ^{2}}{\kappa\left(  t-s\right)  }\right)  ,
\end{split}
\end{equation}
for every $x,y\in\mathbb{R}^{n}$, $s<t$, \emph{$A,B\in\mathcal{M}_{\Lambda}$}
and every $W_{1},\ldots,W_{r}\in\mathcal{D}_{X}$ \emph{(}here, $\|\cdot\|$
denotes the usual matrix norm\emph{)}.
\end{enumerate}
\end{theorem}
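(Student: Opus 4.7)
The strategy is to reduce every assertion to a corresponding uniform estimate for the lifted heat kernel $\widehat{\gamma}_A$ on the Carnot group $\mathbb{G}$, and then to integrate along the fiber $\mathbb{R}^p$ by means of the representation formula \eqref{eq.reprGammaASymm} and its analogues \eqref{eq.derGammatx}--\eqref{eq.derGammatutte} for the $(t,X)$-derivatives of $\Gamma_A$. The decisive point, already highlighted just before the statement, is that the lifting pair $(\mathbb{G},\widehat{X})$ furnished by Theorem \ref{thm.liftingGroupG} is \emph{the same} for every $A\in\mathcal{M}_\Lambda$: hence any bound that is uniform in $A\in\mathcal{M}_\Lambda$ on $\mathbb{G}$ produces, after fiber-integration in $\xi$, an analogous uniform bound for $\Gamma_A$.

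To prove \eqref{eq.uniformGammaPar} and \eqref{eq.uniformDerGammaPar}, I would first invoke the classical uniform Gaussian estimates for the heat kernel of a constant-coefficient sub-elliptic operator on a homogeneous Carnot group, proved by Bonfiglioli--Lanconelli--Uguzzoni in \cite{BLUpaper}. These assert that, for every $A\in\mathcal{M}_\Lambda$, every $r\ge 0$ and $\alpha\ge 0$,
\[
\bigl|\widehat{X}_{i_1}\cdots\widehat{X}_{i_r}(\partial_t)^{\alpha}\widehat{\gamma}_A(t,z)\bigr|\le \frac{C\,t^{-(\alpha+r/2)}}{|B_{\widehat{X}}(0,\sqrt{t})|}\exp\!\Bigl(-\frac{d_{\widehat{X}}(0,z)^{2}}{C t}\Bigr),
\]
with matching two-sided bounds when $r=\alpha=0$, and with $C=C(\Lambda,r,\alpha)$. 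Plugging these into \eqref{eq.reprGammaASymm} and \eqref{eq.derGammatx}--\eqref{eq.derGammatutte}, the exponential factor is controlled via $d_X(x,y)\le d_{\widehat{X}}((x,0),(y,\xi))$ from Remark \ref{rem.liftingdXdhatX}, while the volume factor collapses to $|B_X(x,\sqrt{t-s})|^{-1}$ after Fubini together with the projection identity \eqref{eq.projectionBalls} and doubling on $\mathbb{G}$. This immediately yields the upper estimates in (1)--(2). For the lower bound in (1) I would adapt the integration-along-the-fiber argument used in \cite{BiBraGauss}: one restricts the $\xi$-integration to a set on which $d_{\widehat{X}}(0,(x,0)^{-1}\ast(y,\xi))$ is comparable to $d_X(x,y)$, whose existence with adequate measure is ensured by the $D_\lambda$-homogeneity of $d_{\widehat{X}}$ and by the surjectivity of the projection of $\widehat{X}$-balls onto $X$-balls.

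Part (3), which encodes the sharp Hölder dependence on $A$ with exponent $1/\sigma_n$, follows along the same lines. The analogous Hölder estimate for $\widehat{\gamma}_A-\widehat{\gamma}_B$ and its $(\partial_t,\widehat{X})$-derivatives on the Carnot group $\mathbb{G}$ is contained in \cite[Thm.\,2.5]{BLUpaper}, the exponent $1/\sigma_n$ reflecting the fact that a linear change of frame of the first layer $V_1=\mathrm{span}\{\widehat{X}_1,\ldots,\widehat{X}_m\}$ extends through Baker--Campbell--Hausdorff to a group automorphism whose action on the $k$-th layer is polynomial of degree at most $k\le \sigma_n$, so that scaling down to unit time via $D_\lambda$ produces perturbations of order $\|A-B\|^{1/\sigma_n}$ in the worst (top-layer) direction. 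Since the lifting pair $(\mathbb{G},\widehat{X})$ is independent of $A,B\in\mathcal{M}_\Lambda$, that estimate can be applied to $\widehat{\gamma}_A-\widehat{\gamma}_B$ and integrated in $\xi$ via \eqref{eq.derGammatx}--\eqref{eq.derGammatutte}, exactly as in the previous step, to deliver \eqref{eq.uniformGammaABPar}. The main technical obstacle throughout is to track the uniformity of every constant in $A\in\mathcal{M}_\Lambda$ and, in the lower-bound argument, to select the subset of $\mathbb{R}^p$ on which the fiber-integration is performed so that both $|B_X(x,\sqrt{t-s})|^{-1}$ and the $d_X$-Gaussian emerge with their correct orders of magnitude rather than the weaker lifted counterparts.
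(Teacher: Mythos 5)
Your overall strategy is the same as the paper's: uniform Gaussian bounds for the lifted heat kernel $\widehat{\gamma}_A$ on $\mathbb{G}$ from \cite[Thm.\,2.5]{BLUpaper}, plus the representation formulas \eqref{eq.reprGammaASymm} and \eqref{eq.derGammatx}--\eqref{eq.derGammatutte}, plus fiber integration over $\mathbb{R}^p$; and you correctly flag the uniformity of the lifting pair $(\mathbb{G},\widehat{X})$ in $A\in\mathcal{M}_\Lambda$ as the decisive point. The fiber-integration step you sketch (reducing the integral $\int_{\mathbb{R}^p}\exp(-\|(x,0)^{-1}\ast(y,\xi)\|^2/t)\,d\xi$ to $|B_X(x,\sqrt{t})|^{-1}$ times a $d_X$-Gaussian, with a matching lower bound by restricting $\xi$ to a well-chosen subset of $\mathbb{R}^p$) is essentially the content of \cite[Prop.s 4.2, 4.4]{BiBraGauss}, which the paper simply cites; re-deriving it is fine, though your "after Fubini and doubling on $\mathbb{G}$" phrasing substantially understates the work hidden there.

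However, there is a genuine gap in your treatment of parts (2) and (3) that you do not even notice. You write that plugging the BLU estimates into \eqref{eq.derGammatx}--\eqref{eq.derGammatutte} "immediately yields the upper estimates in (1)--(2)". This is true for purely $X^x$-derivatives or purely $X^y$-derivatives, but it fails for the mixed case $W_1\cdots W_r = X^y_{j_1}\cdots X^y_{j_k}\,X^x_{i_1}\cdots X^x_{i_h}$ with $h,k\geq 1$. In that case \eqref{eq.derGammatutte} produces the term $\widehat{X}_{j_1}\cdots\widehat{X}_{j_k}\bigl((\widehat{X}_{i_1}\cdots\widehat{X}_{i_h}\widehat{\gamma}_A)\circ\widetilde{\iota}\bigr)$: because the group inversion $\iota$ is not a left translation, pushing the outer $\widehat{X}_j$'s through $\circ\,\iota$ converts them into vector fields $Z_1,\ldots,Z_k$ that are $D_\lambda$-homogeneous of degree $1$ but \emph{not} left-invariant, and the estimates of \cite{BLUpaper} apply only to the left-invariant generators $\widehat{X}_i$. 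The paper resolves this with Lemma \ref{lem.technicVF}, which decomposes any $D_\lambda$-homogeneous degree-$1$ vector field $Z$ on $\mathbb{G}$ as $\sum_i\gamma_i(z)\,\mathcal{P}_i(\widehat{X}_1,\ldots,\widehat{X}_m)$ with $\gamma_i$ a $D_\lambda$-homogeneous polynomial of degree $i-1$, and then absorbs the polynomial weights $\gamma_i$ into the Gaussian using \eqref{eq.estimgammaalphadX}. Without this lemma or an equivalent argument your "plug in and integrate" approach is not valid for Case III, and the same issue reappears verbatim in your sketch of part (3).
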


In order to establish Theorem \ref{thm.mainParabolicCase} we need the following

\begin{lemma}
\label{lem.technicVF} Let $Z\in\mathcal{X}(\mathbb{R}^{N})$ be $D_{\lambda}%
$-homogeneous of degree $1$. Then,
\begin{equation}
\label{eq.actionZfLemma}Z=\sum_{i=1}^{\sigma_{n}}\gamma_{i}(z)\mathcal{P}%
_{i}(\widehat{X}_{1}, \ldots,\widehat{X}_{m}),
\end{equation}
where $\mathcal{P}_{i}(\theta_{1},\ldots,\theta_{m})$ is a suitable
homogeneous polynomial of \emph{(}Euclidean\emph{)} degree $i$ in the
non-commuting variables $\theta_{1},\ldots,\theta_{m}$, and $\gamma_{i}$ is a
$D_{\lambda}$-homogeneous polynomial of degree $i-1$ \emph{(}for all
$i=1,\ldots,\sigma_{n}$\emph{)}.
\end{lemma}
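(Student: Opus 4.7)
The plan is to write $Z$ in terms of a left-invariant frame on $\mathbb{G}$ built from iterated brackets of the generators $\widehat{X}_1,\dots,\widehat{X}_m$, and then observe that the coefficients in this expansion are forced to be $D_\lambda$-homogeneous polynomials of precisely the right degree.

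First I fix a basis $\{E_1,\dots,E_N\}$ of $\mathrm{Lie}(\mathbb{G})$ adapted to the stratification, with each $E_\alpha$ an iterated commutator of some length $\ell_\alpha\in\{1,\dots,\sigma_n\}$ in the $\widehat{X}_j$'s; this is possible because $\widehat{X}_1,\dots,\widehat{X}_m$ are Lie-generators of the nilpotent-of-step-$\sigma_n$ algebra $\mathrm{Lie}(\mathbb{G})$ (e.g.\ one can extract a Hall basis). By construction $E_\alpha$ is $D_\lambda$-homogeneous of degree $\ell_\alpha$ as a vector field, and expanding the iterated bracket defining $E_\alpha$ gives an identity $E_\alpha=Q_\alpha(\widehat{X}_1,\dots,\widehat{X}_m)$, where $Q_\alpha$ is a homogeneous non-commutative polynomial of Euclidean degree $\ell_\alpha$ in its $m$ arguments.

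By left-invariance, $\{E_1(z),\dots,E_N(z)\}$ is a basis of $T_z\mathbb{G}$ at every $z\in\mathbb{R}^N$, and the dual coframe $\{\omega^1,\dots,\omega^N\}$ is smooth. Setting $g_\alpha(z):=\omega_z^\alpha(Z)$ I obtain smooth coefficients with
\[
Z=\sum_{\alpha=1}^{N} g_\alpha(z)\,E_\alpha.
\]
Now comes what I view as the key step: comparing $D_\lambda$-homogeneities on both sides forces each $g_\alpha$ to be $D_\lambda$-homogeneous of degree $\ell_\alpha-1\geq 0$, and I need to upgrade this to the statement that $g_\alpha$ is actually a \emph{polynomial}. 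This is the main obstacle, but it is handled by a standard Taylor-expansion argument: a smooth function $g$ on $\mathbb{R}^N$ which is $D_\lambda$-homogeneous of a non-negative integer degree $d$ must coincide with the sum of the monomials $z_1^{a_1}\cdots z_N^{a_N}$ with weighted degree $\sum a_k\tau_k=d$ appearing in its Taylor expansion at the origin, because all other weighted-degree components vanish by the homogeneity relation $g(D_\lambda z)=\lambda^d g(z)$ letting $\lambda\to 0$. Hence each $g_\alpha$ is a $D_\lambda$-homogeneous polynomial of degree $\ell_\alpha-1$.

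Substituting $E_\alpha=Q_\alpha(\widehat{X})$ and grouping the terms according to the common value $i=\ell_\alpha\in\{1,\dots,\sigma_n\}$ of the Euclidean degree produces the decomposition announced in \eqref{eq.actionZfLemma}: each surviving summand has the form (polynomial of $D_\lambda$-degree $i-1$) $\times$ (non-commutative polynomial of Euclidean degree $i$ in the $\widehat{X}_j$'s), as required. The only delicate point beyond the polynomial-homogeneity fact highlighted above is the smoothness of the $g_\alpha$, which is immediate from the existence of the smooth dual coframe to $\{E_\alpha\}$.
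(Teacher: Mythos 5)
Your proof is correct. The overall strategy coincides with the paper's — expand $Z$ in a left-invariant frame with $D_\lambda$-homogeneous polynomial coefficients and then replace the frame elements by non-commutative polynomials in the $\widehat{X}_j$'s — but the concrete route is different in two respects worth noting. First, the paper uses the coordinate frame $\{J_i\}$ (the left-invariant fields agreeing with $\partial_{z_i}$ at the origin), passes from $\partial_{z_i}$ to $J_i$ by inverting a lower-triangular polynomial matrix, and only then invokes the fact that each $J_i$ is a bracket polynomial in the generators; you instead choose from the outset an adapted basis $\{E_\alpha\}$ consisting of iterated brackets of the $\widehat{X}_j$'s, so the identity $E_\alpha = Q_\alpha(\widehat{X}_1,\ldots,\widehat{X}_m)$ is built in, and you recover the coefficients via the dual coframe. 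Second, the paper obtains polynomiality of the coefficients by reading off the explicit coordinate expressions, whereas you derive the homogeneity degree abstractly — from the pointwise uniqueness of the frame decomposition together with pushing forward by $D_\lambda$ — and then invoke the (correct, standard) fact that a smooth $D_\lambda$-homogeneous function of non-negative integer degree is a polynomial. Your version avoids the matrix inversion and brings into the open the ``smooth homogeneous $\Rightarrow$ polynomial'' step, which the paper also uses tacitly when asserting that the coefficients $a_k$ and $\gamma_k$ are polynomials. The one thing I would make more explicit in your write-up is the mechanism behind ``comparing $D_\lambda$-homogeneities'': applying $(D_\lambda)_*$ to $Z=\sum g_\alpha E_\alpha$, using $(D_\lambda)_*Z=\lambda^{-1}Z$ and $(D_\lambda)_*E_\alpha=\lambda^{-\ell_\alpha}E_\alpha$, and invoking uniqueness of the frame coefficients gives $g_\alpha\circ D_\lambda=\lambda^{\ell_\alpha-1}g_\alpha$; as stated, this step is correct but compressed.
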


\begin{proof}
Throughout this proof we do not need to separate the `base' va\-ri\-able
$x\in\mathbb{R}^{n}$ from the `lifted' variable $\xi\in\mathbb{R}^{p}$; hence
we use the com\-pact notation $z=(z_{1},\ldots,z_{N})$ for the points of
$\mathbb{R}^{N}$ and we write
\[
D_{\lambda}(z)=\left(  \lambda^{\upsilon_{1}}z_{1},\ldots,\lambda
^{\upsilon_{N}}z_{N}\right)  .
\]
For every fixed $i\in\{1,\ldots,N\}$, we then denote by $J_{i}$ the unique
left-invariant vector field on $\mathbb{G}$ coinciding with $\partial_{z_{i}}$
at $z=0$. By well-known results on Carnot groups (see, e.g., \cite[Sec.s 1.3
and 1.4]{BLUlibro}), $J_{i}$ is $D_{\lambda}$-homogeneous of degree
$\upsilon_{i}$ and
\begin{equation}
\label{eq.exprJi}J_{i}=\partial_{z_{i}}+\sum_{\underset{\upsilon_{k}%
>\upsilon_{i}}{k=1}}^{N} \alpha_{k,i}(z)\,\frac{\partial}{\partial z_{k}},
\end{equation}
where $\alpha_{k,i}$ is a suitable $D_{\lambda}$-homogeneous polynomial of
degree $\upsilon_{k}-\upsilon_{i}$. Starting form \eqref{eq.exprJi}, it is
easy to recognize that
\[
\partial_{z_{i}}=J_{i}+\sum_{\underset{\upsilon_{k}>\upsilon_{i}}{k=1}}%
^{N}\beta_{k,i}(z)\,J_{k},
\]
where $\beta_{k,i}$ is again a $D_{\lambda}$-homogeneous polynomial of degree
$\upsilon_{k}-\upsilon_{i}$. Thus, if $Z$ is any smooth vector field
$D_{\lambda}$-homogeneous of degree $1$, we can write
\begin{equation}
\label{eq.formZLemma}Z=\sum_{k=1}^{N}a_{k}(z)\,\frac{\partial}{\partial z_{k}%
}= \sum_{k=1}^{N}\gamma_{k}(z)\,J_{k},
\end{equation}
where $\gamma_{k}$ is a $D_{\lambda}$-homogeneous polynomial of degree
$\upsilon_{k}-1$. Now, since $J_{1},\ldots,J_{N}$ are left-invariant on
$\mathbb{G}$ and $\widehat{X}_{1},\ldots,\widehat{X}_{m}$ are
\emph{Lie-generators} of $\mathrm{Lie}(\mathbb{G})$, any $J_{i}$ can be
written as a linear combination (with constant coefficients) of iterated
commutators of length $\upsilon_{i}$ of the $\widehat{X}_{i}$'s; more
precisely, we have
\begin{equation}
\label{eq.JipolPi}J_{i}=\mathcal{P}_{i}(\widehat{X}_{1},\ldots,\widehat{X}%
_{m})\qquad(i=1,\ldots,N),
\end{equation}
where $\mathcal{P}_{i}(\theta_{1},\ldots,\theta_{m})$ is a suitable
homogeneous polynomial of (Euclidean) degree $\upsilon_{i}$ in the
non-commuting variable $\theta_{1},\ldots,\theta_{m}$. \vspace*{0.15cm}

Combining \eqref{eq.formZLemma} and \eqref{eq.JipolPi}, we get
\[
Z=\sum_{k=1}^{N}\gamma_{k}(z)\mathcal{P}_{k}(\widehat{X}_{1},\ldots
,\widehat{X}_{m}).
\]
Finally, reminding that $\min_{k}\upsilon_{k}=1$ and $\max_{k}\upsilon
_{k}=\sigma_{n}$ (see, respectively, \eqref{eq.defidela} and
\eqref{eq.Dlambdalifted}), we can reorder the above sum with respect the
$D_{\lambda}$-homogeneity of the $\mathcal{P}_{k}$'s, thus obtaining \eqref{eq.actionZfLemma}.
\end{proof}

We will also need the next

\begin{proposition}
[{See \cite[Prop. 3.10, Rem. 3.9]{BiBraGauss}}]\label{Prop global doubling}
The following \emph{global doubling property} of $d_{X}$ holds: \emph{there
exist $\gamma_{1},\gamma_{2}>0$ such that}
\begin{equation}
\label{eq.globalD}\gamma_{1}\left(  \frac{r}{\rho}\right)  ^{n}\leq
\frac{|B_{X}(x,r)|} {|B_{X}(x,\rho)|}\leq\gamma_{2}\left(  \frac{r}{\rho
}\right)  ^{q},
\end{equation}
for every $x\in\mathbb{R}^{n}$ and every $0<\rho<r$. This also implies, for
every $\theta> 0$,
\begin{equation}
\label{eq.symmHxy}\frac{1}{\left\vert B_{X}(y,\sqrt{r})\right\vert }%
\,\exp\left(  -\frac{d_{X}(x,y)^{2}}{\theta\,r}\right)  \leq\frac{C_{q}%
}{\left\vert B_{X} (x,\sqrt{r})\right\vert }\,\exp\left(  -\frac
{d_{X}(x,y)^{2}}{C_{q}\,\theta\,r}\right)  ,
\end{equation}
where $C_{q}>0$ is a constant only depending on the number $q$ in \eqref{eq.defqhom}.
\end{proposition}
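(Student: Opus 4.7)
The proposition has two assertions. The second, \eqref{eq.symmHxy}, is a purely formal consequence of the first, so the real content is the global doubling bound \eqref{eq.globalD}; my plan splits into (I) establishing the doubling and (II) deriving the symmetry-swap.

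For (I), the first step is to use the $\delta_\lambda$-homogeneity of $d_X$ (Remark \ref{rem.dXhomog}) together with the Jacobian relation $|\delta_\lambda E| = \lambda^q |E|$ to reduce to the unit scale. Applying the dilation of ratio $\lambda = 1/\rho$ one obtains
\[
\frac{|B_X(x,r)|}{|B_X(x,\rho)|} = \frac{|B_X(\delta_{1/\rho} x,\, r/\rho)|}{|B_X(\delta_{1/\rho} x,\, 1)|},
\]
so it suffices to prove
\[
\gamma_1\, s^n \;\le\; \frac{|B_X(y,s)|}{|B_X(y,1)|} \;\le\; \gamma_2\, s^q
\]
uniformly in $y \in \mathbb{R}^n$ and $s \ge 1$ (the range $s<1$ then follows by inverting the ratio).

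For this bounded-scale problem I would pass to the lifted Carnot group $\mathbb{G}$ via Theorem \ref{thm.liftingGroupG}: on $\mathbb{G}$, left-invariance plus $D_\lambda$-homogeneity give the clean formula $|B_{\widehat{X}}(z,r)| \asymp r^Q$ uniformly in $z \in \mathbb{R}^N$, and the Nagel--Stein--Wainger ball-box theorem applies uniformly. Using the projection identity $\pi(B_{\widehat{X}}(z,r)) = B_X(x,r)$ from Remark \ref{rem.liftingdXdhatX} and integrating in the $\xi$-fibers, one writes
\[
|B_{\widehat{X}}((x,0),r)| = \int_{B_X(x,r)} m(x';x,r)\, dx',
\]
where $m(x';x,r)$ is the measure of the $\xi$-fiber of the lifted ball above $x'$. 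Combining pointwise upper and lower ball-box estimates for the fiber measures $m$ (which depend on the vertical components $R_i(x,\xi)$ of the lifted fields in \eqref{lifting}) with the clean two-sided bound $|B_{\widehat{X}}((x,0),r)| \asymp r^Q$ yields two-sided polynomial control of $|B_X(x,r)|$, with the upper exponent $q$ reflecting the weighted (homogeneous) dimension of the base space and the lower exponent $n$ its ordinary Euclidean dimension.

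Part (II) is then a standard polynomial-versus-Gaussian absorption. Setting $t := d_X(x,y)/\sqrt{r}$, the triangle inequality gives $B_X(x,\sqrt{r}) \subseteq B_X(y,\sqrt{r}(1+t))$, and combining with the upper doubling bound from (I) yields $|B_X(x,\sqrt{r})|/|B_X(y,\sqrt{r})| \le \gamma_2 (1+t)^q$. Splitting the Gaussian factor as $\exp(-t^2/\theta) = \exp(-t^2/\theta_1)\exp(-t^2/(C_q\theta))$ with $1/\theta_1 = (C_q-1)/(C_q \theta)$, and choosing $C_q > 1$ large enough that $\sup_{t \ge 0}(1+t)^q \exp(-t^2/\theta_1)$ is finite (a bound depending only on $q$), delivers \eqref{eq.symmHxy}. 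The genuinely delicate step in the whole proof is the bounded-scale uniformity in the base point for (I): since the $X_i$ are not left-invariant with respect to any Lie-group structure on $\mathbb{R}^n$, one cannot simply translate to a compact set, and the required uniformity must be extracted through the lifting to $\mathbb{G}$ together with a careful analysis of the fibers of $\pi$.
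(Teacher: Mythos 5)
The paper offers no proof of this proposition: it is imported verbatim from \cite[Prop.~3.10, Rem.~3.9]{BiBraGauss}, so there is no internal argument to compare against and your attempt must be judged on its own. Your part (II) is structurally correct: the chain $B_X(x,\sqrt r)\subseteq B_X(y,\sqrt r\,(1+t))$ with $t=d_X(x,y)/\sqrt r$, the upper doubling bound, and the splitting of the Gaussian is the standard absorption argument. One caveat: $\sup_{t\ge 0}(1+t)^q\exp(-t^2/\theta_1)$ is of order $\theta_1^{q/2}$ for large $\theta_1$, and since $\theta_1=C_q\theta/(C_q-1)>\theta$ no choice of $C_q$ removes the $\theta$-dependence; so your parenthetical claim that the resulting bound depends only on $q$ is wrong, and the constant in \eqref{eq.symmHxy} necessarily depends on $\theta$ as well (the proposition's own phrasing is loose on this point; in the paper the inequality is only ever invoked for finitely many fixed values of $\theta$, so nothing downstream breaks).

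The genuine gap is in part (I). The dilation reduction to $\gamma_1 s^n\le |B_X(y,s)|/|B_X(y,1)|\le\gamma_2 s^q$ uniformly in $y\in\mathbb{R}^n$ and $s\ge 1$ is correct, but the subsequent step --- writing $|B_{\widehat{X}}((x,0),r)|=\int_{B_X(x,r)}m(x';x,r)\,dx'$ and ``combining pointwise upper and lower ball-box estimates for the fiber measures'' --- is not a proof. To bound $|B_X(x,r)|$ from below you need a uniform \emph{upper} bound on the fibre measure $m$, and to bound it from above a uniform \emph{lower} bound; obtaining these two-sided fibre estimates, uniformly in $x$, $x'$ and $r$ and with the precise $r$-dependence that produces the exponents $n$ and $q$, is the entire technical content of the cited works, and your sketch gives no indication of how they arise or why the output is polynomial with degrees confined to $[n,q]$. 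A complete and short argument is available from material the paper already quotes elsewhere: the global Nagel--Stein--Wainger estimate \eqref{eq.globalNSWBX} from \cite[Thm.~B]{BiBoBra1} gives $c_1\sum_{k=n}^q f_k(x)r^k\le|B_X(x,r)|\le c_2\sum_{k=n}^q f_k(x)r^k$ with $f_k\ge 0$; for $r>\rho$ each term satisfies $(r/\rho)^n f_k(x)\rho^k\le f_k(x)r^k\le (r/\rho)^q f_k(x)\rho^k$, whence \eqref{eq.globalD} follows with $\gamma_1=c_1/c_2$ and $\gamma_2=c_2/c_1$, with no fibre analysis at all.
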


We can now prove Theorem \ref{thm.mainParabolicCase}.

\begin{proof}
[Proof of Theorem \ref{thm.mainParabolicCase}](1)\thinspace\thinspace First of
all, since $\mathbb{G}$ is a Carnot group and $\widehat{X}_{1},\ldots
,\widehat{X}_{m}$ are Lie-generators of $\mathrm{Lie} (\mathbb{G})$, we can
apply \cite[Thm.\,2.5]{BLUpaper}: there exists a constant $\mathbf{c}%
_{\Lambda}\geq1$ such that, if $\widehat{\gamma}_{A}(\cdot)=\widehat{\Gamma
}_{A}(\cdot;0)$ is the global heat kernel of
\[
\widehat{\mathcal{H}}= \textstyle\sum_{i,j=1}^{m}a_{i,j}\widehat{X}%
_{i}\widehat{X}_{j}-\partial_{t}
\]
(see Theorem \ref{thm.existenceGammaprop}), then
\begin{equation}
\label{eq.estiThmBLU}%
\begin{split}
&  \frac{1}{\mathbf{c}_{\Lambda}}\,t^{-Q/2}\,\exp\left(  -\mathbf{c}_{\Lambda}
\frac{\Vert z\Vert^{2}}{t}\right)  \leq\widehat{\gamma}_{A}(t,z)
\leq\mathbf{c}_{\Lambda}\,t^{-Q/2}\,\exp\left(  -\frac{\Vert z\Vert^{2}%
}{\mathbf{c}_{\Lambda}t}\right)  ,
\end{split}
\end{equation}
for every $t>0$, $z\in\mathbb{R}^{N}$ and $A\in\mathcal{M}_{\Lambda}$. Here,
$Q$ is the homogeneous dimension of $\mathbb{G}$ defined in \eqref{eq.homogQG}
and
\[
\Vert\cdot\Vert=d_{\widehat{X}}(\cdot;0),
\]
where $d_{\widehat{X}}$ is the CC distance associated with $\widehat{X}%
=\{\widehat{X}_{1},\ldots,\widehat{X}_{m}\}$. By com\-bi\-ning
\eqref{eq.estiThmBLU} with the representation formula
\eqref{eq.reprGammaASymm}, we then get
\begin{equation}
\label{eq.integralEstimGammaA}%
\begin{split}
&  \frac{1}{\mathbf{c}_{\Lambda}}\,(t-s)^{-Q/2}\int_{\mathbb{R}^{p}}
\exp\left(  -\mathbf{c}_{\Lambda}\frac{\Vert(x,0)^{-1} \ast(y,\xi)\Vert^{2}%
}{t-s}\right)  \,d\xi\leq\Gamma_{A}(t,x;s,y)\\
&  \qquad\leq\mathbf{c}_{\Lambda}\,\,(t-s)^{-Q/2}\int_{\mathbb{R}^{p}}
\exp\left(  -\frac{\Vert(x,0)^{-1}\ast(y,\xi)\Vert^{2}}{\mathbf{c}_{\Lambda
}(t-s)}\right)  d\xi,
\end{split}
\end{equation}
for every $x,y\in\mathbb{R}^{n}$, $t,s\in\mathbb{R}$ with $s<t$ and
$A\in\mathcal{M}_{\Lambda}$. We now exploit the results in \cite[Prop.s 4.2
and 4.4]{BiBraGauss}: there exists a constant $c_{0}>0$ such that, for any
$x,y\in\mathbb{R}^{n}$ and $t>0$,
\begin{equation}
\label{eq.estimBiBraInt}%
\begin{split}
&  \frac{1}{c_{0}\,|B_{X}(x,\sqrt{t})|}\,\exp\left(  -c_{0}\,\frac
{d_{X}(x,y)^{2}}{t}\right) \\[0.07cm]
&  \qquad\quad\leq t^{-Q/2}\int_{\mathbb{R}^{p}}\exp\left(  -\frac
{\Vert(x,0)^{-1}\ast(y,\xi)\Vert^{2}}{t}\right)  d\xi\\[0.07cm]
&  \qquad\qquad\qquad\leq\frac{c_{0}}{|B_{X}(x,\sqrt{t})|}\,\exp\left(
-\frac{d_{X}(x,y)^{2}}{c_{0}t}\right)  .
\end{split}
\end{equation}
Putting together \eqref{eq.integralEstimGammaA}, \eqref{eq.estimBiBraInt} and
the global doubling property of $d_{X}$ in \eqref{eq.globalD}, we immediately
obtain \eqref{eq.uniformGammaPar}. \medskip

(2)\thinspace\thinspace We distinguish three different cases. \medskip

\textsc{Case I:} $W_{1}\cdots W_{r}=X_{i_{1}}^{x}\cdots X_{i_{r}}^{x}$. In
this case, taking into account the re\-pre\-sen\-ta\-tion formula
\eqref{eq.derGammatx}, for every $x,y\in\mathbb{R}^{n}$ and $s<t$ we can
write
\begin{equation}
\label{eq.reprWiGammaCaseI}%
\begin{split}
&  \left\vert (\partial_{t})^{\alpha}(\partial_{s})^{\beta}\,W_{1}\cdots
W_{r}\Gamma_{A}(t,x;s,y)\right\vert \\
&  \qquad\leq\int_{\mathbb{R}^{p}}\left\vert (\partial_{t})^{\alpha+\beta}
\widehat{X}_{i_{1}}\cdots\widehat{X}_{i_{r}}\widehat{\gamma}_{A}\right\vert
\left(  t-s,(y,0)^{-1}\ast(x,\xi)\right)  d\xi.
\end{split}
\end{equation}
Moreover, reminding that the $\widehat{X}_{i}$'s are Lie-ge\-ne\-ra\-tors for
$\mathrm{Lie}(\mathbb{G})$, we can invoke \cite[Thm.\,2.5]{BLUpaper}: there
exists a constant $\mathbf{c}=\mathbf{c}_{\Lambda,r,\alpha,\beta}>0$ such
that, for every $z\in\mathbb{R}^{N}$, $t>0$ and $A\in\mathcal{M}_{\Lambda}$,
one has
\begin{equation}
\label{eq.estimDertzGI}\left\vert (\partial_{t})^{\alpha+\beta}\,\widehat{X}%
_{i_{1}}\cdots\widehat{X}_{i_{r}}\widehat{\gamma}_{A}(t,z)\right\vert
\leq\mathbf{c}\, t^{-(Q/2+\alpha+\beta+r/2)}\,\exp\left(  -\frac{\Vert
z\Vert^{2}}{\mathbf{c}\,t}\right)  .
\end{equation}
By combining \eqref{eq.reprWiGammaCaseI} with \eqref{eq.estimDertzGI} we then
get, for every $x,y\in\mathbb{R}^{n}$ and $s<t$,
\[%
\begin{split}
&  \left\vert (\partial_{t})^{\alpha}(\partial_{s})^{\beta}\,W_{1}\cdots
W_{r}\Gamma_{A}(t,x;s,y)\right\vert \\
&  \qquad\leq\frac{\mathbf{c}}{(t-s)^{\alpha+\beta+r/2}}\cdot(t-s)^{-Q/2}
\int_{\mathbb{R}^{p}}\exp\left(  -\frac{\Vert(y,0)^{-1}\ast(x,\xi)\Vert^{2}}
{\mathbf{c}(t-s)}\right)  d\xi.
\end{split}
\]
From this, by exploiting \eqref{eq.estimBiBraInt} and \eqref{eq.globalD} we
obtain
\begin{equation}
\label{eq.estimquasiFinalI}%
\begin{split}
&  \left\vert (\partial_{t})^{\alpha}(\partial_{s})^{\beta}\,W_{1}\cdots
W_{r}\Gamma_{A}(t,x;s,y)\right\vert \\
&  \qquad\leq{\kappa}\,\frac{(t-s)^{-(\alpha+\beta+r/2)}} {\left\vert
B_{X}(y,\sqrt{t-s})\right\vert }\, \exp\left(  -\frac{d_{X}(x,y)^{2}}%
{\kappa(t-s)}\right)  ,
\end{split}
\end{equation}
where $\kappa>0$ is a suitable constant only depending on $\Lambda,r$ and
$\alpha$. The desired \eqref{eq.uniformDerGammaPar} now follows from
\eqref{eq.estimquasiFinalI} and \eqref{eq.symmHxy}. \medskip

\textsc{Case II:} $W_{1}\cdots W_{r}=X_{j_{1}}^{y}\cdots X_{j_{r}}^{y}$. We
argue exactly as in Case I: by combining the integral representation formula
\eqref{eq.derGammays} with \eqref{eq.estimDertzGI}, we have
\begin{align*}
&  \left\vert (\partial_{t})^{\alpha}(\partial_{s})^{\beta}W_{1}\cdots
W_{r}\Gamma_{A}(t,x;s,y)\right\vert \\
&  \qquad\leq\int_{\mathbb{R}^{p}}\big\vert (\partial_{t})^{\alpha+\beta}
\,\widehat{X}_{j_{1}}\cdots\widehat{X}_{j_{r}}\widehat{\gamma}_{A}%
\big\vert \left(  t-s,(x,0)^{-1}\ast(y,\xi)\right)  d\xi\\[0.1cm]
&  \qquad\leq\frac{\mathbf{c}_{\Lambda,r,\alpha,\beta}}{(t-s)^{\alpha
+\beta+r/2}} \cdot(t-s)^{-Q/2}\int_{\mathbb{R}^{p}} \exp\left(  -\frac
{\Vert(x,0)^{-1}\ast(y,\xi)\Vert^{2}} {\mathbf{c}(t-s)}\right)  d\xi,
\end{align*}
for every $x,y\in\mathbb{R}^{n}$, $s<t$ and $A\in\mathcal{M}_{\Lambda}$. From
this, by taking into account \eqref{eq.estimBiBraInt} and \eqref{eq.globalD}
we immediately obtain \eqref{eq.uniformDerGammaPar}. \medskip

\textsc{Case III:} $W_{1}\cdots W_{r}= X_{j_{1}}^{y}\cdots X_{j_{k}}%
^{y}\,X_{i_{1}}^{x}\cdots X_{i_{h}}^{x}$ (with $h+k=r$). In this last case,
starting from the representation formula \eqref{eq.derGammatutte} we can
write
\begin{equation}
\label{eq.represmixedderCaseIII}%
\begin{split}
&  \left\vert (\partial_{t})^{\alpha}\,(\partial_{s})^{\beta}\,W_{1}\cdots
W_{r}\Gamma_{A}(t,x;s,y)\right\vert \\
&  \leq\int_{\mathbb{R}^{p}}\big|(\partial_{t})^{\alpha+\beta} \widehat{X}%
_{j_{1}}\cdots\widehat{X}_{j_{k}}\big( (\widehat{X}_{i_{1}}\cdots
\widehat{X}_{i_{h}}\widehat{\gamma}_{A})\circ\widetilde{\iota}%
\big) \big| \left(  t-s,(x,0)^{-1}\ast(y,\xi)\right)  d\xi.
\end{split}
\end{equation}
We should now apply some uniform e\-sti\-ma\-tes for the derivatives of
$\widehat{\gamma}_{A}$ as in \eqref{eq.estimDertzGI}. However, due to the
presence of the map $\widetilde{\iota}$ in \eqref{eq.represmixedderCaseIII},
such estimates are not directly available in \cite{BLUpaper}. We then exploit
Lemma \ref{lem.technicVF} to overcome this problem. \vspace*{0.08cm}

To begin with, for a fixed $t>0$ we consider the (smooth) function
\[
u_{t}:\mathbb{R}^{N}\rightarrow\mathbb{R},\qquad u_{t}(z):=\widehat{X}_{i_{1}}
\cdots\widehat{X}_{i_{h}}\widehat{\gamma}_{A}(t,z),
\]
and we repeatedly exploit \cite[Lem.\,5.3]{BiBraGauss}: this gives (see also
\eqref{eq.iotaiotatilde})
\begin{equation}
\label{eq.derKernelLemma}%
\begin{split}
&  \widehat{X}_{j_{1}}\cdots\widehat{X}_{j_{k}}\big( (\widehat{X}_{i_{1}}
\cdots\widehat{X}_{i_{h}}\widehat{\gamma}_{A}) \circ\widehat{\iota
}\big) =\widehat{X}_{j_{1}}\cdots\widehat{X}_{j_{k}}(u_{t}\circ\iota)\\
&  \qquad=(Z_{1}\cdots Z_{k}u_{t})\circ\iota=\big( Z_{1}\cdots Z_{k}\,
\widehat{X}_{i_{1}}\cdots\widehat{X}_{i_{h}}\,\widehat{\gamma}_{A}%
\big) \circ\widehat{\iota},
\end{split}
\end{equation}
where $Z_{1},\ldots,Z_{k}$ are suitable smooth vector fields $D_{\lambda}%
$-homogeneous of degree $1$ but not necessarily left-invariant. Since all the
vector fields in \eqref{eq.derKernelLemma} are $D_{\lambda}$-ho\-mo\-ge\-neous
of degree $1$, we are entitled to apply Lemma \ref{lem.technicVF}, obtaining
\begin{equation}
\label{eq.derKernelLemmaBis}Z_{1}\cdots Z_{k}\,\widehat{X}_{i_{1}}%
\cdots\widehat{X}_{i_{h}}\, \widehat{\gamma}_{A}=\!\!\!\sum_{r\,\leqslant
\,|\omega|\,\leqslant\, r\sigma_{n}}\!\!\!\!\gamma_{\omega}(z)\,\mathcal{Q}%
_{\alpha}(\widehat{X}_{1}, \ldots,\widehat{X}_{m})\widehat{\gamma}_{A}.
\end{equation}
Here, $r = h+k$, $\mathcal{Q}_{\omega}(\theta_{1},\ldots,\theta_{m})$ is a
homogeneous polynomial of (Euclidean) de\-gree $|\omega|$ in the non-commuting
variables $\theta_{1},\ldots,\theta_{m}$, and $\gamma_{\omega}$ is a
$D_{\lambda}$- homogeneous polynomial of degree $|\omega|-r$. On account of
\eqref{eq.derKernelLemmaBis}, we can then write
\begin{equation}
\label{eq.exprKernelQ}%
\begin{split}
&  (\partial_{t})^{\alpha+\beta}\widehat{X}_{j_{1}}\cdots\widehat{X}_{j_{k}}
\big( (\widehat{X}_{i_{1}}\cdots\widehat{X}_{i_{h}}\widehat{\gamma}_{A})
\circ\widetilde{\iota}\,\big)\\
&  =\sum_{r\,\leqslant\,|\omega|\,\leqslant\,r\sigma_{n}}(\partial
_{t})^{\alpha+\beta} \left[  \big(\gamma_{\omega}(\cdot)\mathcal{Q}_{\alpha}
(\widehat{X}_{1},\ldots,\widehat{X}_{m})\widehat{\gamma}_{A}\big) \circ
\widehat{\iota}\right] \\
&  =\sum_{r\,\leqslant\,|\omega|\,\leqslant\,r\sigma_{n}}\left[
\gamma_{\omega}(\cdot)\,\big( (\partial_{t})^{\alpha+\beta}\, \mathcal{Q}%
_{\omega}(\widehat{X}_{1},\ldots,\widehat{X}_{m}) \widehat{\gamma}%
_{A}\big)\right]  \circ\widehat{\iota}.
\end{split}
\end{equation}
We now observe that, by \eqref{eq.estimDertzGI}, for all $z\in\mathbb{R}^{N}$
and $t>0$ we have
\[
\big\vert (\partial_{t})^{\alpha+\beta}\, \mathcal{Q}_{\omega}(\widehat{X}%
_{1},\ldots,\widehat{X}_{m})\widehat{\gamma}_{A}(t,z)\big\vert \leq
\mathbf{c}\,t^{-Q/2-\alpha-\beta-|\omega|/2}\, \exp\left(  -\frac{\Vert
z\Vert^{2}}{\mathbf{c}\,t}\right)  ,
\]
where the constant $\mathbf{c}$ only depends on $\Lambda,r,\alpha$ and $\beta
$. Furthermore, since the fun\-ction $\gamma_{\omega}$ is smooth and
$D_{\lambda}$-homogeneous of degree $|\omega|-r$, a simple ho\-mo\-ge\-nei\-ty
argument shows that (see, e.g., \cite[Prop.\,5.1.4]{BLUlibro})
\begin{equation}
\label{eq.estimgammaalphadX}|\gamma_{\omega}(z)|\leq\mu\,\Vert z\Vert
^{|\omega|-r}\qquad(\text{for all $z\in\mathbb{R}^{N}$}),
\end{equation}
where $\mu>0$ is a `structural' constant which can be chosen independently of
$\omega$. Ga\-the\-ring \eqref{eq.exprKernelQ}-\eqref{eq.estimgammaalphadX},
and bearing in mind the very definition of $\widehat{\iota}$ in
\eqref{eq.iotaiotatilde}, we then obtain the estimate \vspace*{0.05cm}
\begin{equation}
\label{eq.crucialderrhoCaseIII}%
\begin{split}
&  \big\vert (\partial_{t})^{\alpha+\beta}\widehat{X}_{j_{1}}\cdots
\widehat{X}_{j_{k}}\big( (\widehat{X}_{i_{1}}\cdots\widehat{X}_{i_{h}}
\widehat{\gamma}_{A})\circ\widetilde{\iota}\big) \big\vert\\
&  \leq\varrho\,t^{-Q/2-r/2-\alpha-\beta}\!\!\!\!\!\sum_{r\,\leqslant
\,|\omega|\,\leqslant\,r\sigma_{n}}\left(  \frac{\Vert z^{-1}\Vert}{ \sqrt{t}%
}\right)  ^{{|\omega|-r}}\,\exp\left(  -\frac{\Vert z^{-1}\Vert^{2}}
{\mathbf{c}\,t}\right) \\
&  \leq\varrho_{1}\,t^{-Q/2-r/2-\alpha-\beta}\,\exp\left(  -\frac{\Vert
z^{-1}\Vert^{2}} {\varrho_{1}t}\right) \\[0.1cm]
&  (\text{by Remark \ref{rem.dXhomog}})\\[0.1cm]
&  =\varrho_{1}\,t^{-Q/2-r/2-\alpha-\beta}\,\exp\left(  -\frac{\Vert
z\Vert^{2}} {\varrho_{1}t}\right)  ,
\end{split}
\end{equation}
holding true for every $z\in\mathbb{R}^{N}$, $t>0$ and $A\in\mathcal{M}%
_{\Lambda}$ (here, $\rho_{1}>0$ is constant only depending on $\Lambda
,r,\alpha$ and $\beta$). Finally, by combining \eqref{eq.crucialderrhoCaseIII}
with \eqref{eq.represmixedderCaseIII}, we get
\begin{align*}
&  \left\vert (\partial_{t})^{\alpha}\,(\partial_{s})^{\beta}\, W_{1}\cdots
W_{r}\Gamma_{A}(t,x;s,y)\right\vert \\
&  \leq\frac{\rho_{1}}{(t-s)^{\alpha+\beta+r/2}}\cdot(t-s)^{-Q/2}
\int_{\mathbb{R}^{p}}\exp\left(  -\frac{\Vert(x,0)^{-1}\ast(y,\xi) \Vert^{2}%
}{\varrho_{1}\,(t-s)}\right)  d\xi,
\end{align*}
for every $x,y\in\mathbb{R}^{n}$, $s<t$ and $A\in\mathcal{M}_{\Lambda}$. From
this, by taking into account \eqref{eq.estimBiBraInt} and \eqref{eq.globalD},
we obtain \eqref{eq.uniformDerGammaPar}. \medskip

(3)\,\,As for the proof of \eqref{eq.uniformDerGammaPar}, we distinguish three
cases. \medskip

\textsc{Case I:} $W_{1}\cdots W_{r}=X_{i_{1}}^{x}\cdots X_{i_{r}}^{x}$. In
this case, by using the integral re\-pre\-sen\-ta\-tion formula
\eqref{eq.derGammatx} for both $\Gamma_{A}$ and $\Gamma_{B}$, we have the
estimate
\begin{equation}
\label{eq.reprWiGammaABCaseI}%
\begin{split}
&  \left\vert (\partial_{t})^{\alpha}(\partial_{s})^{\beta}\,W_{1}\cdots
W_{r}(\Gamma_{A}-\Gamma_{B})(t,x;s,y)\right\vert \\[0.1cm]
&  \leq\int_{\mathbb{R}^{p}}\big\vert (\partial_{t})^{\alpha+\beta}
\,\widehat{X}_{i_{1}}\cdots\widehat{X}_{i_{r}}(\widehat{\gamma}_{A}
-\widehat{\gamma}_{B})\big\vert \left(  t-s,(y,0)^{-1}\ast(x,\xi)\right)
d\xi,
\end{split}
\end{equation}
for every $x,y\in\mathbb{R}^{n}$, $s<t$ and $A,B\in\mathcal{M}_{\Lambda}$. On
the other hand, since the $\widehat{X}_{i}$'s are Lie-generators for
$\mathrm{Lie}(\mathbb{G})$, we can apply once again \cite[Thm.\,2.5]%
{BLUpaper}: there exists a constant $\mathbf{c}=\mathbf{c}_{\Lambda
,r,\alpha,\beta}>0$ such that
\begin{equation}
\label{eq.MainEstimKernelAB}%
\begin{split}
&  \big\vert (\partial_{t})^{\alpha+\beta}\,\widehat{X}_{i_{1}} \cdots
\widehat{X}_{i_{r}}\widehat{\gamma}_{A}(t,z)-(\partial_{t})^{\alpha+\beta}
\,\widehat{X}_{i_{1}}\cdots\widehat{X}_{i_{r}}\widehat{\gamma}_{B}%
(t,z)\big\vert\\
&  \leq\mathbf{c}\,\| A-B\|^{1/\sigma_{n}}\,t^{-Q/2-\alpha-\beta-r/2}
\exp\left(  -\frac{\Vert z\Vert^{2}}{\mathbf{c}\,t}\right)  .
\end{split}
\end{equation}
With reference to \cite[Thm.\,2.5]{BLUpaper}, the exponent $1/\sigma_{n}$ is
justified by the fact that the step of nilpotency of $\mathbb{G}$ is precisely
$r=\sigma_{n}$ (see Theorem \ref{thm.liftingGroupG}). \vspace*{0.07cm}

Gathering \eqref{eq.reprWiGammaABCaseI} and \eqref{eq.MainEstimKernelAB}, we
then get
\begin{equation}
\label{eq.uniformGammaABCaseIQuasiFinal}%
\begin{split}
&  \left\vert (\partial_{t})^{\alpha}(\partial_{s})^{\beta}\,W_{1}\cdots W_{r}
(\Gamma_{A}-\Gamma_{B})(t,x;s,y)\right\vert \\
&  \quad\leq\mathbf{c}\left\Vert A-B\right\Vert ^{1/\sigma_{n}}(t-s)^{-\alpha
-\beta-r/2}\times\\
&  \quad\quad\times(t-s)^{-Q/2}\int_{\mathbb{R}^{p}} \exp\left(  -\frac
{\Vert(y,0)^{-1}\ast(x,\xi)\Vert^{2}}{\mathbf{c}\,(t-s)}\right)  \,d\xi\\
&  \quad\text{(by \eqref{eq.estimBiBraInt} and \eqref{eq.globalD})}\\
&  \quad\leq\mathbf{c}\,\left\Vert A-B\right\Vert ^{1/\sigma_{n}}\,
\frac{(t-s)^{-\alpha-\beta-r/2}}{\left\vert B_{X}(y,\sqrt{t-s})\right\vert }
\,\exp\left(  -\frac{d_{X}(x,y)^{2}}{\kappa(t-s)}\right)  ,
\end{split}
\end{equation}
for every $x,y\in\mathbb{R}^{n}$, $s<t$ and $A,B\in\mathcal{M}_{\Lambda}$.
Here $\kappa>0$ is constant only depending on $\Lambda,r,\alpha$ and $\beta$.
The desired \eqref{eq.uniformGammaABPar} is now a consequence of
\eqref{eq.uniformGammaABCaseIQuasiFinal} and \eqref{eq.symmHxy}. \medskip

\textsc{Case II:} $W_{1}\cdots W_{r}=X_{j_{1}}^{y}\cdots X_{j_{r}}^{y}$. This
is very similar to Case I (see also Case II in the proof of (2)), and we omit
the details. \medskip

\textsc{Case III:} $W_{1}\cdots W_{r}= X_{j_{1}}^{y}\cdots X_{j_{k}}%
^{y}\,X_{i_{1}}^{x}\cdots X_{i_{h}}^{x}$ (with $h+k=r$). In this last case, by
com\-bi\-ning \eqref{eq.exprKernelQ} with the representation formula
\eqref{eq.derGammatutte} for $\Gamma_{A}$ and $\Gamma_{B}$, we can write, for
every $x,y\in\mathbb{R}^{n}$, $s<t$ and $A,B\in\mathcal{M}_{\Lambda}$,
\[%
\begin{split}
&  \big\vert (\partial_{t})^{\alpha}(\partial_{s})^{\beta}\, W_{1}\cdots
W_{r}(\Gamma_{A}-\Gamma_{B})(t,x;s,y)\big\vert\\
&  \quad\leq\int_{\mathbb{R}^{p}}\big\vert (\partial_{t})^{\alpha+\beta}
\widehat{X}_{j_{1}}\cdots\widehat{X}_{j_{k}}\big( (\widehat{X}_{i_{1}}
\cdots\widehat{X}_{i_{h}}\widehat{\gamma}_{A})\circ\widetilde{\iota}\big)\\
&  \qquad\qquad-(\partial_{t})^{\alpha+\beta}\widehat{X}_{j_{1}}%
\cdots\widehat{X}_{j_{k}}\big( (\widehat{X}_{i_{1}}\cdots\widehat{X}_{i_{h}}
\widehat{\gamma}_{B})\circ\widetilde{\iota}\big)\big\vert d\xi\,\\
&  \quad= \sum_{r\,\leqslant\,|\omega|\,\leqslant\,r\sigma_{n}} \int%
_{\mathbb{R}^{p}} \Big\vert \big[ \gamma_{\omega}(\cdot)\big((\partial
_{t})^{\alpha+\beta} \,\mathcal{Q}_{\omega}(\widehat{X}_{1},\ldots
,\widehat{X}_{m}) (\widehat{\gamma}_{A}-\widehat{\gamma}_{B})\big)\big] \circ
\widehat{\iota}\, \Big\vert d\xi,
\end{split}
\]
where all the integrand functions are evaluated at $\left(  t-s,(x,0)^{-1}%
\ast(y,\xi)\right)  . $ On the other hand, by applying
\eqref{eq.MainEstimKernelAB} to each monomial in $\mathcal{Q}_{\omega}$, we
have
\begin{equation}
\label{eq.estiQalphagammaAB}%
\begin{split}
&  \big\vert (\partial_{t})^{\alpha+\beta}\,\mathcal{Q}_{\omega}
(\widehat{X}_{1},\ldots,\widehat{X}_{m})\widehat{\gamma}_{A}(t,z)-
(\partial_{t})^{\alpha+\beta}\,\mathcal{Q}_{\omega}(\widehat{X}_{1},
\ldots,\widehat{X}_{m})\widehat{\gamma}_{B}(t,z)\big\vert\\
&  \qquad\leq\mathbf{c}\,\| A-B\|^{1/\sigma_{n}}\,t^{-Q/2-\alpha-\beta
-|\omega|/2} \exp\left(  -\frac{\Vert z\Vert^{2}}{\mathbf{c}\,t}\right)  ,
\end{split}
\end{equation}
where the constant $\mathbf{c}=\mathbf{c}_{\Lambda,r,\alpha,\beta}>0$ can be
chosen independently of $\omega$. Gathering \eqref{eq.estiQalphagammaAB},
\eqref{eq.estimgammaalphadX} and the definition of $\widehat{\iota}$, we then
obtain
\begin{equation}
\label{eq.estimKergammaABQFinal}%
\begin{split}
&  \sum_{r\,\leqslant\,|\omega|\,\leqslant\,r\sigma_{n}}
\Big\vert \big[ \gamma_{\omega}(\cdot)\big((\partial_{t})^{\alpha+\beta}
\,\mathcal{Q}_{\omega}(\widehat{X}_{1},\ldots,\widehat{X}_{m})
(\widehat{\gamma}_{A}-\widehat{\gamma}_{B})\big)\big] \circ\widehat{\iota}\,
\Big\vert(t,z)\\
&  \qquad\leq\varrho\,\| A-B\|^{1/\sigma_{n}}\,t^{-Q/2-\alpha-\beta-r/2}
\times\\
&  \qquad\qquad\times\sum_{r\,\leqslant\,|\omega|\,\leqslant\,r\sigma_{n}}
\left(  \frac{\Vert z^{-1}\Vert}{\sqrt{t}}\right)  ^{|\omega|-r} \exp\left(
-\frac{\Vert z^{-1}\Vert^{2}}{\mathbf{c}\,t}\right) \\
&  \qquad\leq\varrho_{1}\,\| A-B\|^{1/\sigma_{n}}\,t^{-Q/2-\alpha-\beta-r/2}
\exp\left(  -\frac{\Vert z^{-1}\Vert^{2}}{\varrho_{1}t}\right) \\
&  \qquad(\text{by Remark \ref{rem.dXhomog}})\\[0.1cm]
&  \qquad= \varrho_{1}\,\| A-B\|^{1/\sigma_{n}}\,t^{-Q/2-\alpha-\beta-r/2}
\exp\left(  -\frac{\Vert z\Vert^{2}}{\varrho_{1}t}\right)  ,
\end{split}
\end{equation}
for every $z\in\mathbb{R}^{N}$, $t>0$ and $A,B\in\mathcal{M}_{\Lambda}$. Here
$\rho_{1}>0$ is a constant only depending on $\Lambda,r,\alpha$ and $\beta$.
With \eqref{eq.estimKergammaABQFinal} at hand, we can establish
\eqref{eq.uniformGammaABPar} as in {Case III} of (2). This completes the proof.
\end{proof}

\section{Operators with H\"older-continuous coefficients}

\label{sec.Holdervar}The aim of this section is to prove existence and several
`structural properties' of a {global heat kernel} for the variable coefficient
operator (\ref{H var}). Our proof of Theorem \ref{thm.thmVariableCoeff} is
based on a suitable adaptation of the celebrated method developed by E.E. Levi
to study uniformly elliptic equations of order $2n$ (see \cite{Levi1, Levi2}),
and later extended to the uniformly parabolic equations (see \cite{Fried}). As
explained in the Introduction, this approach has been already exploited in
\cite{BBLUMemoir} to prove an analog of Theorem \ref{thm.thmVariableCoeff} for
\emph{generic parabolic H\"{o}rmander operators}
\[
H=\sum_{i,j=1}^{m}\alpha_{i,j}(t,x)X_{i}X_{j}+\sum_{k=1}^{m}\alpha
_{k}(t,x)X_{k}+\alpha_{0}(t,x)-\partial_{t},
\]
under the following `structural assumptions':

\begin{itemize}
\item[(a)] $X_{1},\ldots,X_{m}$ are smooth vector fields on $\mathbb{R}^{n}$
(for some $m=k+n$) and they satisfy H\"{o}r\-man\-der's rank condition at
every point of $\mathbb{R}^{n}$;

\item[(b)] the coefficient functions of $H$ are globally
H\"older-con\-ti\-nuo\-us, and
\begin{equation}
\label{eq.assumptMemoiraij}\alpha_{i,j}(t,x) \equiv%
\begin{cases}
1, & \text{if $i = j$},\\
0, & \text{if $i\neq j$},
\end{cases}
\end{equation}
for every $k+1\leq i,j\leq k+n$;

\item[(c)] there exists a bounded domain $\Omega_{0}\subseteq\mathbb{R}^{n}$
such that
\begin{equation}
\label{eq.assumptionMemoir}\text{$\left(  X_{1},\ldots,X_{k},X_{k+1}%
,\ldots,X_{k+n}\right)  \equiv\left(  0,\ldots,0,\partial_{x_{1}}%
,\ldots,\partial_{x_{n}}\right)  $ \,\,on $\mathbb{R}^{n}\setminus\Omega_{0}$%
}.
\end{equation}

\end{itemize}

Clearly, the hypotheses of our Theorem \ref{thm.thmVariableCoeff} do not
necessarily imply \eqref{eq.assumptMemoiraij}-\eqref{eq.assumptionMemoir};
how\-ever, these `structural assumptions' play a (key) r\^{o}le only in
\cite[Part I]{BBLUMemoir}, where the Authors carry out a careful analysis of
the constant co\-ef\-fi\-cient operator cor\-re\-spon\-ding to $H$ in order to
e\-sta\-blish the analog of Theorems \ref{thm.existenceGammaprop}%
-\ref{thm.mainParabolicCase}.

Since in our homogeneous setting we have been able to study constant
coefficient operators without requiring \eqref{eq.assumptionMemoir} (and with
a totally different approach), we can prove Theorem \ref{thm.thmVariableCoeff}
by proceeding \emph{verbatim} as in \cite[Part II]{BBLUMemoir}: what we only
need to check is that all the `structural' ingredients used in
\cite{BBLUMemoir} to set up the Levi method are satisfied in our context. We
devote the rest of this section to this aim.

\subsection{Heat kernel for constant coefficients operators}

\label{subsec.UnifEstim} A first fundamental ingredient for the Levi method in
\cite{BBLUMemoir} is the existence of a `well-behaved' (global) heat kernel
for the constant coefficient operator obtained by freezing the
co\-ef\-fi\-cients $a_{i,j}$ (but not the $X_{i}$'s) at any point
$(t_{0},x_{0})\in\mathbb{R}^{1+n}$, that is,
\begin{equation}
\label{eq.OpFrozen}\mathcal{H}_{(t_{0},x_{0})}= \sum_{i,j=1}^{m}a_{i,j}%
(t_{0},x_{0})X_{i}X_{j}-\partial_{t},
\end{equation}
together with some \emph{uniform estimates} of this kernel with respect to
$(t_{0},x_{0})\in\mathbb{R}^{1+n}$ (see, precisely, \cite[Thm.\,10.10]%
{BBLUMemoir}). These results in our context are contained in Theorems
\ref{thm.existenceGammaprop} and \ref{thm.mainParabolicCase}.

Let us now introduce a convenient notation which shall play a key r\^{o}le in
the sequel: following \cite{BBLUMemoir}, we denote by $\mathbf{E}$ the
\emph{Gaussian-type function}
\begin{equation}
\label{eq.defdXGaussE}\mathbf{E}(x,y,t):=\frac{1}{|B_{X}(x,\sqrt{t})|}\,
\exp\left(  -\frac{d_{X}(x,y)^{2}}{t}\right)  \qquad(x,y\in\mathbb{R}%
^{n},\,t>0).
\end{equation}
We explicitly notice that, by exploiting Proposition
\ref{Prop global doubling}, for every $\kappa>0$ there exists $c=c(\kappa)>0$
such that
\[%
\begin{split}
&  c(\kappa)^{-1}\,\frac{1}{|B_{X}(x,\sqrt{t})|}\, \exp\left(  -\frac
{d_{X}(x,y)^{2}}{\kappa\,t}\right)  \leq\mathbf{E}(x,y,\kappa t)\\
&  \qquad\qquad\leq c(\kappa)\,\frac{1}{|B_{X}(x,\sqrt{t})|}\, \exp\left(
-\frac{d_{X}(x,y)^{2}}{\kappa\,t}\right)  ,
\end{split}
\]
for any $x,y\in\mathbb{R}^{n}$ and any $t>0$. With this notation at hand, we
can rewrite the Gaussian bounds of Theorem \ref{thm.mainParabolicCase} as follows.

\begin{corollary}
\label{cor.estimconE} Under the assumptions of Theorem
\ref{thm.thmVariableCoeff}, for every $(t_{0},x_{0})\in\mathbb{R}^{1+n}$ let
us denote by $\Gamma_{(t_{0},x_{0})}$ the global heat kernel of the constant
coefficient operator \eqref{eq.OpFrozen}. Then, the following estimates hold.

\begin{enumerate}
\item There exist constants $\kappa_{\Lambda},\,\nu_{\Lambda}>0$ such that
\begin{equation}
\label{eq.uniformGaussianFrozenEE}\frac{1}{\nu_{\Lambda}}\,\mathbf{E}%
(x,y,\kappa_{\Lambda}^{-1}(t-s))\leq\Gamma_{(t_{0},x_{0})}(t,x;s,y)\leq
\nu_{\Lambda}\, \mathbf{E}(x,y,\kappa_{\Lambda}(t-s)),
\end{equation}
for every $x,y\in\mathbb{R}^{n}$, every $s<t$ and every $(t_{0},x_{0})
\in\mathbb{R}^{1+n}$.

\item For every integer $r\geq1$, $i_{1},\ldots,i_{r}\in\{1,\ldots,m\}$ and
every integer $\alpha\geq0$, there exist constants $\kappa=\kappa
_{\Lambda,r,\alpha},\,\nu=\nu_{\Lambda,r,\alpha}>0$ such that
\[
\left\vert (\partial_{t})^{\alpha}\,X_{i_{1}}^{x}\cdots X_{i_{r}}^{x}
\Gamma_{(t_{0},x_{0})}(t,x;s,y)\right\vert \leq\nu\,(t-s)^{-(\alpha+r/2)}\,
\mathbf{E}(x,y,\kappa(t-s)),
\]
\begin{equation}
\label{eq.uniformABFrozenEE}%
\begin{split}
&  \left\vert (\partial_{t})^{\alpha}\,X_{i_{1}}^{x}\cdots X_{i_{r}}^{x}
\left(  \Gamma_{(t_{0},x_{0})}(t,x;s,y)-\Gamma_{(t_{1},x_{1})}
(t,x;s,y)\right)  \right\vert \\[0.15cm]
&  \quad\leq{\nu}\, d_{P}\left(  (t_{0},x_{0}),(t_{1},x_{1})\right)
^{\alpha/\sigma_{n}}(t-s)^{-(\alpha+r/2)} \,\mathbf{E}(x,y,{\kappa}(t-s)),
\end{split}
\end{equation}
for every $x,y\in\mathbb{R}^{n}$, $s<t$ and $(t_{0},x_{0}),(t_{1},x_{1}%
)\in\mathbb{R}^{1+n}$.
\end{enumerate}
\end{corollary}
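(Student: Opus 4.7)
The corollary is a reformulation of Theorem \ref{thm.mainParabolicCase} in terms of the Gaussian-type function $\mathbf{E}$, specialized to the operators $\mathcal{H}_{(t_0,x_0)}$ obtained by freezing coefficients. The plan is simply to apply that theorem with $A = A(t_0,x_0)$ (and, for the difference estimate, also $B = A(t_1,x_1)$); the uniform ellipticity hypothesis (ii) of Theorem \ref{thm.thmVariableCoeff} guarantees $A(t_0,x_0) \in \mathcal{M}_\Lambda$ for every $(t_0,x_0) \in \mathbb{R}^{1+n}$, so the constants supplied by Theorem \ref{thm.mainParabolicCase} are automatically uniform in the freezing point.

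For part (1), I would apply Theorem \ref{thm.mainParabolicCase}(1) directly and recast both sides of \eqref{eq.uniformGammaPar} in terms of $\mathbf{E}$ using the equivalence
\[
\frac{1}{|B_X(x,\sqrt{t-s})|}\,\exp\!\Big(-\frac{d_X(x,y)^2}{\kappa(t-s)}\Big) \;\asymp\; \mathbf{E}(x,y,\kappa(t-s))
\]
noted just after \eqref{eq.defdXGaussE}; this delivers \eqref{eq.uniformGaussianFrozenEE} with $\kappa_\Lambda,\nu_\Lambda$ depending only on $\Lambda$. The first estimate in part (2) is obtained analogously from Theorem \ref{thm.mainParabolicCase}(2), taking $\beta = 0$ and $W_k = X_{i_k}^x$ for each $k$, and again rewriting through $\mathbf{E}$.

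For the difference estimate \eqref{eq.uniformABFrozenEE}, I would invoke Theorem \ref{thm.mainParabolicCase}(3) with $A = A(t_0,x_0)$, $B = A(t_1,x_1)$, $\beta = 0$, $W_k = X_{i_k}^x$. This produces the factor $\|A(t_0,x_0) - A(t_1,x_1)\|^{1/\sigma_n}$. By hypothesis (i) of Theorem \ref{thm.thmVariableCoeff}, each $a_{i,j}$ lies in $C_X^{\alpha}(\mathbb{R}^{1+n})$, so
\[
\|A(t_0,x_0) - A(t_1,x_1)\| \;\leq\; C\,d_P\bigl((t_0,x_0),(t_1,x_1)\bigr)^{\alpha},
\]
where $d_P$ is the parabolic distance induced by $d_X$, and $C$ depends only on the $C_X^\alpha$-norms of the $a_{i,j}$. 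Raising to the power $1/\sigma_n$ produces the exponent $\alpha/\sigma_n$ appearing in the corollary, and a last conversion to $\mathbf{E}$ concludes the argument.

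The main conceptual point (rather than an obstacle) is identifying which Hölder exponent is being used: the $\alpha$ appearing in $d_P^{\alpha/\sigma_n}$ is the Hölder exponent of the coefficients $a_{i,j}$, while the exponent $1/\sigma_n$ is the intrinsic exponent supplied by Theorem \ref{thm.mainParabolicCase}(3), which itself traces back to the step of nilpotency $r = \sigma_n$ of the lifted Carnot group $\mathbb{G}$ as established in Theorem \ref{thm.liftingGroupG}. Beyond this bookkeeping of exponents, no new analytical input is required: all the heavy lifting has already been done in Theorem \ref{thm.mainParabolicCase}.
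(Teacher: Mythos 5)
Your proposal is correct and takes essentially the same route as the paper: apply Theorem \ref{thm.mainParabolicCase} with $A = A(t_0,x_0)$ (and $B = A(t_1,x_1)$ for the difference), noting that ellipticity places all frozen matrices in $\mathcal{M}_\Lambda$, then rewrite through $\mathbf{E}$ and use the global Hölder bound $\|A(t_0,x_0)-A(t_1,x_1)\| \leq K\, d_P((t_0,x_0),(t_1,x_1))^{\alpha}$ before raising to the power $1/\sigma_n$. Your remark disambiguating the two uses of $\alpha$ (Hölder exponent of the coefficients versus number of time derivatives) matches the paper's intent.
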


As already pointed out, all the assertions in the above Corollary are actually
contained in Theorems \ref{thm.existenceGammaprop} and
\ref{thm.mainParabolicCase}, since our assumptions imply that
\[
A(t_{0},x_{0})=
\begin{pmatrix}
a_{i,j}(t_{0},x_{0})
\end{pmatrix}
_{i,j=1}^{m}\in\mathcal{M}_{\Lambda}\qquad\text{for every $(t_{0},x_{0}%
)\in\mathbb{R}^{1+n}$}.
\]
As for estimate \eqref{eq.uniformABFrozenEE}, if follows from
\eqref{eq.uniformGammaABPar} and the fact that, since the $a_{i,j}$'s are
globally H\"{o}lder-continuous (see assumption (i) in Theorem
\ref{thm.thmVariableCoeff}), we have
\begin{align*}
\| A(t_{0},x_{0})-A(t_{1},x_{1})\|  &  \leq\left(  \sum_{i,j=1}^{m}%
|a_{i,j}(t_{0},x_{0})-a_{i,j}(t_{1},x_{1})|^{2}\right)  ^{1/2}\\[0.1cm]
&  \leq K\,d_{P}\left(  (t_{0},x_{0}),(t_{1},x_{1})\right)  ^{\alpha},
\end{align*}
where we have set $K:=\max_{i,j}\Vert a_{i,j}\Vert_{\alpha,\,\mathbb{R}^{1+n}%
}$. \medskip

\subsection{Metric properties of $d_{X}$}

\label{subsec.metricPropdX} A second important ingredient for the argument in
\cite{BBLUMemoir} is the validity of the following estimates for the
CC-distance $d_{X}$, which in \cite{BBLUMemoir} are heavily based on
assumption \eqref{eq.assumptionMemoir} (see \cite[Lem.\,2.4]{BBLUMemoir}):

\begin{itemize}
\item there exists a constant $\mathbf{c}>0$ such that
\begin{equation}
\label{eq.dXgreaterdE}d_{X}(x,y)\geq\mathbf{c}|x-y|\qquad\text{$\forall
\,\,x,y\in\mathbb{R}^{n} $};
\end{equation}

\item for every $\sigma>0$ there exists a constant $\mathbf{c}(\sigma)>0$ such
that
\begin{equation}
d_{X}(x,y)\leq\mathbf{c}(\sigma)|x-y| \quad\forall\,\,x,y\in\mathbb{R}%
^{n}:\,\max\{|x-y|,d_{X}(x,y)\}\geq\sigma. \label{eq.dXlowerdEloc}%
\end{equation}

\end{itemize}

Starting from \eqref{eq.dXgreaterdE}-\eqref{eq.dXlowerdEloc}, it is possible
to prove several global properties of the `geometry' of $(\mathbb{R}^{n}%
,d_{X})$ which play a key r\^{o}le in the analysis of $H$. \vspace*{0.05cm}

On the other hand, by carefully scrutinizing the proofs in \cite[Part
II]{BBLUMemoir}, it is easy to recognize that one \emph{does not really need}
\eqref{eq.dXgreaterdE}-\eqref{eq.dXlowerdEloc}: in fact, the only properties
of $d_{X}$ which intervene in the Levi method are the following:

\begin{enumerate}
\item[(a)] any $B_{X}$-ball is bounded in the Euclidean sense;

\item[(b)] there exist constants $\mathbf{c}>0$ and $\vartheta\geq n$ such
that
\[
|B_{X}(x,Mr)|\leq\mathbf{c}M^{\vartheta}|B_{X}(x,r)|\qquad\forall\,\,
M\geq1,\,x\in\mathbb{R}^{n},\,r>0;
\]

\item[(c)] for every $R>0$ there exists a constant $\mathbf{c}(R)>0$ such
that
\[
|B_{X}(x,r)|\geq\mathbf{c}(R)\,r^{\vartheta}\qquad\forall\,\,0<r\leq R,
\,x\in\mathbb{R}^{n},
\]
where $\vartheta$ is as in (b);

\item[(d)] for every $\sigma>0$ there exists a constant $\mathbf{c}(\sigma)>0$
such that
\[
|B_{X}(x,r)|\geq\mathbf{c}(\sigma)\,r^{n}\qquad\forall\,\,r\geq\sigma
>0,\,x\in\mathbb{R}^{n}.
\]

\end{enumerate}

Even if we cannot expect that \eqref{eq.dXgreaterdE}-\eqref{eq.dXlowerdEloc}
hold in our homogeneous setting, the next proposition shows that properties
(a)-(d) are still satisfied by $d_{X}$.

\begin{proposition}
\label{thm.propdXnostre} Let the assumptions and the notation of Theorem
\ref{thm.thmVariableCoeff} do apply. In particular, let $q=\sum_{i=1}%
^{n}\sigma_{i}\geq n$ be the $\delta_{\lambda}$-homogeneous dimension of
$\mathbb{R}^{n}$. \vspace*{0.05cm}

Then, the following facts hold.

\begin{enumerate}
\item A subset $B\subseteq\mathbb{R}^{n}$ is bounded with respect to $d_{X}$
\emph{if and only if} it is bounded with respect to the Euclidean distance.

\item There exists a constant $\gamma>0$ such that
\begin{equation}
\label{eq.doublingdXneededMem}|B_{X}(x,Mr)|\leq\gamma\,M^{q}|B(x,r)|\qquad
\forall\,\,M\geq1,\,x\in\mathbb{R}^{n},\,r>0.
\end{equation}

\item There exists a constant $\omega>0$ such that
\begin{equation}
\label{eq.lowerboundvolMem}|B_{X}(x,r)|\geq\omega\,r^{q}\qquad\forall
\,\,x\in\mathbb{R}^{n},\,r>0.
\end{equation}

\item For every $\sigma>0$ there exists a constant $\mathbf{c}=\mathbf{c}%
(\sigma)>0$ such that
\begin{equation}
\label{eq.lowerboundvolMemnn}|B_{X}(x,r)|\geq\mathbf{c}(\sigma)\,r^{n}
\qquad\forall\,\,x\in\mathbb{R}^{n},\,r\geq\sigma.
\end{equation}

\end{enumerate}
\end{proposition}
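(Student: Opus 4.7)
The plan is to establish the four properties by systematically exploiting the $\delta_\lambda$-homogeneity of $d_X$ (Remark~\ref{rem.dXhomog}) together with the global doubling property already stated in Proposition~\ref{Prop global doubling}; no input beyond the excerpt is needed.

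For part (1), I would introduce the $\delta_\lambda$-homogeneous gauge $\|y\|_\delta := \max_{1\le i\le n} |y_i|^{1/\sigma_i}$ and set $\|y\|_X := d_X(0,y)$. Both are continuous, vanish only at $0$, and are $\delta_\lambda$-homogeneous of degree $1$. On the compact $\delta_\lambda$-unit sphere $\Sigma := \{\|y\|_\delta = 1\}$, $\|\cdot\|_X$ attains a strictly positive minimum $c$ and a finite maximum $C$; writing every $y \ne 0$ as $y = \delta_{\|y\|_\delta}(w)$ with $w \in \Sigma$ and invoking homogeneity yields $c\|y\|_\delta \le \|y\|_X \le C\|y\|_\delta$ on all of $\mathbb{R}^n$. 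An elementary inspection of the powers $\sigma_i$ shows that $\|\cdot\|_\delta$ and $|\cdot|$ induce the same bounded sets, and the triangle inequality $d_X(x,y) \le \|x\|_X + \|y\|_X$ then proves part (1). Part (2) is just a rewriting of the upper half of \eqref{eq.globalD} with $r$ replaced by $Mr$ and $\rho$ by $r$.

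The main work is in part (3). Since $\delta_\lambda(0)=0$, the identity $\delta_\lambda(B_X(0,1)) = B_X(0,\lambda)$ from Remark~\ref{rem.dXhomog}(2) together with the Jacobian $\det(\delta_\lambda) = \lambda^q$ give $|B_X(0,r)| = \omega_0 r^q$ with $\omega_0 := |B_X(0,1)|>0$. For general $x$ I would split into two cases. If $r \le \|x\|_\delta$, I set $\lambda := \|x\|_\delta$ and $w := \delta_{1/\lambda}(x) \in \Sigma$; then Remark~\ref{rem.dXhomog}(2) gives $|B_X(x,r)| = \lambda^q |B_X(w, r/\lambda)|$, and since $r/\lambda \le 1$, the upper doubling \eqref{eq.globalD} rearranged as $|B_X(w, r/\lambda)| \ge \gamma_2^{-1}(r/\lambda)^q |B_X(w,1)|$ yields $|B_X(x,r)| \ge \gamma_2^{-1} r^q \inf_{w\in\Sigma}|B_X(w,1)|$. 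The infimum is strictly positive because $w \mapsto |B_X(w,1)|$ is lower semicontinuous (by continuity of $d_X$ and Fatou's lemma) and $\Sigma$ is compact. If $r \ge \|x\|_\delta$, the bound $\|x\|_X \le C\|x\|_\delta \le Cr$ from part (1) combined with the triangle inequality gives $B_X(0,r) \subseteq B_X(x, (C+1)r)$; applying part (2) with $M = C+1$ and using $|B_X(0,r)| = \omega_0 r^q$ yields $|B_X(x,r)| \ge \gamma^{-1}(C+1)^{-q}\omega_0 r^q$. The minimum of the two constants proves \eqref{eq.lowerboundvolMem}. Part (4) is then immediate: for $r \ge \sigma$, since $q\ge n$, one has $r^{q-n} \ge \sigma^{q-n}$, so $|B_X(x,r)| \ge \omega r^q \ge \omega \sigma^{q-n} r^n$.

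The main obstacle is the uniform lower bound in part (3): the absence of a translation-invariant group structure prevents a one-line reduction to the ball at the origin, so homogeneity has to be leveraged in tandem with the doubling property via the two-case analysis outlined above; the lower-semicontinuity argument for $\inf_{w\in\Sigma}|B_X(w,1)|>0$ is the most delicate but essentially elementary step.
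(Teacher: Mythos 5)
Your proof is correct, and for part (3) it follows a genuinely different route than the paper. The paper derives the lower bound $|B_X(x,r)|\geq\omega\,r^q$ directly from the \emph{global} Nagel--Stein--Wainger-type estimate of \cite{BiBoBra1} (Theorem B), which expresses $|B_X(x,r)|$ as a two-sided comparison with $\sum_{k=n}^{q}f_k(x)r^k$ where each $f_k$ is $\delta_\lambda$-homogeneous of degree $q-k$ and $f_q$ is a strictly positive constant; the lower bound then falls out immediately with $\omega=c_1 f_q$. You instead prove \eqref{eq.lowerboundvolMem} from scratch using only the tools already in the paper: the exact scaling $|B_X(0,r)|=\omega_0 r^q$ (valid because $\delta_\lambda$ fixes the origin), the global doubling \eqref{eq.globalD}, and a compactness argument on the $\delta_\lambda$-unit sphere $\Sigma$. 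The two-case split (small $r$ relative to $\|x\|_\delta$ handled by scaling $x$ onto $\Sigma$ and invoking doubling; large $r$ handled by the inclusion $B_X(0,r)\subseteq B_X(x,(C+1)r)$ plus doubling) is a legitimate substitute, and the lower-semicontinuity-via-Fatou step giving $\inf_\Sigma|B_X(\cdot,1)|>0$ is correct. What your approach buys is self-containment at the cost of slightly more work; what the paper's approach buys is conciseness by leaning on the heavier external result. For part (1) the underlying ideas (compactness plus $\delta_\lambda$-homogeneity) coincide, although the paper proceeds more directly: it uses continuity of $d_X(0,\cdot)$ for one implication and, for the other, the Euclidean boundedness of a small CC-ball (a consequence of the H\"ormander condition) together with the dilation identity $B_X(0,R)=\delta_{R/\rho}(B_X(0,\rho))$, rather than establishing the full two-sided gauge comparison $c\|y\|_\delta\leq d_X(0,y)\leq C\|y\|_\delta$. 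Parts (2) and (4) are identical in both proofs.
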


\begin{proof}
(1)\thinspace\thinspace We first suppose that $B\subseteq\mathbb{R}^{n}$ is
bounded in the Euclidean sense. Since the map $x\mapsto d_{X}(0,x)$ is
continuous in the Euclidean sense (see Remark \ref{rem.continuitysame}) and
$\overline{B}$ is compact, there exists $r>0$ such that
\[
d_{X}(0,x)\leq r\qquad\text{for every $x\in\overline{B}$},
\]
thus $B$ is bounded with respect to $d_{X}$. \vspace*{0.03cm}

Assume now that $B\subseteq\mathbb{R}^{n}$ is $d_{X}$-bounded, and let $R > 0$
be such that that
\[
B\subseteq B_{X}(0,R).
\]
Reminding that $X_{1},\ldots,X_{m}$ satisfy H\"{o}rmander's condition in
$\mathbb{R}^{n}$, we know that there exists a small $\rho>0$ such that
$B_{X}(0,\rho)$ is bounded in Euclidean sense (see, e.g., \cite[Prop.\,7.21]%
{BBBook}); on the other hand, since $X_{i}$'s are also $\delta_{\lambda}%
$-ho\-mo\-ge\-neous of degree $1$, from Remark \ref{rem.dXhomog}-(2) we infer
that
\[
B_{X}(0,R)=\delta_{R/\rho}\left(  B_{X}(0,\rho)\right)  .
\]
Since $\delta_{R/\rho}$ is a (linear) diffeomorphism of $\mathbb{R}^{n}$ we
deduce that $B_{X}(0,R)$ is boun\-ded in the Euclidean sense, and thus the
same is true of $B$. \medskip

(2)\thinspace\thinspace Inequality \eqref{eq.doublingdXneededMem} immediately
follows from \eqref{eq.globalD}. \medskip

(3)\thinspace\thinspace First of all, since $X_{1},\ldots,X_{m}$ satisfy
assumptions (H.1) and (H.2), the fol\-low\-ing \emph{global version} of a
celebrated result by Nagel, Stein and Wainger \cite{NSW} holds true (see
\cite[Thm.\,B]{BiBoBra1}): there exist $c_{1},c_{2}>0$ such that
\begin{equation}
\label{eq.globalNSWBX}c_{1}\sum_{k=n}^{q}f_{k}(x)r^{k}\leq|B_{X}(x,r)|\leq
c_{2}\sum_{k=n}^{q} f_{k}(x)r^{k}\qquad\forall\,\,x\in\mathbb{R}^{n},\,r>0,
\end{equation}
where, for any $k\in\{n,\ldots,q\}$, the function $f_{k}:\mathbb{R}%
^{n}\rightarrow\mathbb{R}$ is continuous, non-ne\-ga\-ti\-ve and
$\delta_{\lambda}$-homogeneous of degree $q-k$; in particular, $f_{q}(x)$ is
constant in $x$ and strictly positive. As a consequence, setting
$\omega:=f_{q}>0$, from \eqref{eq.globalNSWBX} we get
\eqref{eq.lowerboundvolMem}. \medskip

(4)\thinspace\thinspace By making use of \eqref{eq.lowerboundvolMem}, and
reminding that $q\geq n$, we have
\[
|B_{X}(x,r)|\geq\omega\,r^{q}=\omega\,r^{n}\,r^{q-n}\geq\omega\,\sigma
^{q-n}\,r^{n},
\]
for every $x\in\mathbb{R}^{n}$ and $r\geq\sigma>0$. This gives
\eqref{eq.lowerboundvolMemnn}, and the proof is complete.
\end{proof}

Thought not explicit stated, there is another key property concerning $d_{X}$
which is repeatedly exploited in \cite{BBLUMemoir}: for every fixed
\emph{$x\in\mathbb{R}^{n}$} it holds that
\begin{equation}
\label{eq.summexpdX}y\mapsto e^{-d_{X}(x,y)^{2}}\in L^{1}(\mathbb{R}^{n}).
\end{equation}
While in \cite{BBLUMemoir} this is an immediate consequence of
\eqref{eq.dXgreaterdE} (which, in its turn, follows from assumption
\eqref{eq.assumptionMemoir}), in our context we need to prove
\eqref{eq.summexpdX} directly.

\begin{lemma}
\label{lem.sumexpdXhom} For every $p\geq1$ and every fixed $x\in\mathbb{R}%
^{n}$, we have
\[
\mathbf{e}_{x}(y):=e^{-d_{X}(x,y)^{2}}\in L^{p}(\mathbb{R}^{n}).
\]

\end{lemma}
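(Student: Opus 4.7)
The plan is to integrate $e^{-p\,d_X(x,y)^2}$ by decomposing $\mathbb{R}^n$ into the $d_X$-annular regions
\[
A_0 := B_X(x,1), \qquad A_k := \{y\in\mathbb{R}^n : 2^{k-1}\leq d_X(x,y) < 2^k\} \quad (k\geq 1),
\]
and estimating $\int_{A_k} e^{-p\,d_X(x,y)^2}\,dy$ by the maximum of the integrand on $A_k$ times $|B_X(x,2^k)|$. This reduces the problem to a quantitative control of the volume growth of $d_X$-balls as the radius tends to infinity, which is exactly what the global doubling property of Proposition \ref{Prop global doubling} provides.

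First, I would note that by Proposition \ref{thm.propdXnostre}-(1), $B_X(x,1)$ is Euclidean-bounded and hence has finite Lebesgue measure, so $\int_{A_0} e^{-p\,d_X(x,y)^2}\,dy \leq |B_X(x,1)| < \infty$. For $k\geq 1$, I would apply the right-hand inequality in \eqref{eq.globalD} with $r=2^k$ and $\rho=1$, which yields
\[
|B_X(x,2^k)| \leq \gamma_2\,2^{kq}\,|B_X(x,1)|,
\]
and combine it with the obvious pointwise bound $e^{-p\,d_X(x,y)^2}\leq e^{-p\,4^{k-1}}$ on $A_k$ to obtain
\[
\int_{A_k} e^{-p\,d_X(x,y)^2}\,dy \leq \gamma_2\,|B_X(x,1)|\,2^{kq}\,e^{-p\,4^{k-1}}.
\]

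Summing over $k\geq 0$, the resulting series converges since the double-exponential decay $e^{-p\,4^{k-1}}$ dominates the polynomial growth $2^{kq}$, giving $\mathbf{e}_x \in L^p(\mathbb{R}^n)$. There is no real obstacle here: the only delicate points are (i) ensuring that $|B_X(x,1)|$ is finite (handled by Proposition \ref{thm.propdXnostre}-(1), which in turn rests on the topological equivalence of $d_X$ with the Euclidean distance, Remark \ref{rem.continuitysame}) and (ii) invoking the correct form of the doubling property, namely the upper bound in \eqref{eq.globalD}, to control $|B_X(x,2^k)|$ by a polynomial in $2^k$. Both are already established in the excerpt.
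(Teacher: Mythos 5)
Your proof is correct, and it follows a genuinely different (though closely related) route from the paper's. The paper first reduces to proving integrability on $\mathbb{R}^n\setminus B_X(0,1)$, then uses the triangle inequality $d_X(0,y)^2\leq 2d_X(x,y)^2+2d_X(0,x)^2$ to bound $\mathbf{e}_x$ by a constant times $e^{-\frac12 d_X(0,y)^2}$, and finally decomposes $\mathbb{R}^n\setminus B_X(0,1)$ into dyadic annuli \emph{centered at the origin} so as to exploit the explicit $\delta_\lambda$-homogeneity of $d_X$ via the change of variables $y=\delta_{2^k}(u)$, whose Jacobian is $2^{kq}$. You instead work with dyadic annuli centered at $x$ and replace the dilation change of variables by the upper bound in the global doubling inequality \eqref{eq.globalD}, which gives the same $2^{kq}$ volume growth for balls about an arbitrary center. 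This buys you two simplifications: you avoid the triangle-inequality reduction to the case $x=0$, and you avoid the explicit change of variables. The cost is that you lean on Proposition \ref{Prop global doubling} (itself ultimately derived from the same homogeneity structure) rather than on the homogeneity directly; either way, the only serious input is the polynomial volume growth $|B_X(x,2^k)|\lesssim 2^{kq}$, which both proofs exploit identically against the Gaussian decay $e^{-p\,4^{k-1}}$. Both arguments are valid and essentially equivalent in depth; yours is arguably slightly more streamlined.
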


\begin{proof}
We first observe, since $0<\mathbf{e}_{x}\leq1$ on $\mathbb{R}^{n}$, we
obviously have $\mathbf{e}_{x}\in L_{\operatorname{loc}}^{p}(\mathbb{R}^{n})$.
Thus, reminding that any $d_{X}$-ball is bounded in the Euclidean sense (as we
know from Theo\-rem \ref{thm.propdXnostre}), to prove the lemma it suffices to
demonstrate that
\begin{equation}
\label{eq.summexpdinfinity}\mathbf{e}_{x}\in L^{p}(\mathbb{R}^{n}\setminus
B_{X}(0,1)).
\end{equation}
To establish \eqref{eq.summexpdinfinity} we notice that, by triangle's
inequality for $d_{X}$, we have
\[
d_{X}(0,y)^{2}\leq2d_{X}(x,y)^{2}+2d_{X}(0,x)^{2},
\]
and thus
\[
\mathbf{e}_{x}(y)\leq e^{d_{X}(0,x)^{2}} \cdot e^{-\frac{1}{2}d_{X}(0,y)^{2}}
= c_{x}\,e^{-\frac{1}{2}\,d_{X}(0,y)^{2}}\quad\forall\,\,y\in\mathbb{R}^{n}.
\]
As a consequence, since the function $y\mapsto d_{X}(0,y)$ is $\delta
_{\lambda}$-homogeneous of degree $1$ (see Remark \ref{rem.dXhomog}), we
obtain the following computation:
\begin{align*}
\int_{\mathbb{R}^{n}\setminus B_{X}(0,1)}\mathbf{e}_{x}^{p}\,\left(  y\right)
dy  &  \leq c_{x}^{p}\,\int_{\{y:\,d_{X}(0,y)\geq1\}}e^{-\frac{p}{2}%
\,d_{X}(0,y)^{2}}\,dy\\
&  =c_{x}^{p}\,\sum_{k=0}^{\infty}\int_{\{y:\,2^{k}\leq d_{X}(0,y)<2^{k+1}\}}
e^{-\frac{p}{2}\,d_{X}(0,y)^{2}}\,dy\\
&  (\text{using the change of variable $y=\delta_{2^{k}}(u)$})\\
&  =c_{x}^{p}\sum_{k=0}^{\infty} \int_{\{u:\,1\leq d_{X}(0,u)<2\}}%
e^{-2^{2k-1}\,p\,d_{X}(0,u)^{2}}\cdot2^{kq}\,du\\
&  \leq c_{x}^{p}\,|B_{X}(0,2)|\cdot\sum_{k=0}^{\infty}2^{kq}\,e^{-2^{2k-1}%
\,p} <\infty,
\end{align*}
where $q\geq n$ is as in assumption (H.1).
\end{proof}

\subsection{General properties of the $d_{X}$-Gaussian function}

\label{subsec.dXGauss} The last ingredient for the Levi method in
\cite{BBLUMemoir} is the validity of several `structural' properties for the
Gaussian-type function $\mathbf{E}$ in \eqref{eq.defdXGaussE} (see
Prop.\,10.11 and Cor.\,10.12 in \cite{BBLUMemoir}). The next proposition shows
that all the needed properties are satisfied also in our context.

\begin{proposition}
\label{thm.propertiesGdX} Keeping the assumptions and notation of Theorem
\ref{thm.thmVariableCoeff}, and letting $\mathbf{E}=\mathbf{E}(x,y,t)$ denote
the Gaussian-type function defined in \eqref{eq.defdXGaussE}, the
fol\-low\-ing facts hold.

\begin{enumerate}
\item There exists a constant $\mathbf{c}>0$ such that
\begin{equation}
\label{eq.estimEbeta}\mathbf{E}(x,y,t)\leq\mathbf{c}\,\beta^{q/2}%
\,\mathbf{E}(x,y,\beta t),
\end{equation}
for every $x,y\in\mathbb{R}^{n}$, $t>0$ and $\beta\geq1$. \vspace*{0.04cm}

\item For every fixed $\mu\geq0$, there exists $\mathbf{c}=\mathbf{c}_{\mu}>0$
such that
\begin{equation}
\label{eq.estimEmudfract}\left(  \frac{d_{X}(x,y)^{2}}{t}\right)  ^{\mu
}\,\mathbf{E}(x,y,\lambda t) \leq\mathbf{c}_{\mu}\,\lambda^{\mu}%
\,\mathbf{E}(x,y,2\lambda t),
\end{equation}
for every $x,y\in\mathbb{R}^{n}$, $t>0$ and $\lambda>0$. \vspace*{0.04cm}

\item For every fixed $\varepsilon>0$ and $\mu\geq0$, there exists
$\mathbf{c}=\mathbf{c}_{\mu,\varepsilon}>0$ such that
\begin{equation}
\label{eq.estimEmut}t^{-\mu}\,\mathbf{E}(x,y,t)\leq\mathbf{c}_{\mu
,\varepsilon},
\end{equation}
for every $x,y\in\mathbb{R}^{n}$ and $t>0$ satisfying $d(x,y)^{2}+t
\geq\varepsilon$. \vspace*{0.04cm}

\item There exists a constant $\mathbf{c}_{0}>0$ such that, for every $T>0$,
one has
\begin{equation}
\label{eq.estimEexpd}\mathbf{E}(x,y,t)\,\exp\left(  \mu d_{X}(0,y)^{2}\right)
\leq\mathbf{c}_{0}\mathbf{E}(x,y,2t)\,\exp\left(  2\mu d_{X}(0,x)^{2}\right)
,
\end{equation}
for every $x,y\in\mathbb{R}^{n}$, $0<t\leq T$ and $0\leq\mu\leq1/(4T)$.
\vspace*{0.04cm}

\item For every $\kappa_{1},\,\kappa_{2}>0$, there exist $\kappa_{0}%
,\,\Theta>0$ such that
\begin{equation}
\label{eq.reprodE}\int_{\mathbb{R}^{n}}\mathbf{E}(x,\zeta,\kappa
_{1}t)\,\mathbf{E} (\zeta,y,\kappa_{2}t)\,d\zeta\leq\Theta\,\mathbf{E}%
(x,y,\kappa_{0}t),
\end{equation}
for every $x,y\in\mathbb{R}^{n}$ and $t>0$. \vspace*{0.04cm}

\item There exists a constant $\boldsymbol{\sigma}>0$ such that
\begin{equation}
\label{eq.integralEbound}\int_{\mathbb{R}^{n}}\mathbf{E}(x,y,\kappa t)\,d
y\leq\boldsymbol{\sigma},
\end{equation}
for every $x\in\mathbb{R}^{n}$ and $t,\,\kappa>0$.
\end{enumerate}
\end{proposition}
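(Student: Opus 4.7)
The plan is to derive each of the six assertions from just three tools already at hand: the global doubling inequality \eqref{eq.globalD}, the ball-swap estimate \eqref{eq.symmHxy}, and elementary pointwise bounds for the functions $s^\mu e^{-s}$. There are two logical dependencies: the reproduction-type inequality (5) will use the total-mass bound (6), and (4) uses (1) implicitly via doubling. Accordingly I would first dispatch (1) and (6), then (2), (3), (4), and finally (5).

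Item (1) is immediate: for $\beta\geq 1$ we have $e^{-d_X(x,y)^2/t}\leq e^{-d_X(x,y)^2/(\beta t)}$, while doubling gives $|B_X(x,\sqrt{\beta t})|\leq \gamma_2\beta^{q/2}|B_X(x,\sqrt{t})|$. Item (6) I would prove by a dyadic decomposition of $\mathbb{R}^n$ into $B_X(x,\sqrt{\kappa t})$ and the annuli $\{2^k\sqrt{\kappa t}\leq d_X(x,\cdot)<2^{k+1}\sqrt{\kappa t}\}$; the inner ball contributes $1$, while the $k$-th annulus contributes at most $\gamma_2\,2^{(k+1)q}e^{-4^k}$ after applying doubling to its volume, and summing gives a finite constant independent of $x,t,\kappa$. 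Items (2) and (4) are one-line manipulations: (2) uses $s^\mu e^{-s}\leq c_\mu e^{-s/2}$ with $s=d_X(x,y)^2/(\lambda t)$ and doubling to switch the ball-radius from $\sqrt{\lambda t}$ to $\sqrt{2\lambda t}$; (4) combines the triangle inequality $d_X(0,y)^2\leq 2d_X(0,x)^2+2d_X(x,y)^2$ with the hypothesis $\mu\leq 1/(4T)\leq 1/(4t)$, so that $2\mu\,d_X(x,y)^2$ exactly absorbs half of the Gaussian decay $d_X(x,y)^2/t$. For (3), I would split at $t=\varepsilon/2$: on $\{t\leq\varepsilon/2\}$ the hypothesis forces $d_X(x,y)^2\geq\varepsilon/2$, and Proposition \ref{thm.propdXnostre}(3) gives $|B_X(x,\sqrt{t})|^{-1}\leq \omega^{-1}t^{-q/2}$, so the decay $e^{-\varepsilon/(2t)}$ beats any polynomial in $1/t$; on $\{t\geq\varepsilon/2\}$ the lower bound $|B_X(x,\sqrt{t})|\geq c(\varepsilon/2)^{n/2}$ from Proposition \ref{thm.propdXnostre}(4) makes $\mathbf{E}$ itself bounded while $t^{-\mu}\leq(2/\varepsilon)^\mu$.

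The work concentrates in (5), which I expect to be the main obstacle. The strategy is to apply the triangle inequality $d_X(x,y)^2\leq 2\bigl(d_X(x,\zeta)^2+d_X(\zeta,y)^2\bigr)$ inside the exponents, so that the product of exponentials splits as
\[
\exp\!\Bigl(-\tfrac{d_X(x,\zeta)^2}{\kappa_1t}-\tfrac{d_X(\zeta,y)^2}{\kappa_2t}\Bigr)\leq \exp\!\Bigl(-\tfrac{d_X(x,y)^2}{4\kappa t}\Bigr)\cdot\exp\!\Bigl(-\tfrac{d_X(x,\zeta)^2}{2\kappa_1t}-\tfrac{d_X(\zeta,y)^2}{2\kappa_2t}\Bigr),
\]
with $\kappa=\max(\kappa_1,\kappa_2)$. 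The first factor will eventually provide $\mathbf{E}(x,y,\kappa_0 t)$ with $\kappa_0=4\kappa$; for the remaining integral I split the domain into $A_1=\{d_X(x,\zeta)\geq d_X(x,y)/2\}$ and its complement, which lies in $\{d_X(\zeta,y)\geq d_X(x,y)/2\}$. On each region one of the factors $\mathbf{E}(x,\zeta,\cdot)$ or $\mathbf{E}(\zeta,y,\cdot)$ is pointwise bounded by a fixed $x$- or $y$-centered Gaussian, while the other integrates to a universal constant by item (6). The delicate point is that (6) requires an integral of $\mathbf{E}$ centered at a fixed point, whereas after integration the leftover volume factor is centered at $\zeta$; here I would invoke \eqref{eq.symmHxy} to replace the center $\zeta$ by $x$ or $y$ at the cost of another multiplicative constant and a mild worsening of the exponent. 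The bookkeeping of constants—coordinating the factor $4\kappa$ with the dilation in \eqref{eq.symmHxy} and the doubling constant—is the main technical difficulty; once this is carried out, everything collapses into the asserted inequality with an explicit $\kappa_0$ and $\Theta$ depending only on $\kappa_1,\kappa_2,q$.
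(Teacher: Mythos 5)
Your proof is correct, and for items (1)--(4) it follows essentially the same elementary manipulations the paper uses (doubling \eqref{eq.globalD}, the triangle inequality, and $\sup_\tau \tau^\mu e^{-\tau/2}<\infty$; your use of Proposition \ref{thm.propdXnostre}(4) in place of Proposition \ref{thm.propdXnostre}(3) in the case $t\geq\varepsilon/2$ of item (3) is an immaterial variant).

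Where you genuinely diverge from the paper is in items (5) and (6). The paper proves both by comparison with the constant-coefficient heat kernel $\Gamma_{(t_0,x_0)}$: the two-sided bound \eqref{eq.uniformGaussianFrozenEE} sandwiches $\mathbf{E}$ between scaled copies of $\Gamma_{(t_0,x_0)}$, after which (6) drops out of the normalization $\int_{\mathbb{R}^n}\Gamma_{(t_0,x_0)}(\alpha t,x;0,y)\,dy=1$ (Theorem \ref{thm.existenceGammaprop}(d)), and (5) drops out of the reproduction identity \eqref{reproduction prop Gamma cost}. You instead stay at the level of purely metric-measure estimates: a dyadic-annulus decomposition plus doubling for (6), and for (5) the triangle-inequality split of the exponent, the ball-swap \eqref{eq.symmHxy} to replace $|B_X(\zeta,\cdot)|$ by a ball centered at a fixed point, and then item (6). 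This is self-contained and does not invoke any of the heat-kernel apparatus of Section \ref{sec.constantCoeff}, at the cost of more explicit bookkeeping; the paper's route is shorter once Theorems \ref{thm.existenceGammaprop}--\ref{thm.mainParabolicCase} are available and makes the probabilistic content of (5)--(6) transparent.

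One small remark on your plan for (5): the $A_1/A_1^c$ case distinction is not needed once you have pulled out $\exp(-d_X(x,y)^2/(4\kappa t))$ via the exponential split. After that step you may simply bound one of the two remaining Gaussians by $1$, use \eqref{eq.symmHxy} to move the $\zeta$-centered ball to a fixed center, and integrate the surviving Gaussian using (6); the $d_X(x,y)$-decay has already been secured. This streamlines the bookkeeping of constants you were concerned about.
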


\begin{proof}
(1)\thinspace\thinspace Since $\beta\geq1$, by using
\eqref{eq.doublingdXneededMem} we have
\begin{align*}
\mathbf{E}(x,y,t)  &  =\frac{|B_{X}(x,\sqrt{\beta t})|} {|B_{X}(x,\sqrt{t})|}
\cdot\frac{1}{|B_{X}(x,\sqrt{\beta t})|}\, \exp\left(  -\frac{d_{X}(x,y)^{2}%
}{t}\right) \\
&  \leq\gamma\beta^{q/2}\,\frac{1}{|B_{X}(x,\sqrt{\beta t})|}\, \exp\left(
-\frac{d_{X}(x,y)^{2}}{t}\right) \\
&  \leq\gamma\beta^{q/2}\,\mathbf{E}(x,y,\beta t),
\end{align*}
and this is \eqref{eq.estimEbeta}. \medskip

(2)\thinspace\thinspace Since $\mu\geq0$, we have
\[
M_{\mu}:=\sup_{\tau\in\lbrack0,\infty)}\tau^{\mu}\,e^{-\tau/2}\in(0,\infty).
\]
As a consequence, taking $\tau:=d_{X}(x,y)^{2}/(\lambda t)\geq0$, we obtain
\begin{align*}
\left(  \frac{d_{X}(x,y)^{2}}{t}\right)  ^{\mu}\,  &  \mathbf{E}(x,y,\lambda
t)=\frac{\lambda^{\mu}} {|B_{X}(x,\sqrt{\lambda t})|}\,\tau^{\mu}\,e^{-\tau}
\leq M_{\mu}\,\frac{\lambda^{\mu}}{|B_{X}(x,\sqrt{\lambda t})|}\,e^{-\tau
/2}\\[0.08cm]
&  (\text{using \eqref{eq.doublingdXneededMem} with $r=\sqrt{\lambda t}$ and
$M=\sqrt{2}$})\\
&  \leq\gamma\,2^{q/2}M_{\mu}\,\frac{\lambda^{\mu}} {|B_{X}(x,\sqrt{2\lambda
t})|} \,e^{-\tau/2}=\mathbf{c}_{\mu}\,\lambda^{\mu} \,\mathbf{E}(x,y,2\lambda
t),
\end{align*}
which is \eqref{eq.estimEmudfract}. \medskip

(3)\,\,In order to prove \eqref{eq.estimEmut}, we distinguish two cases.

\begin{itemize}
\item[(i)] $t>\varepsilon/2$. In this case, using \eqref{eq.lowerboundvolMem}
and the definition of $\mathbf{E}$, we get
\begin{align*}
t^{-\mu}\,\mathbf{E}(x,y,t)  &  \leq\frac{1}{\omega} \,t^{-\mu-q/2}%
\,\exp\left(  -\frac{d_{X}(x,y)^{2}}{t}\right)  \leq\frac{1}{\omega}%
\,t^{-\mu-q/2}\\
&  \leq\frac{1}{\omega}\,(\varepsilon/2)^{-\mu-q/2} =:\mathbf{c}%
_{\mu,\varepsilon}^{(1)}.
\end{align*}

\item[(ii)] $0<t\leq\varepsilon/2$. In this case, reminding that
$d_{X}(x,y)^{2}+t\geq\varepsilon$, we get
\[
d_{X}(x,y)^{2}\geq\varepsilon/2.
\]
From this, using again \eqref{eq.lowerboundvolMem} and the definition of
$\mathbf{E}$, we obtain
\begin{align*}
t^{-\mu}\,\mathbf{E}(x,y,t)  &  \leq\frac{1}{\omega}\,t^{-\mu-q/2}\,
\exp\left(  -\frac{d(x,y)^{2}}{t}\right)  \leq\frac{1}{\omega}\,t^{-\mu-q/2}
\,e^{-\varepsilon/(2t)}\\
&  \leq\frac{1}{\omega}\,\sup_{\tau\geq0}\big( \tau^{-\mu-q/2}\,
e^{-\varepsilon/(2\tau)}\big) =:\mathbf{c}_{\mu,\varepsilon}^{(2)}.
\end{align*}

\end{itemize}

Collecting the two cases, we infer that \eqref{eq.estimEmut} holds with
$\mathbf{c}_{\mu,\varepsilon}:= \max\{\mathbf{c}_{\mu,\varepsilon}%
^{(1)},\mathbf{c}_{\mu,\varepsilon}^{(2)}\}$. \medskip

(4)\thinspace\thinspace First of all, using triangle's inequality for the
distance $d_{X}$, we have
\[
d_{X}(0,y)^{2}\leq2d_{X}(x,y)^{2}+2d_{X}(0,x)^{2}\qquad\forall\,\,x,y\in
\mathbb{R}^{n};
\]
as a consequence, for every $x,y\in\mathbb{R}^{n}$ and every $t>0$ we obtain
\begin{align*}
&  \exp\left(  \mu d_{X}(0,y)^{2}\right)  \,\mathbf{E}(x,y,t)\\
&  \quad\leq\exp\left(  2\mu d_{X}(0,x)^{2}\right)  \cdot\frac{1}%
{|B_{X}(x,\sqrt{t})|}\,\exp\left(  d_{X}(x,y)^{2}(2\mu-1/t)\right) \\
&  \quad(\text{using \eqref{eq.doublingdXneededMem} with $r=\sqrt{t}$ and
$M=\sqrt{2}$})\\[0.1cm]
&  \quad\leq\gamma\,2^{q/2}\,\exp\left(  2\mu d_{X}(0,x)^{2}\right)
\cdot\frac{1}{|B_{X}(x,\sqrt{2t})|}\,\exp\left(  d_{X}(x,y)^{2}(2\mu
-1/t)\right)  =(\bigstar).
\end{align*}
We now observe that, if $T>0$ is arbitrarily fixed, one has
\begin{equation}
\label{eq.estimexpmudelta}2\mu-\frac{1}{t}\leq\frac{1}{2T}-\frac{1}{t}%
\leq-\frac{1}{2t},
\end{equation}
for every $0<t\leq T$ and $0\leq\mu\leq1/(4T)$; using
\eqref{eq.estimexpmudelta}, we then get
\begin{align*}
(\bigstar)  &  \leq\gamma\,2^{q/2}\,\exp\left(  2\mu d_{X}(0,x)^{2}\right)
\cdot\frac{1}{|B_{X}(x,\sqrt{2t})|}\,\exp\left(  -\frac{d_{X}(x,y)^{2}}%
{2t}\right) \\[0.1cm]
&  =\mathbf{c}_{0}\exp\left(  2\mu d_{X}(0,x)^{2}\right)  \,\mathbf{E}%
(x,y,2t),
\end{align*}
which is \eqref{eq.estimEexpd}. \medskip

(5)\thinspace\thinspace We first observe that, setting $\widehat{\kappa}
:=\max\{\kappa_{1},\kappa_{2}\}$, we deduce from \eqref{eq.estimEbeta} that
there exists a constant $\gamma=\gamma_{\kappa_{1},\kappa_{2}}>0$ such that
\begin{equation}
\label{eq.estimEiEkappazero}\mathbf{E}(x,\zeta,\kappa_{i}t)\leq\gamma
\,\mathbf{E}(x,\zeta,\widehat{\kappa}t) \qquad(i = 1,2)
\end{equation}
for every $x,\zeta\in\mathbb{R}^{n}$ and $t>0$. On the other hand, if
$(t_{0},x_{0})\in\mathbb{R}^{1+n}$ is arbitrarily fixed, we can exploit
estimate \eqref{eq.uniformGaussianFrozenEE}: there exist $\nu_{\Lambda
},\,\kappa_{\Lambda}>0$ such that, for any $x,y,\zeta\in$\emph{$\mathbb{R}%
^{n}$ }and any $t>0$, one has
\begin{equation}
\label{eq.estimEkappazeroGamma}%
\begin{split}
&  \mathbf{E}(x,\zeta,\widehat{\kappa}t)\leq\nu_{\Lambda}\, \Gamma
_{(t_{0},x_{0})}(\alpha t,x;0,\zeta)\quad\text{and}\\[0.1cm]
&  \mathbf{E}(\zeta,y,\widehat{\kappa}t)\leq\nu_{\Lambda}\, \Gamma
_{(t_{0},x_{0})}(\alpha t,\zeta;0,y),
\end{split}
\end{equation}
where $\alpha:=\widehat{\kappa}\cdot\kappa_{\Lambda}>0$ and $\Gamma
_{(t_{0},x_{0})}$ is the global heat kernel of the constant coefficient
operator \eqref{eq.OpFrozen}. Gathering
\eqref{eq.estimEiEkappazero}-\eqref{eq.estimEkappazeroGamma}, and taking into
account the `reproduction pro\-per\-ty' of $\Gamma_{(t_{0},x_{0})}$ (see
\eqref{reproduction prop Gamma cost}), we obtain
\begin{align*}
&  \int_{\mathbb{R}^{n}}\mathbf{E}(x,\zeta,\kappa_{1}t)\, \mathbf{E}%
(\zeta,y,\kappa_{2}t)d\zeta\\
&  \quad\leq(\gamma\nu_{\Lambda})^{2}\int_{\mathbb{R}^{n}}\Gamma_{(t_{0}%
,x_{0})} (\alpha t,x;0,\zeta)\,\Gamma_{(t_{0},x_{0})}(\alpha t,\zeta
;0,y)\,d\zeta\\
&  \quad(\text{since $\Gamma_{(t_{0},x_{0})}(\alpha t,\zeta;0,y)=
\Gamma_{(t_{0},x_{0})}(0,\zeta;-\alpha t,y)$, see Theorem
\ref{thm.existenceGammaprop}-(b)})\\
&  \quad= (\gamma\nu_{\Lambda})^{2}\int_{\mathbb{R}^{n}}\Gamma_{(t_{0},x_{0})}
(\alpha t,x;0,\zeta)\,\Gamma_{(t_{0},x_{0})}(0,\zeta;-\alpha t,y)\,d\zeta\\
&  \quad=c\,\Gamma_{(t_{0},x_{0})}(\alpha t,x,-\alpha t,y)=:(\bigstar),
\end{align*}
where $c:=(\gamma\nu_{\Lambda})^{2}$. Finally, using once again estimate
\eqref{eq.uniformGaussianFrozenEE} we get
\[
(\bigstar)\leq(c\,\nu_{\Lambda})\,\mathbf{E}(x,y,2\alpha\kappa_{\Lambda}t),
\]
and this gives \eqref{eq.reprodE}. \medskip

(6)\thinspace\thinspace If $(t_{0},x_{0})\in\mathbb{R}^{1+n}$ is
ar\-bi\-tra\-rily fixed, we know from estimate
\eqref{eq.uniformGaussianFrozenEE} that there exist constants $\nu_{\Lambda
},\,\kappa_{\Lambda}>0$ such that, for any $x,y\in\mathbb{R}^{n}$ and $t>0$,
one has
\[
\mathbf{E}(x,y,\kappa t)\leq\nu_{\Lambda}\Gamma_{(t_{0},x_{0})}(\alpha
t,x;0,y),
\]
where $\alpha:=\kappa\cdot\kappa_{\Lambda}$. As a consequence, by Theorem
\ref{thm.existenceGammaprop}-(d) we have
\[
\int_{\mathbb{R}^{n}}\mathbf{E}(x,y,\kappa t)\,d y \leq\nu_{\Lambda}
\int_{\mathbb{R}^{n}}\Gamma_{(t_{0},x_{0})}(\alpha t,x;0,y)\,d y=\nu_{\Lambda
},
\]
and this is exactly \eqref{eq.integralEbound}.
\end{proof}

We conclude this section with the

\begin{proof}
[Proof of Theorem \ref{thm.thmVariableCoeff}]The proof follows \emph{verbatim}
the arguments in \cite[Part II]{BBLUMemoir}, using Theorem
\ref{thm.mainParabolicCase}, Corollary \ref{cor.estimconE}, Proposition
\ref{thm.propdXnostre}, Lemma \ref{lem.sumexpdXhom}, Proposition
\ref{thm.propertiesGdX}.
\end{proof}

\section{Scale-invariant Harnack inequalities}

\label{sec.HarnackH} The aim of this last section is to prove a \emph{scale
invariant} Harnack inequality for the variable coefficient operator
$\mathcal{H}$ introduced in \eqref{H var}, that is,
\[
\mathcal{H}=\sum_{i,j=1}^{m}a_{i,j}(t,x)X_{i}X_{j}-\partial_{t}.
\]
In dealing with parabolic differential operators, the beautiful connection
between Harnack-type inequalities and the availability of two-sided Gaussian
bounds for the associated heat kernel was firstly pointed out by Nash
\cite{Nash}. Twenty years later, the approach of Nash was rigorously
implemented by Fabes and Stroock \cite{FabStr}, also inspired by some ideas of
Krylov and Safonov, see \cite{KS}.

As already explained in the Introduction, however, here we do not prove the
Harnack inequality for $\mathcal{H}$ by using the global two-sides Gaussian
estimates of its associated heat kernel $\Gamma$, since a much simpler
approach is possible in our context. Namely, we derive our result from its
analog proved in \cite{BoUg} in Carnot groups, using the global lifting result
in Theorem \ref{thm.liftingGroupG}. \medskip

Throughout the sequel, we tacitly inherit all the definitions and notation
in\-tro\-du\-ced so far. In particular, $X_{1},\ldots,X_{m}$ are smooth vector
fields in $\mathbb{R}^{n}$ sa\-ti\-sfy\-ing assumptions (H.1), (H.2), (H.3),
and $\mathbb{G},\,\widehat{X}:=\{\widehat{X}_{1},\ldots,\widehat{X}_{m}\}$ are
as in Theorem \ref{thm.liftingGroupG}. We remind that $\mathbb{G}$ is a Carnot
group whose underlying manifold is $\mathbb{R}^{N}$, where
\[
\text{$N=\mathrm{dim}\big( \mathrm{Lie}(X_{1},\ldots,X_{m})\big) =n+p$
\quad(for some $p\geq1$)}.
\]
Accordingly, we denote the points $z\in\mathbb{R}^{N}$ by
\[
z=(x,\xi),\quad\text{with $x\in\mathbb{R}^{n}$ and $\xi\in\mathbb{R}^{p}$}.
\]
We then introduce the following function space.

\begin{definition}
\label{def.spaceC2} Let $\Omega\subseteq\mathbb{R}^{1+n}$ be an open set. We
define $\mathfrak{C}_{X}^{2}(\Omega)$ as the space of functions $u:\Omega
\rightarrow\mathbb{R}$ satisfying the following properties:

\begin{enumerate}
\item $u$ is continuous on $\Omega$;

\item the map $u(t,\cdot)$ has intrinsic-derivatives along the $X_{i}$'s at
every point of its domain, and $X_{i}u(t,\cdot)$ is continuous for fixed $t$;

\item the map $u(\cdot,x)$ has derivative with respect to $t$, and
$\partial_{t} u(\cdot,x)$ is continuous for fixed $x$;

\item for every fixed $1\leq i\leq m$, the map $X_{i}u(t,\cdot)$ has
intrinsic-de\-ri\-va\-ti\-ves along the $X_{j}$'s at every point of its
domain, and $X_{j}X_{i}u(t,\cdot)$ is continuous for fixed $t$.
\end{enumerate}
\end{definition}

We can now state the main result of this section:

\begin{theorem}
[Parabolic Harnack inequality]\label{thm.Harnack} Let the assumptions and the
no\-ta\-tion of Theorem \ref{thm.thmVariableCoeff} do apply. Moreover, let
\[
\text{$r_{0}>0,\,0<h_{1} <h_{2}<1$ and $\gamma\in(0,1)$}%
\]
be fixed. Then, there exists a constant $M>0$, only depending on $r_{0}%
,h_{1},h_{2}$ and $\gamma$, such that, for every $(t_{0},x_{0})\in
\mathbb{R}^{1+n}$, $r\in(0,r_{0}]$ and
\[
u\in\mathfrak{C}_{X}^{2}\big(  (t_{0}-r^{2},\,t_{0})\times B_{X}%
(x_{0},r)\big)
\cap C\big(  [t_{0}-r^{2},\,t_{0}]\times\overline{B_{X}(x_{0},r)}\big)
\]
satisfying $\mathcal{H}u=0$ and $u\geq0$ on $(t_{0}-r^{2},t_{0})\times
B_{X}(x_{0},r)$, we have
\begin{equation}
\label{eq.HarnackH}\sup_{(t_{0}-h_{2}r^{2},\,t_{0}-h_{1}r^{2})\times
B_{X}(x_{0},\gamma r)} u \leq M\,u(t_{0},x_{0}).
\end{equation}

\end{theorem}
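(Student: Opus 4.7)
The strategy, as anticipated in the text, is to reduce to the known scale-invariant parabolic Harnack inequality on Carnot groups proved by Bonfiglioli-Uguzzoni in \cite{BoUg}, via the global lifting provided by Theorem \ref{thm.liftingGroupG}. Fix $u$ as in the statement and set
\[
\widehat{u}(t,z)=\widehat{u}(t,x,\xi):=u(t,\pi(z))=u(t,x),
\]
so that $\widehat{u}$ depends only on the base variable $x=\pi(z)$. Since each lifted field takes the form $\widehat{X}_i=X_i+R_i$ where $R_i$ operates only on the $\xi$-variable (see \eqref{lifting}), one has $\widehat{X}_i\widehat{u}(t,z)=(X_iu)(t,x)$ and, iterating, $\widehat{X}_j\widehat{X}_i\widehat{u}(t,z)=(X_jX_iu)(t,x)$; similarly $\partial_t\widehat{u}=\partial_tu$. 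Consequently the lifted operator
\[
\widehat{\mathcal{H}}:=\sum_{i,j=1}^m\widehat{a}_{i,j}(t,z)\widehat{X}_i\widehat{X}_j-\partial_t,
\qquad \widehat{a}_{i,j}(t,x,\xi):=a_{i,j}(t,x),
\]
satisfies $\widehat{\mathcal{H}}\widehat{u}=\mathcal{H}u=0$ wherever $\mathcal{H}u=0$.

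I next verify that $\widehat{\mathcal{H}}$ falls in the class of operators for which the Carnot-group Harnack inequality in \cite{BoUg} applies. The uniform ellipticity of the matrix $\widehat{A}(t,z)$ with the same constant $\Lambda$ is immediate from (ii). For H\"older continuity with respect to the parabolic distance on $\mathbb{G}$, one uses the inequality $d_X(x,y)\le d_{\widehat{X}}((x,\xi),(y,\eta))$ from Remark \ref{rem.liftingdXdhatX}: for any $(t,z),(s,w)\in\mathbb{R}^{1+N}$,
\[
|\widehat{a}_{i,j}(t,z)-\widehat{a}_{i,j}(s,w)|
=|a_{i,j}(t,x)-a_{i,j}(s,y)|
\le C\bigl(d_{\widehat{X}}(z,w)^\alpha+|t-s|^{\alpha/2}\bigr),
\]
so the $\widehat{X}$-H\"older seminorms of the lifted coefficients are controlled by the $X$-H\"older seminorms of the original ones. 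The regularity of $\widehat{u}$ in the Carnot-group analog of $\mathfrak{C}_X^2$ on the lifted cylinder follows directly from the identities $\widehat{X}_i\widehat{u}(t,z)=X_iu(t,x)$, $\widehat{X}_j\widehat{X}_i\widehat{u}(t,z)=X_jX_iu(t,x)$, $\partial_t\widehat{u}(t,z)=\partial_tu(t,x)$, together with the fact that $\widehat{u}$ extends continuously to the closure.

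Fix now any $\xi_0\in\mathbb{R}^p$ and set $z_0:=(x_0,\xi_0)$. By the projection property \eqref{eq.projectionBalls}, we have $B_{\widehat{X}}(z_0,r)\subseteq B_X(x_0,r)\times\mathbb{R}^p$, so $\widehat{u}$ is defined, continuous and non-negative on the lifted parabolic cylinder $(t_0-r^2,t_0)\times B_{\widehat{X}}(z_0,r)$ (and extends continuously to its closure), and $\widehat{\mathcal{H}}\widehat{u}=0$ there. The parabolic Harnack inequality of \cite{BoUg}, applied to $\widehat{u}$ in $\mathbb{G}$ with the same parameters $r_0,h_1,h_2,\gamma$, yields a constant $M>0$ (independent of $(t_0,x_0,\xi_0)$ and $r\in(0,r_0]$, since the structural quantities controlling it---ellipticity and H\"older norm of coefficients---are inherited from the original problem) such that
\[
\sup_{(t_0-h_2r^2,\,t_0-h_1r^2)\times B_{\widehat{X}}(z_0,\gamma r)}\widehat{u}
\le M\,\widehat{u}(t_0,z_0)=M\,u(t_0,x_0).
\]
Invoking once more $\pi\bigl(B_{\widehat{X}}(z_0,\gamma r)\bigr)=B_X(x_0,\gamma r)$ from \eqref{eq.projectionBalls}, for every $(t,x)$ in the cylinder appearing in \eqref{eq.HarnackH} we may pick $\xi\in\mathbb{R}^p$ with $(x,\xi)\in B_{\widehat{X}}(z_0,\gamma r)$, and then $u(t,x)=\widehat{u}(t,x,\xi)\le M\,u(t_0,x_0)$. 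Taking the supremum gives \eqref{eq.HarnackH}.

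The only non-cosmetic obstacle is a bookkeeping one: one must check that the constant $M$ provided by the Carnot-group result in \cite{BoUg} really depends only on the ellipticity constant, the H\"older norm of the coefficients, and the geometric parameters $r_0,h_1,h_2,\gamma$, so that it is insensitive to the auxiliary choice of $\xi_0$ (hence also independent of the base point $(t_0,x_0)$, by left translation on $\mathbb{G}$). Once this is granted, the lifting-and-projection argument above requires no further work.
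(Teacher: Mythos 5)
Your proposal is correct and follows essentially the same route as the paper: lift $u$ via $\widehat{u}(t,z)=u(t,\pi(z))$, check (via the form $\widehat{X}_i = X_i + R_i$ of the lifted fields and the inequality $d_X\le d_{\widehat{X}}$) that the lifted coefficients are H\"older with the same constants and that $\widehat{\mathcal{H}}\widehat{u}=0$ on the lifted cylinder, then invoke the Bonfiglioli--Uguzzoni scale-invariant Harnack inequality on the Carnot group $\mathbb{G}$ and descend using $\pi\bigl(B_{\widehat{X}}(z_0,\gamma r)\bigr)=B_X(x_0,\gamma r)$. The paper simply fixes $\xi_0=0$ rather than an arbitrary $\xi_0$, and packages the derivative identities and the H\"older transfer as two standalone lemmas (Lemmas \ref{lem.regullift} and \ref{lem.liftHolder}), but the mathematical content is the same.
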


\begin{remark}
On account of \cite[Thm.\,14.4]{BBLUMemoir}, if $f\in C_{X,\mathrm{loc}%
}^{\alpha}$ and $u$ is any $\mathfrak{C}^{2}_{X}$-solution of $\mathcal{H}u =
f$, then $u$ actually belongs to $C^{2,\alpha}_{X,\mathrm{loc}}$.
\end{remark}

In order to prove Theorem \ref{thm.Harnack}, we need some preliminary lemmas.

\begin{lemma}
\label{lem.regullift} Let $\Omega\subseteq\mathbb{R}^{1+n}$ be an open set,
and let $u\in\mathfrak{C}_{X}^{2}(\Omega)$. We denote by $\pi:\mathbb{R}^{N}
\equiv\mathbb{R}^{n}\times\mathbb{R}^{p}\rightarrow\mathbb{R}^{n}$ the
projection of $\mathbb{R}^{N}$ onto $\mathbb{R}^{n}$, and we define
\[
v(t,z):=u\left(  t,\pi(z)\right)  .
\]
Then, $v$ belongs to $\mathfrak{C}^{2}_{\widehat{X}}(\Omega\times
\mathbb{R}^{p})$ and, for every $(t,z)\in\Omega\times\mathbb{R}^{p}$, we have
the identities

\begin{itemize}
\item[(a)] $\partial_{t}v(t,z)=(\partial_{t}u)(t,\pi(z))$;

\item[(b)] $\widehat{X}_{i} v(t,z) = (X_{i}u)(t,\pi(z))$ for every $1\leq
i\leq m $;

\item[(c)] $\widehat{X}_{i}\widehat{X}_{j}v(t,z) = (X_{i}X_{j}u)(t,\pi(z))$
for every $1\leq i,j\leq m$.
\end{itemize}
\end{lemma}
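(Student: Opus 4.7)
The plan is to exploit the structural form of the lifted vector fields given in Theorem \ref{thm.liftingGroupG}, namely $\widehat{X}_{i}(x,\xi)=X_{i}(x)+R_{i}(x,\xi)$, where $R_{i}$ operates only on the $\xi$-variables. Since $v(t,z)=u(t,\pi(z))$ is constant in $\xi$, the $R_{i}$-part annihilates $v$, so morally $\widehat{X}_{i}v=X_{i}u\circ\pi$. The work is to make this precise for the \emph{intrinsic} derivatives appearing in Definition \ref{def.spaceC2}, and to check each of the four continuity clauses. Identity (a) is trivial because $\pi$ does not involve $t$ and $\partial_{t}u(\cdot,x)$ exists by hypothesis.

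For (b), I would argue directly from the definition of intrinsic derivative. Fix $(t,z)=(t,x,\xi)\in\Omega\times\mathbb{R}^{p}$ and let $\widehat{\gamma}(h)=(\gamma_{1}(h),\gamma_{2}(h))$ be the integral curve of $\widehat{X}_{i}$ starting at $z$. Because $R_{i}$ acts only in $\xi$, the ODE system decouples in its first block: $\dot{\gamma}_{1}(h)=X_{i}(\gamma_{1}(h))$ with $\gamma_{1}(0)=x$, which is exactly the integral curve of $X_{i}$ through $x$. Hence
\[
\widehat{X}_{i}v(t,z)=\lim_{h\to 0}\frac{v(t,\widehat{\gamma}(h))-v(t,z)}{h}
=\lim_{h\to 0}\frac{u(t,\gamma_{1}(h))-u(t,x)}{h}=(X_{i}u)(t,\pi(z)),
\]
where the last equality uses that the intrinsic derivative $X_{i}u(t,\cdot)$ exists at $x$ by hypothesis on $u$. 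Continuity of $\widehat{X}_{i}v(t,\cdot)$ for fixed $t$ then follows immediately from continuity of $X_{i}u(t,\cdot)$ (clause (2) of Definition \ref{def.spaceC2}) composed with the continuous map $\pi$.

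For (c), I would set $w(t,z):=(X_{i}u)(t,\pi(z))$, which by (b) coincides with $\widehat{X}_{i}v$. The function $w$ has exactly the same ``cylindrical'' structure as $v$ but with $X_{i}u$ in place of $u$. Clause (4) of Definition \ref{def.spaceC2} guarantees that $X_{i}u(t,\cdot)$ admits intrinsic derivatives along every $X_{j}$ (and $X_{j}X_{i}u(t,\cdot)$ is continuous for fixed $t$), so applying the same argument as in (b) to $w$ yields $\widehat{X}_{j}\widehat{X}_{i}v(t,z)=\widehat{X}_{j}w(t,z)=(X_{j}X_{i}u)(t,\pi(z))$, and continuity in $z$ is again inherited from continuity of $X_{j}X_{i}u(t,\cdot)$ via $\pi$.

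Finally, to conclude $v\in\mathfrak{C}^{2}_{\widehat{X}}(\Omega\times\mathbb{R}^{p})$ one checks the remaining regularity requirements from Definition \ref{def.spaceC2}: continuity of $v$ on $\Omega\times\mathbb{R}^{p}$ is immediate from that of $u$ and $\pi$, and continuity of $\partial_{t}v(\cdot,z)$ for fixed $z$ follows from (a) and clause (3) for $u$. I do not anticipate a genuine obstacle; the only subtle point is to argue at the level of intrinsic (rather than Euclidean) derivatives, and this is handled by the decoupling of the first $n$ coordinates of the flow of $\widehat{X}_{i}$, which is an immediate consequence of the form \eqref{lifting}.
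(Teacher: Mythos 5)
Your proof is correct and follows essentially the same route as the paper: the paper's key identity \eqref{eq.projectionExp}, $\pi\big(\exp(s\widehat{X}_i)(z)\big)=\exp(sX_i)(\pi(z))$, is exactly the decoupling of the first $n$ components of the flow that you invoke, and from there both arguments compute the intrinsic derivatives directly from the definition and bootstrap from first to second order by re-running the argument on $(X_iu)\circ\pi$.
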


\begin{proof}
First of all, since $u\in\mathfrak{C}_{X}^{2}(\Omega)$, it is immediate to see that

\begin{itemize}
\item[(i)] $v$ is continuous on $\Omega\times\mathbb{R}^{p}$;

\item[(ii)] for any fixed $z=(x,\xi)$, the map $v(\cdot,z)$ is differentiable
with respect to $t$ at any point of its domain, and (a) holds.
\end{itemize}

In particular, from (a) we recognize that $\partial_{t} v(\cdot,z)$ is
con\-ti\-nuo\-us for every fixed $z$. \vspace*{0.05cm}

Next, let $i,j\in\{1,\ldots,m\}$ be fixed. Since, by Theorem
\ref{thm.liftingGroupG}, we have
\[
\widehat{X}_{i}=X_{i}+\textstyle\sum_{k=1}^{p}r_{k,i}(x,\xi)\partial_{\xi_{k}}%
\]
(for smooth functions $r_{k,i}$), it is easy to recognize that
\begin{equation}
\label{eq.projectionExp}\pi\big(\exp(t\widehat{X}_{i})\big)(z) = \exp
(tX_{i})\big(\pi(z)\big)\qquad\forall\,\,z\in\mathbb{R}^{N},\,t\in\mathbb{R}.
\end{equation}
By \eqref{eq.projectionExp}, and recalling the very definition of $v$, we then
get
\[
v\big(t,\exp(s\widehat{X}_{i})(z)\big)  = u\big(t,\pi\big(
\exp(s\widehat{X}_{i})(z)\big)\big) = u\big(t,\exp(sX_{i})(\pi(z))\big)
\]
(for all $s\in\mathbb{R},\,(t,z)\in\Omega\times\mathbb{R}^{p}$). From this,
since $u\in\mathfrak{C}_{X}^{2}(\Omega)$, we immediately deduce that
$v(t,\cdot)$ has intrinsic-derivative along $\widehat{X}_{i}$, and
\begin{equation}
\label{eq.hatXiXi}%
\begin{split}
\widehat{X}_{i}v\left(  t,z\right)   &  = \frac{d}{ds}\Big|_{s =
0}v\big(t,\exp(s\widehat{X}_{i})(z)\big)\\
&  =\frac{d}{ds}\Big|_{s = 0} u\big(t,\exp(sX_{i})(\pi(z))\big) = \left(
X_{i}u\right)  \left(  t,\pi(z)\right)  .
\end{split}
\end{equation}
Starting from \eqref{eq.hatXiXi}, and using once again
\eqref{eq.projectionExp}, we also have
\begin{align*}
\widehat{X}_{i}v\big(t,\exp(s\widehat{X}_{j})(z)\big)  &  = (X_{i}%
u)\big(t,\pi\big(\exp(s\widehat{X}_{j})(z)\big)\big)\\
&  =(X_{i}u)\big(t,\exp(sX_{i})(\pi(z))\big)
\end{align*}
(for all $s\in\mathbb{R},\,(t,z)\in\Omega\times\mathbb{R}^{p}$); from this,
since $u\in\mathfrak{C}_{X}^{2}(\Omega)$, we infer that $v(t,\cdot)$ has
intrinsic-derivatives {up to second order} along the $\widehat{X}_{k}$'s,
which are given by
\begin{equation}
\label{eq.hatXiXisecond}%
\begin{split}
\widehat{X}_{j}\widehat{X}_{i}v(t,z)  &  =\frac{d}{ds}\Big|_{s = 0}
\widehat{X}_{i}v\big(t,\exp(s\widehat{X}_{j})(z)\big)\\[0.15cm]
&  =\frac{d}{ds}\Big|_{s = 0} (X_{i}u)\big(t,\exp(sX_{i})(\pi(z))\big) =(X_{j}%
X_{i}u)(t,\pi(z)),
\end{split}
\end{equation}
and thus (b)-(c) hold. In particular, from
\eqref{eq.hatXiXi}-\eqref{eq.hatXiXisecond} (and since $u\in\mathfrak{C}%
_{X}^{2}(\Omega)$) we re\-cog\-ni\-ze that $\widehat{X}_{i}v(t,\cdot
),\,\widehat{X}_{j}\widehat{X}_{i}v(t,\cdot) $ are con\-ti\-nuo\-us for all
fixed $t$.
\end{proof}

\begin{lemma}
\label{lem.liftHolder} Let $\Omega\subseteq\mathbb{R}^{1+n}$ be an open set,
and let $f\in{C}_{X}^{\alpha}(\Omega)$ \emph{(}for a suitable $\alpha\in
(0,1)$\emph{)}. We let $\pi:\mathbb{R}^{N}\rightarrow\mathbb{R}^{n}$ be as in
Lemma \ref{lem.regullift}, and we define
\[
\widehat{f}:\Omega\times\mathbb{R}^{p}\rightarrow\mathbb{R},\qquad
\widehat{f}(t,z):=f(t,\pi(z)).
\]
Then, $\widehat{f}$ belongs to ${C}_{\widehat{X}}^{\alpha}(\Omega
\times\mathbb{R}^{p})$, and
\begin{equation}
\label{eq.normhatfHolder}\|\widehat{f}\|_{\alpha,\,\Omega\times\mathbb{R}^{p}}
\leq\|f\|_{\alpha,\,\Omega}.
\end{equation}

\end{lemma}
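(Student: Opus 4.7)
The plan is to reduce the whole statement directly to the definition of the Hölder norms together with the projection inequality
\[
d_X\big(\pi(z),\pi(w)\big) \;\leq\; d_{\widehat{X}}(z,w),
\]
recorded in Remark \ref{rem.liftingdXdhatX}. There is no real analytic work to do: the fact that the $\widehat{X}_i$'s have the lifted form $\widehat{X}_i = X_i + R_i$ is encoded in that inequality, and everything else is just bookkeeping between the two Hölder seminorms.

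First, I would handle the sup-part of $\|\widehat f\|_{\alpha,\Omega\times\mathbb{R}^p}$. Since $\widehat f(t,z) = f(t,\pi(z))$ depends only on $(t,\pi(z))$, and $\pi:\mathbb{R}^N\to\mathbb{R}^n$ is surjective, one has
\[
\sup_{\Omega\times\mathbb{R}^p}|\widehat f| \;=\; \sup_{\Omega}|f|.
\]
Next, for any $(t,z),(s,w)\in\Omega\times\mathbb{R}^p$ with $(t,z)\neq(s,w)$, I would write
\[
|\widehat f(t,z)-\widehat f(s,w)| \;=\; \bigl|f(t,\pi(z))-f(s,\pi(w))\bigr|,
\]
and apply the $C^\alpha_X$-bound for $f$ (with the convention that the quotient is $0$ when $(t,\pi(z))=(s,\pi(w))$) to obtain
\[
|\widehat f(t,z)-\widehat f(s,w)| \;\leq\; \|f\|_{\alpha,\Omega}\Big(d_X\big(\pi(z),\pi(w)\big)^\alpha+|t-s|^{\alpha/2}\Big).
\]

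Finally, I would invoke the projection inequality from Remark \ref{rem.liftingdXdhatX} to replace $d_X(\pi(z),\pi(w))$ by $d_{\widehat X}(z,w)$, which yields
\[
\frac{|\widehat f(t,z)-\widehat f(s,w)|}{d_{\widehat X}(z,w)^\alpha+|t-s|^{\alpha/2}} \;\leq\; \|f\|_{\alpha,\Omega},
\]
and taking the supremum together with the sup-estimate gives \eqref{eq.normhatfHolder}. There is no genuine obstacle: the only non-trivial ingredient is the comparison $d_X\circ(\pi\times\pi)\leq d_{\widehat X}$, which is already at our disposal and follows simply from the fact that any admissible curve for $d_{\widehat X}$ projects via $\pi$ to an admissible curve for $d_X$ of the same length.
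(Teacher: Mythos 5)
Your proof is essentially the same as the paper's: both establish the sup-part via surjectivity of $\pi$, apply the $C^\alpha_X$ estimate for $f$, and then use the projection inequality $d_X(\pi(z),\pi(w))\leq d_{\widehat X}(z,w)$ from Remark \ref{rem.liftingdXdhatX} to pass to $d_{\widehat X}$. Your extra remark about the degenerate case $(t,\pi(z))=(s,\pi(w))$ is a small bit of added care, but the argument is otherwise the same.
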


\begin{proof}
First of all we observe that, since $f\in{C}_{X}^{\alpha}(\Omega)$, one has
\begin{equation}
\label{eq.hatfbd}\sup_{\Omega\times\mathbb{R}^{p}}|\widehat{f}|=\sup_{\Omega
}|f|<\infty.
\end{equation}
Moreover, since we know from Remark \ref{rem.liftingdXdhatX} that
\[
d_{\widehat{X}}(z,w)\geq d_{X}(\pi(z),\pi(w))\qquad\forall\,z,w\in
\mathbb{R}^{N}\equiv\mathbb{R}^{n}\times\mathbb{R}^{p},
\]
we also have, for any $(t,z),\,(s,w)\in\Omega\times\mathbb{R}^{p}$:
\begin{equation}
\label{eq.Holderhatf}%
\begin{split}
|\widehat{f}(t,z)-\widehat{f}(s,w)|  &  = |f(t,\pi(z))-f(s,\pi(w))|\\
&  \leq\Vert f\Vert_{\alpha,\,\Omega}\left(  d_{X}(\pi(z),\pi(w))^{\alpha}+
|t-s|^{\alpha/2}\right) \\
&  \leq\Vert f\Vert_{\alpha,\,\Omega}\left(  d_{\widehat{X}}(z,w)^{\alpha}+
|t-s|^{\alpha/2}\right)  .
\end{split}
\end{equation}
Gathering \eqref{eq.hatfbd} and \eqref{eq.Holderhatf}, we immediately obtain \eqref{eq.normhatfHolder}.
\end{proof}

Thanks to Lemmas \ref{lem.regullift} and \ref{lem.liftHolder} we can now
deduce the announced scale invariant Harnack inequality for $\mathcal{H}$ from
the analogous result holding in Carnot groups.

\begin{proof}
[Proof of Theorem \ref{thm.Harnack}]Let $(t_{0},x_{0})\in\mathbb{R}%
^{1+n},\,r\in(0,r_{0}]$ and $u$ be as in the statement of the theorem.
Denoting by $\pi:\mathbb{R}^{N}\rightarrow\mathbb{R}^{n}$ the projection of
$\mathbb{R}^{N}$ onto $\mathbb{R}^{n}$, we set
\[
v:[t_{0}-r^{2},t_{0}]\times\overline{B_{X}(x_{0},r)} \times\mathbb{R}%
^{p}\rightarrow\mathbb{R},\qquad v(t,z):=u(t,\pi(z)).
\]
Moreover, we consider the variable coefficient operator
\[
\widehat{\mathcal{H}}:=\textstyle\sum_{i,j=1}^{m}\widehat{a}_{i,j}(t,z)
\widehat{X}_{i}\widehat{X}_{j}-\partial_{t},
\]
where $\widehat{A}(t,z):=(\widehat{a}_{i,j}(t,z))_{i,j=1}^{m}$ is the $m\times
m$ matrix of functions defined as
\[
\widehat{a}_{i,j}(t,z):=a_{i,j}(t,\pi(z))\qquad(\text{for $(t,z)\in
\mathbb{R}^{1+N}$}).
\]
Using Lemma \ref{lem.liftHolder}, and taking into account properties (i)-(ii)
of the matrix $A(t,x)$, we deduce that $\widehat{A}(t,z)$ satisfies the
following analogous properties:

\begin{itemize}
\item[(a)] $\widehat{a}_{i,j} \in{C}_{\widehat{X}}^{\alpha}(\mathbb{R}^{1+N})$
for every $1\leq i,j\leq m$;

\item[(b)] for every $(t,z)\in\mathbb{R}^{1+N}$ one has $\widehat{A}%
(t,z)\in\mathcal{M}_{\Lambda}$.
\end{itemize}

On the other hand, setting $z_{0}:=(x_{0},0)$, by Remark
\ref{rem.liftingdXdhatX} we have
\[
\pi\left(  B_{\widehat{X}}(z_{0},r)\right)  =B_{X}(x_{0},\rho)\quad
\text{and}\quad\pi\left(  \overline{B_{\widehat{X}}(z_{0},r)}\right)
\subseteq\overline{B_{X}(x_{0},r)},
\]
as a consequence, from Lemma \ref{lem.regullift} we deduce the following facts:

\begin{itemize}
\item[(c)] $v\in\mathfrak{C}_{\widehat{X}}^{2}\left(  (t_{0}-r^{2}%
,t_{0})\times B_{\widehat{X}}(z_{0},r)\right)  \cap C\left(  [t_{0}%
-r^{2},t_{0}]\times\overline{B_{\widehat{X}}(z_{0},r)}\right)  $;
\vspace*{0.15cm}

\item[(d)] for every $(t,z)\in(t_{0}-r^{2},t_{0})\times B_{\widehat{X}}%
(z_{0},r)$, one has
\begin{align*}
\widehat{\mathcal{H}}v(t,z)  &  =\sum_{i,j=1}^{m}\widehat{a}_{i,j}
(t,z)\widehat{X}_{i}\widehat{X}_{j}v(t,z)-\partial_{t}v(t,z)\\[0.08cm]
&  =\sum_{i,j=1}^{m}a_{i,j}(t,\pi(z))(X_{i}X_{j}u)(t,\pi(z))-(\partial_{t}u)
(t,\pi(z))\\[0.08cm]
&  =(\mathcal{H}u)(t,\pi(z))=0;
\end{align*}

\item[(e)] $v\geq0$ on $(t_{0}-r^{2},t_{0})\times B_{\widehat{X}}(z_{0},r)$.
\end{itemize}

Gathering (a)-(e), and reminding that the $\widehat{X}_{i}$'s are
Lie-generators of the Lie algebra of $\mathbb{G}$, we can apply
\cite[Thm.\,1.1]{BoUg}: there exists a constant $M>0$, only depending on
$h_{1},h_{2},\gamma$ and $r_{0}$, such that
\begin{equation}
\label{eq.HarnacksuG}\sup_{(t_{0}-h_{2}r^{2},\,t_{0}-h_{1}r^{2})\times
B_{\widehat{X}}(z_{0},\gamma r)}v\leq M\,v(t_{0},z_{0}).
\end{equation}
We finally note that, by the very definition of $v$, the above
\eqref{eq.HarnacksuG} is precisely \eqref{eq.HarnackH}. This completes the proof.
\end{proof}

Starting from Theorem \ref{thm.Harnack}, we immediately obtain (in a standard
way) a scale-in\-va\-riant Harnack inequality for the `stationary' operator
\[
\mathcal{L}:=\sum_{i,j=1}^{m}c_{i,j}(x)X_{i}X_{j}.
\]
In order to clearly state this result, we first need to introduce the
`stationary' coun\-ter\-parts of the spaces $\mathfrak{C}^{2}_{X}$ and
$C^{2,\alpha}_{X}$. We begin with the following

\begin{definition}
\label{def.spaceCX2stat} Let $U\subseteq\mathbb{R}^{n}$ be an open set. We
define $C_{X}^{2}(U)$ as the space of functions $u:U\rightarrow\mathbb{R}$
satisfying the following properties:

\begin{enumerate}
\item $u$ is continuous on $U$;

\item $u$ has continuous intrinsic-derivatives along the $X_{i}$'s at every
point of $U$;

\item $u$ for every fixed $1\leq i\leq m$, the function $X_{i}u$ has
continuous intrinsic-de\-ri\-va\-tive along the $X_{j}$'s at every point of
$U$.
\end{enumerate}
\end{definition}

We then introduce the `stationary' H\"older spaces.

\begin{definition}
\label{def.Holderstat} Let $U\subseteq\mathbb{R}^{n}$ be an open set, and let
$\alpha\in(0,1)$ be fixed. We define $C_{X}^{\alpha}(U)$ as the space of all
functions $u:U\rightarrow\mathbb{R}$ such that
\[
\sup_{U}|u|+\sup_{x\neq y\in U}\frac{|u(x)-u(y)|}{d_{X}(x,y)^{\alpha}} <
\infty.
\]

\end{definition}

With Definitions \ref{def.spaceCX2stat} and \ref{def.Holderstat} at hand, we
immediately derive the stationary coun\-ter\-part of Theorem \ref{thm.Harnack}.

\begin{theorem}
[Stationary Harnack inequality]\label{thm.HarnackStat} Let $X=\{X_{1}%
,\ldots,X_{m}\}\subseteq\mathcal{X}(\mathbb{R}^{n})$ be a family of smooth
vector fields satisfying \emph{(H.1)}-\emph{(H.2}). Moreover, let
\[
C(x)=(c_{i,j}(x))_{i,j=1}^{m}%
\]
be a $m\times m$ matrix of functions such that

\begin{itemize}
\item[(i)] $c_{i,j}\in C_{X}^{\alpha}(\mathbb{R}^{n})$ for every
$i,j=1,\ldots,m$; \vspace*{0.05cm}

\item[(ii)] there exists $\Lambda\geq1$ such that $C(x)\in\mathcal{M}%
_{\Lambda}$ for every $x\in\mathbb{R}^{1+n}$;
\end{itemize}

and let $\mathcal{L}$ be the variable coefficient operator defined as
\[
\mathcal{L}=\sum_{i,j=1}^{m}c_{i,j}(x)X_{i}X_{j}.
\]
Finally, let $r_{0}>0$ be arbitrarily fixed. \vspace*{0.08cm}

Then, there exists a constant $\mathbf{c}>0$, only depending on $r_{0}$, such
that, for every $x_{0}\in\mathbb{R}^{n}$, $r\in(0,r_{0}]$ and $u\in{C}_{X}%
^{2}(B_{X}(x_{0},3r))$ satisfying
\[
\text{$\mathcal{L}u=0$ and $u\geq0$ on $B(x_{0},3r)$},
\]
we have the following inequality
\[
\sup_{B_{X}(x_{0},r)}u\leq\mathbf{c}\,\inf_{B(x_{0},r)}u.
\]

\end{theorem}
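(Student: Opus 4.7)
\emph{The plan} is to reduce the stationary Harnack inequality to its parabolic counterpart (Theorem~\ref{thm.Harnack}) by viewing an $\mathcal{L}$-harmonic function as a time-independent solution of a parabolic equation, and then to chain a resulting local ``mean-value'' bound along sub-unit curves. Given $u$ as in the statement, I would set $v(t,x):=u(x)$ and $a_{i,j}(t,x):=c_{i,j}(x)$. Then $a_{i,j}\in{C}_{X}^{\alpha}(\mathbb{R}^{1+n})$ with the same H\"older norm as $c_{i,j}$, and $A(t,x)=C(x)\in\mathcal{M}_{\Lambda}$ for every $(t,x)$. On any space-time cylinder lying over $B_X(x_0,3r)$ the function $v$ belongs to $\mathfrak{C}_{X}^{2}$, satisfies $\partial_{t}v\equiv 0$, $\mathcal{H}v=\mathcal{L}u=0$, and $v\geq 0$; hence the hypotheses of Theorem~\ref{thm.Harnack} are fulfilled for $v$ on any such cylinder.

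\emph{Local mean-value inequality.} Fix once and for all parameters, say $(h_1,h_2,\gamma)=(\tfrac{1}{2},\tfrac{3}{4},\tfrac{1}{2})$. For any $t^{\ast}\in\mathbb{R}$, $y\in\mathbb{R}^{n}$ and $\rho\in(0,r_0]$ such that $B_X(y,\rho)\subseteq B_X(x_0,3r)$, Theorem~\ref{thm.Harnack} applied to $v$ on the cylinder $(t^{\ast}-\rho^{2},t^{\ast})\times B_X(y,\rho)$ collapses---by the $t$-independence of $v$---to
\[
\sup_{B_X(y,\rho/2)}u \;\leq\; M\,u(y),
\]
where $M=M(r_0)$ does not depend on $y,\rho,t^{\ast}$.

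\emph{Chaining.} Let $x_1,x_2\in B_X(x_0,r)$. For $\epsilon>0$ small, choose sub-unit curves $\sigma_i$ connecting $x_0$ to $x_i$ of length $L_i\leq r+\epsilon<2r$ ($i=1,2$); any point of $\sigma_i$ then lies at $d_X$-distance at most $r+\epsilon$ from $x_0$, and consequently its $d_X$-ball of radius $r$ is contained in $B_X(x_0,3r)$. Along each $\sigma_i$ I would pick a finite chain $y_0=x_0,y_1,\ldots,y_{N_i}=x_i$ with $d_X(y_j,y_{j+1})\leq r/4$; since $L_i<2r$, the integer $N_i$ may be taken to be a universal constant, independent of $r$ and $\epsilon$. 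Applying the local mean-value inequality at each $y_j$ with $\rho=r$ gives both $u(y_{j+1})\leq M\,u(y_j)$ and---by the symmetry of $d_X$, which ensures that also $y_j\in B_X(y_{j+1},r/2)$---the reverse bound $u(y_j)\leq M\,u(y_{j+1})$. Iterating along $\sigma_1$ and $\sigma_2$ yields $u(x_0)\leq M^{N_1}u(x_1)$ and $u(x_2)\leq M^{N_2}u(x_0)$, whence $u(x_2)\leq M^{N_1+N_2}u(x_1)$. Taking the supremum over $x_2$ and the infimum over $x_1$ in $B_X(x_0,r)$ gives the desired inequality with $\mathbf{c}:=M^{N_1+N_2}$ depending only on $r_0$.

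\emph{Main obstacle.} The only non-routine point in the chaining is that the number $N_i$ of points needed along $\sigma_i$ is \emph{uniformly bounded} in $r$; this rests on the fact that $d_X$ is a length metric, so distances are realized (up to $\epsilon$) by sub-unit curves whose arc length is comparable to the distance, and the ratio $L_i/(r/4)$ is then a pure number. Everything else, including the applicability of Theorem~\ref{thm.Harnack} to the lifted function $v$ and the verification of non-negativity on each small ball $B_X(y_j,r)$, is a direct check from the $d_X$-geometry established in the paper.
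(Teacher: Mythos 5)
Your proof is correct and implements precisely the ``standard way'' the paper alludes to when stating that Theorem \ref{thm.HarnackStat} follows immediately from Theorem \ref{thm.Harnack}: one views the $\mathcal{L}$-harmonic function as a time-independent $\mathcal{H}$-harmonic function (checking that $v(t,x)=u(x)$ indeed lies in $\mathfrak{C}_X^2$ and that the frozen-in-time coefficients are $C_X^\alpha$ on $\mathbb{R}^{1+n}$ and uniformly elliptic), extracts a spatial mean-value inequality $\sup_{B_X(y,r/2)}u\leq M\,u(y)$ from the parabolic Harnack, and then chains a uniformly bounded number of such estimates along sub-unit curves inside $B_X(x_0,3r)$ — using that $d_X$ is a length metric, so the chain length is a pure number — to pass from the supremum to the infimum over $B_X(x_0,r)$. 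Since the paper gives no explicit argument, there is no alternative route to contrast with; yours is the expected one, and the details (containment of the small balls in $B_X(x_0,3r)$, uniform bound on the number of chain links, symmetry of $d_X$ giving the two-sided comparison at each step) are handled correctly.
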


\end{document}